\theoremstyle{plain}
\newtheorem{theorem}{Theorem}
\newtheorem{proposition}[theorem]{Proposition}
\newtheorem*{proposition*}{Proposition}
\newtheorem{corollary}[theorem]{Corollary}
\newtheorem*{corollary*}{Corollary}
\newtheorem{lemma}[theorem]{Lemma}
\newtheorem*{theorem*}{Theorem}
\newtheorem*{lemma*}{Lemma}
\newtheorem*{conjecture*}{Conjecture}
\newtheorem*{question*}{Question}
\newtheorem*{problem*}{Problem}
\theoremstyle{definition}
\newtheorem*{exercise*}{Exercise}
\theoremstyle{remark}
\newtheorem{remark}[theorem]{Remark}
\newtheorem*{remark*}{Remark}
\newtheorem{remsTh}[theorem]{Remarks}
\newcommand{\subclass}[1]{}
\newcommand{\enumTi}[1]{\renewcommand{\theenumi}{#1}}
\newcommand{\alphenumi}{\enumTi{\alph{enumi}}}
\newcommand{\romenumi}{\enumTi{\roman{enumi}}}
\newcommand{\aand}{\mathrel\&}
\newcommand{\gecmt}[1]{\mathrel{\mathop\ge\limits_{#1}}}
\newcommand{\lt}{\left}
\newcommand{\rt}{\right}
\newcommand{\abs}[1]{{\lt\lvert{#1}\rt\rvert}}
\newcommand{\sabs}[1]{{\lvert{#1}\rvert}}
\newcommand{\widebar}[1]{\overline{#1}}
\newcommand{\nfrac}[2]{{\nicefrac{#1}{#2}}}
\newcommand{\RR}{\mathbb{R}}
\DeclareMathOperator{\Prb}{\mathbf{P}}
\DeclareMathOperator{\Exp}{\mathbf{E}}
\DeclareMathOperator{\IndicatorOp}{\mathbf{I}}
\newcommand{\Ind}{\IndicatorOp}
\newcommand{\eps}{\varepsilon}
\newlength{\algotabbingwidth}
\def\lor{\vee}
\def\land{\wedge}
\def\TRUE{{{\textsc{\small True}}}}
\def\FALSE{{{\textsc{\small False}}}}
\DeclareMathOperator{\Bin}{Bin}
\newcommand{\Hist}{\mathscr H}
\newcommand{\Histt}{\Hist\mspace{-2mu}(t)}
\DeclareMathOperator{\polylog}{polylog}
\newcommand{\aclause}{\texttt{C}}
\newcommand{\aliteral}{\texttt{L}}
\newcommand{\anatom}{\aliteral}
\newcommand{\avar}{\texttt{x}}
\newcommand{\anothervar}{\texttt{y}}
\newcommand{\anI}{\texttt{I}} 
\newcommand{\aJ}{\texttt{J}}  
\newcommand{\anS}{\texttt{S}}  
\begin{document}

\title[Random 3-iSAT]{An algorithm for random signed 3-SAT with Intervals}


\author{Kathrin Ballerstein}%
\address{Kathrin Ballerstein: Institute for Operations Research\\
Department of Mathematics\\
ETH Zurich\\
R\"amistrasse 101\\
8092 Zurich\\
Switzerland}%
\email{kathrin.ballerstein@ifor.math.ethz.ch}%

\author{Dirk Oliver Theis}
\address{Dirk Oliver Theis:
  University of Tartu\\
  Institute of Computer Science\\
  J.~Liivi 2\\
  50409~Tartu\\
  Estonia}%
\curraddr{}%
\email{dirk.oliver.theis@ut.ee \tiny\href{http://dirkolivertheis.blogspot.com}{http://dirkolivertheis.blogspot.com}}%

\subjclass[2000]{Primary XXXXX}

\date{Wed Aug 14 19:25:32 EEST 2013}




\begin{abstract}
  Interval-$k$-SAT ($k$-iSAT) is a generalization of classical $k$-SAT where the variables can take values
  in $[0,1]$ (instead of $\{0,1\}$) and the literals are of the form $x\in I$, for intervals
  $I\subset[0,1]$.  It falls within the class of signed satisfyability problems.

  We propose an algorithm for $3$-iSAT, and analyze it on uniformly random formulas.  The algorithm follows
  the Unit Clause paradigm, enhanced by a (very limited) backtracking option.  Using Wormald's ODE method,
  we prove that, if $m/n \le 2.3$, with high probability, our algorithm succeeds in finding an assignment
  of values to the variables satisfying the formula.
  \\
  \textbf{Keywords:} Random Constraint Satisfaction Problems, signed Satisfiability.
\end{abstract}


\maketitle



\section{Introduction}

Let $M$ be a (usually finite) set, $\mathcal S$ a set of subsets of $M$, and $X$ a set of variables. A \textit{(signed) literal} is the pair $(\avar,\anS)\in X\times
\mathcal S$, which we will denote as $\avar \in \anS$, and for a positive integer $k$, a \textit{$k$-clause} (or simply clause) is the disjunction ($\lor$) of at
most $k$ literals. The conjunction ($\land$) of finitely many $k$-clauses is called the \textit{signed $k$ conjunctive normal form ($k$-CNF)}. In this setting the
central question is the \textit{signed $k$-satisfiability problem}, or
\textit{signed $k$-SAT}, which asks for a satisfying \textit{interpretation}, that is, an assignment of values to
the variables such that in each clause there is at least one literal $(\avar,\anS)$ for which $\avar$ takes a value in $\anS$.

This setting includes as a special case the classical satisfiability (SAT) problem.  There, one chooses for $M$ the 2-element set $\{\TRUE,\FALSE\}$ and $\mathcal
S=\{\{\TRUE\}, \{\FALSE\}\}$. In case $M$ is an ordered set (a chain) and the set $\mathcal S$ is the set of all intervals in $M$, we speak of \textit{Interval
  SAT}, or \textit{iSAT}.  In our contribution, we set $M:=[0,1]$, because this includes all iSAT settings with finite $M$. In particular, we consider formulas of
the type
\begin{align*}
  \bigwedge_{i=1}^t \ \bigvee_{j\in \mathcal J_i} \avar_j \in \texttt{I}_j^i,
\end{align*}
where, for all $i=1,\ldots,t$, $\mathcal J_i$, with $|\mathcal J_i|\leq 3$, is an index set of variables in $X$, and $\texttt{I}_j^i \subseteq [0,1]$ are intervals
for all $i$ and $j$. Then, an interpretation of a clause $i$ is satisfying if there is a variable $\avar_j$ taking a value in the interval
$\texttt{I}_j^i$. Identifying a satisfying interpretation of the complete 3-CNF is related to the study of random interval
graphs~\cite{Scheinerman88,JusticzScheinermanWinkler90}. Our notation and terminology on signed SAT follows~\cite{ChepoiCreignouHermannSalzer10}.

Signed SAT problems originated in the area of so-called multi-valued logic~\cite{lukasiewicz:20}, where variables can take a (usually finite) number of so-called
\textit{truth values}, not just $\TRUE$ or $\FALSE$.  Work on signed CNF formulas started in earnest with the work of H\"ahnle and Many\`a and their
coauthors.  We refer the reader to the survey paper~\cite{BeckertHaehnleManya99}, and the references therein.

The motivation for studying signed formulas was to extend algorithmic techniques developed for deductive systems in multi-valued logic to better cover practical
applications~\cite{haehnle:91}.  Indeed, on the one hand, a number of papers show how combinatorial problems can be solved using signed SAT
algorithms~\cite{BejarManya99,BejafCabiscolFernandezManyaGomes,FrischPeugniez01,BejarManyaCabiscolFernandezGomes07}; on the other hand, a large number
of heuristic and exact algorithms have been studied (see~\cite{AnsoteguiManya03, britz04} and the references therein), and a number of polynomially solvable
subclasses of signed SAT have been
identified~\cite{EscaladaimasManya94,BeckertHaehnleManya99,Manya00,BeckertHahnleManya00,AnsoteguiBejarCabiscolManya04,AnsoteguiManya03,ChepoiCreignouHermannSalzer10}.
While in the works of Many{\`a} and his collaborators, order-theoretic properties of the ground set $M$ are exploited to make conclusions on the complexity of signed
SAT, Chepoi et al.~\cite{ChepoiCreignouHermannSalzer10} completely settle the complexity question in the general case by reverting to combinatorial properties of the
set system $\mathcal S$.  In particular, they prove that:
signed $k$-SAT, $k\ge 3$, is polynomial, if $\bigcap_{S\in\mathcal S} S \ne \emptyset$ and NP-complete otherwise;
signed 2-SAT is polynomial if, and only if, $\mathcal S$ has the Helly property (if no two sets in a subfamily are disjoint, then the subfamily has non-empty
intersection), and NP-complete otherwise.

For the case when $\mathcal S$ has the Helly property, Chepoi et al.~give a non-satisfiability certificate for signed 2-SAT in the spirit of Aspvall, Plass, and
Tarjan's famous result for classical 2-SAT~\cite{AspvallPlassTarjan79}.

Most applications and a great deal of the earlier complexity results~\cite{BeckertHaehnleManya99} focus on \textit{regular} signed SAT, where $M$ is a poset, and the
formulas may only involve sets of the form $S = \{j\mid j\ge i\}$ or $S = \{j\mid j\le i\}$.  Regular iSAT (or just regular SAT) is regular signed SAT for posets $M$
which are chains.

In particular, for regular iSAT, random formulas have been investigated from a heuristic point of view.
Many\`a et al.~\cite{ManyaBejarEscaladaimaz98} study uniformly generated random regular 3-iSAT instances,
and observe a phase transition similar to that observed in classical SAT (see~\cite{AchlioptasPeres2004}
and the references therein): (i) the most computationally difficult instances tend to be found near the
threshold, (ii) there is a sharp transition from satisfiable to unsatisfiable instances at the threshold
and (iii) the value of the threshold increases as the number of truth values considered increases.  Their
results are confirmed and extended by further papers exploring uniformly random regular 3-iSAT
instances~\cite{BejarManya99phase,BeckertHaehnleManya99,BejarManyaCabiscolFernandezGomes07}.

Further, in~\cite{BejarManya99phase,BejarManyaCabiscolFernandezGomes07} a bound on the ratio $\nfrac{m}{n}$ is given, beyond which a random formula is with high
probability (whp) unsatisfiable.  To our knowledge, however, ours is the first rigorous analysis of an algorithm for random signed SAT.

\paragraph{Our interest} in the particular version of signed SAT arises from applications in computational systems biology, where iSAT yields a generalization of
modeling with Boolean networks~\cite{kauffman:69},
where biological systems are represented by logical formulas with variables corresponding to biological components like proteins.  Reactions are modeled as logical
conditions which have to hold simultaneously, and then transferred into CNF.  The model is widely used by practitioners (see e.g.~\cite{downward:2001,
  klamt-saezr-lindquist-simeoni-gilles:2006, haus-niermann-truemper-weismantel:2009} and the references therein). Often, though, this binary approach is not
sufficient to model real life behavior or even accommodate all known data.  Due to new measurement techniques, a typical situation is that an experiment yields
several ``activation levels'' of a component.  Thus, one wants to make statements of the form: If the quantity of component $A$ reaches a certain threshold but does
not exceed another, and component $B$ occurs in sufficient quantity, then another component $C$ is in a certain frame of activation levels.  The collection of such
rules accurately models the global behavior of the system.  We refer to~\cite{KBallersteinPhD} for details of models and applications.


\paragraph{In this paper}
we present and analyze an algorithm which solves uniformly random 3-iSAT instances with high probability,
provided that the ratio between the number $m$ of clauses and the number $n$ of variables is at most
\mbox{2.3}.  Our algorithm is an adaption of the well-known Unit Clause algorithm from classical
SAT~\cite{ChaoFranco86,Achlioptas01}, where, in an inner loop, 1-clauses are treated if any exist, and in
an outer loop, a variable is chosen freely and assigned some value.
This Unit Clause approach is enhanced with a ``repair'' subroutine (a very simple backtracking mechanism).

The algorithm in~\cite{FriezeSuen96} is currently the best known algorithm that succeeds with high
probability, although other algorithms (e.g., \cite{KaporisKirousisLalas06,HajiaghayiSorkin03}) can be
outfitted with a backtracking routine to provide better results.  See also~\cite{CojaOghlan10} for general
$k\to\infty$.

Unlike the algorithms in~\cite{Achlioptas00,AchlioptasSorkin00,KaporisKirousisLalas06}, we prove that our
algorithm succeeds with high probability.  To obtain a whp result, the ``repair'' subroutine is essential,
cf.~e.g., \cite{FriezeSuen96}, where the range in which their algorithms succeed increase dramatically,
once such a routine part is added.  As for our algorithm, without such a repair function, it would not
succeed whp, if the ratio $m/n$ is larger than the point where 2-iSAT formulas become satisfiable almost
surely.  This mirrors the situation in classical $3$-SAT~\cite{Achlioptas01} (and can also be derived from
our analysis).

In the case of iSAT, the repair mechanism needs to be considerably more subtle than the one
in~\cite{FriezeSuen96} for classical 3-SAT.

In the analysis of the algorithm, we use Wormald's differential equations method~\cite{Wormald95}.  ODE
methods have been used for the analysis of algorithms for classical SAT with great
success~\cite{ChaoFranco86,ChaoFranco90,FriezeSuen96,Achlioptas00,AchlioptasSorkin00}.
In our analysis, we combine the idea of Achlioptas and Sorkin~\cite{AchlioptasSorkin00} to consider as a
time step an iteration of the outer loop, but we use Wormald's theorem~\cite{Wormald-LecNot-99} where they
use a Markov-chain based approach.  The analysis of the inner loop requires to study the first busy period
of a certain stable server system~\cite{Achlioptas00,Achlioptas01}, or, in our case, more accurately, the
total population size in a type of branching process.  The value \mbox{2.3} arises from the numerical
solution to an initial value problem (IVP).

At this point, it makes sense to point to the fact that while, in general, backtracking destroys uniform
randomness of the formula, which is problematic for the analysis.  In our analysis, (1) the repair
involves only a very small part of the formula---what remains of the formula is still uniformly
distributed---so that (2) a more careful analysis is only needed for what happens in the repaired part of
the formula itself.

Extending the results for $k$-iSAT for $k\ge 4$ is conceptually easy; we briefly discuss it in the
conclusions.

\paragraph{The outline of the paper is as follows:}
In the next section, we present our algorithm for random 3-iSAT in detail.  In
Section~\ref{sec:cmprndint}, we prove some facts about uniformly at random chosen sub-intervals of
$[0,1]$.  In Section~\ref{sec:2isat} we take a brief excursion to random 2-iSAT as our algorithm for
3-iSAT ultimately relies on solving a 2-iSAT instance.  In Section~\ref{sec:Q}, we compile the required
facts about total population sizes of a kind of branching system, which are then applied in
Section~\ref{sec:inner} to the study of the inner loop of our algorithm.  Finally, in
Section~\ref{sec:outer}, we prove the whp result for our algorithm.  We raise some issues for future
research in the final section.  Several technical arguments have been moved into the appendix.

\paragraph{Throughout the paper,} 
we hide absolute constants in the big-$O$-notation.  If the constant depends on other parameters, we make this clear by adding an index, e.g.,~$O_\eps(\cdot)$.  As
customary, we use the abbreviation iid for ``independent and identically distributed'' and uar for ``uniformly at random''.  Whp and wpp are to be understood for
$n\to\infty$, with $m=m(n)$ depending on $n$.


\section{An algorithm for random 3-iSAT}\label{sec:3-isat}

In this section, we describe an algorithm which finds a satisfying interpretation if the number of clauses is $m = cn$ with $c \le 2.3$.

\subsection{The random model; exposure}

For our random model, we assume that each 3-clause consists of three distinct variables. We choose a formula uar from the set of all possible classical 3-CNF
formulas on $n$ variables with $m$ 3-clauses, each containing three distinct variables. Then, we choose an interval for each literal uar from the subintervals of
$[0,1]$: We select uar two points $x$ and $y$ from $[0,1]$ and determine the interval as $[a,b]$ with $a=\min\{x,y\}$ and $b=\max\{x,y\}$. In this context, note
that due to Scheinerman~\cite{Scheinerman88} the endpoints $x$ and $y$ can be arbitrary reals. In fact, he proves that this strategy is equivalent to choosing $2l$
endpoints for $l$ intervals uar from the finite set $\{1,\ldots,2l\}$ without repetition as the probability that all chosen endpoints from $[0,1]$ are distinct is
$1$. For the distribution of a random interval $[a,b]$ chosen as $a=\min\{x,y\}$ and $b=\max\{x,y\}$ for $x,y\in[0,1]$ uar, we find with $u,v\in [0,1]$
\begin{align*}
  \Prb ([a,b]\subseteq [u,v])= 2\cdot \Prb (a\geq u,\ b\leq v) = 2\cdot (1-u)\cdot v.
\end{align*}

 As is customary in the context of random SAT, we use the language of ``exposing'' literals.  Intuitively, the idea is that the information
about each literal is written on a card which lies face down, until the information is exposed.  Clearly, the unexposed part of the formula is uar conditioned on
which literals have been exposed and which have not. We refer to the elegant description in Achlioptas' paper~\cite{Achlioptas01}.

\subsection{Brief description of the algorithm}

The basic framework of our algorithm is the same as for most algorithms for classical $k$-SAT.  A formerly unused variable is selected, and a value is assigned to
it.  Then, clauses containing the variable are updated: if the literal of the clause involving the variable is satisfied, the clause is deleted; otherwise the
literal is deleted from the clause, leaving a shorter clause.  The variable is removed from the set of \textit{unused variables,} and declared a \textit{used
  variable.}  The algorithm fails if, and only if, it creates an empty clause.

However, to a certain extent, our algorithm is able to repair bad choices it has made.  Thus, it occasionally only assigns \textit{tentative} values to variables.
As long as it is not certain that a variable keeps its tentative value, no deletions of clauses or literals from clauses are performed.  Instead, we assign colors
to the clauses, which code the number of satisfied, unsatisfied, and unexposed literals they contain.  The meaning of the colors will be explained in
Table~\ref{tab:colors} but at this point it suffices to know that red clauses correspond to unexposed 1-clauses, i.e., clauses with one unexposed literal and the
variables in any other literal of the clause have tentative values which render the literals false.

As said before, the basic approach is that of the Unit-Clause algorithm.  The \textit{outer loop} of the algorithm will maintain the property that there is no
1-clause.  In each iteration of the outer loop, a variable is selected uar from the set of unused variables.  Such a variable selected in the outer loop is
referred to as a \textit{free variable.}  The \textit{inner loop} is initialized by assigning a tentative value to this free variable, and then repeats as long as
there are red clauses.  In each iteration of the inner loop, a red clause is selected and \textit{serviced:} the variable contained in the clause (the
\textit{current variable} of the iteration) is tentatively set to some value in such a manner that the serviced red clause becomes true.  We refer to the variables
selected in the inner loop as \textit{constrained variables.}

If, during a run of the inner loop, a situation is reached in which it is probable that an empty clause will be created, it backtracks.  This happens when the
following \textit{fatality} is suffered: The current variable occurs in another red clause, other than the one serviced.  If that happens, there is a $\nfrac13$
probability that the two intervals occurring in the two red clauses are disjoint~\cite{Scheinerman88}, so that creating an empty clause is inevitable.

For this situation, the inner loop maintains a rooted tree $G$ of decisions it has taken so far. The nodes of the tree correspond to variables to which tentative
values have been assigned and those which occur in the unexposed part of red or blue $2$-clauses. The root of the tree is the free variable with which the run of
the inner loop was initialized. The edges correspond to $2$-clauses. For every $2$-clause in which the current variable of an iteration occurs, the unexposed
variable is added as a node and an edge is added connecting the current variable with this new node. Doing so in every iteration constructs a tree. If the current
variable of an iteration occurs in two red clauses, then this implies that a cycle is closed in $G$, because there must exist two paths from $\avar_0$ to the
current variable. The tree $G$ is in detail defined in the algorithm. If a fatality occurs, the values of the variables along the paths from the root to the
serviced literal are changed so that all $2$-clauses along the path are fulfilled and only one red clause remains which is satisfied.  Then, all other tentative
values are made permanent, and the inner loop is restarted with the new formula, but this time without a free variable in the initialization.  We call
\textit{Phase~I} the run of the inner loop before a repair occurs (or if no repair occurs), and as \textit{Phase~II} to the run of the inner loop after a repair
has been performed.  In Phase~II, no further repair is attempted.  Instead, if fatalities occur, the inner loop just moves on (without repair).  In Phase~I, if a
fatality occurs, there's the possibility that a repair is not possible.  In this case, too, the inner loop just moves on without repair.  In order to be able to
\newcommand{\hurz}{``raise a flag''}%
\newcommand{\hurzes}{``raises a flag''}%
\newcommand{\hurzed}{``raised a flag''}%
refer to these situations in the proofs, we indicate these positions in the code by the pseudo-command \hurz.

After all red clauses have been dealt with in either Phase~I or Phase~II, the tentative values are made permanent, and control is returned to the outer loop, which
selects another free variable, and so on.

The outer loop terminates, if the number of 2-clauses plus the number of 3-clauses drops below a certain factor $c'$ of the number of unused variables.  Then, it
deletes an arbitrary literal from every 3-clause and invokes the exact polynomial algorithm by Chepoi et al.~\cite{ChepoiCreignouHermannSalzer10} to decide whether
the resulting 2-iSAT formula has a satisfying interpretation.  We will prove in Section~\ref{sec:2isat} that this is always the case if the ratio of the number of
resulting 2-clauses over the number of unused variables is below $\frac32$.

The complete algorithm is shown below as Algorithm~\ref{algo:3isat:outer} (the outer loop), Algorithm~\ref{algo:3isat:inner} (the inner loop), and
Algorithm~\ref{algo:3isat:repair} (the repair procedure).
Throughout the course of the algorithm, for $i=0,1,2,3$, we denote by $Y_i(t)$ the number of $i$-clauses, and by $X(t)$ the number of unused variables,
respectively, at the beginning of iteration $t$ of the outer loop.  Moreover, for an interval $I$, we denote by
\begin{equation}\label{eq:3isat:def-bar-x}
  \bar x(I) := \mbox{argmin}_{x\in I} \,\sabs{x-\nfrac12}
\end{equation}
the point in $I$ which is closest to $\nfrac12$.
We refer to the variable $\avar_j$ which is selected in iteration~$j$ of the inner loop as the \textit{current variable} of that iteration.

Below, we will prove the following fact.

\begin{lemma}\label{lem:3isat:failure}
  A single run of Algorithm~\ref{algo:3isat:inner} (including a possible repair and consequent Phase~II) produces an empty clause, only if it \hurzes.
\end{lemma}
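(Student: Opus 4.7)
The plan is to trace through a single run of the inner loop and show that, apart from the explicitly indicated \hurz\ points, no action of the algorithm can shrink a clause to the empty clause. The only primitive operation that can shrink a clause is the assignment of a tentative value to a variable, so I would begin by characterising when such an assignment creates an empty clause: it must falsify the unique unexposed literal of a clause whose other literals are already false, i.e.\ the unexposed literal of a \emph{red} clause. Because servicing a red clause chooses the current variable's value precisely so as to make the unexposed literal of that serviced clause true, the serviced clause itself never becomes empty. An empty clause can therefore be created only when the current variable occurs in \emph{another} red clause at the moment of servicing, which is precisely the definition of a fatality.

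Having reduced the problem to fatalities, I would split into cases matching the code. If no fatality occurs during servicing, no empty clause is produced and there is nothing to show. If a fatality occurs in Phase~II, the inner loop \hurzes\ by the description of Phase~II, so the lemma is immediate. The only substantive case is a fatality in Phase~I: either the repair procedure of Algorithm~\ref{algo:3isat:repair} succeeds without raising a flag, or it \hurzes; I need to show that in the former case no empty clause is produced.

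For that substantive case, I would exploit the tree structure of $G$. Before the fatality, $G$ is a tree, so every constrained variable has a unique path of $2$-clauses back to the root $\avar_0$; the moment the current variable is found in a second red clause, the closing of the cycle produces two paths from $\avar_0$ to that variable, meeting at a lowest common ancestor. The repair reassigns the tentative values along these two paths so that every edge-$2$-clause remains satisfied and the single red clause at the bottom becomes satisfied. I would verify by induction along each path that the reassignment propagates without falsifying any clause, using the invariant that when a constrained variable was first added to $G$, the only coloured clause containing it was the $2$-clause that became its parent edge; all further incidences of that variable are with still-uncoloured $3$-clauses, which cannot be turned empty by a single assignment. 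The repair \hurzes\ exactly in those configurations where the intervals along the two paths cannot be made compatible at the common ancestor; in all other configurations the reassignment succeeds.

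The main obstacle I anticipate is the bookkeeping of clauses incident to the path variables that are \emph{not} tree-edges of $G$. I would need a clean invariant, maintained throughout the inner loop, guaranteeing that every red clause corresponds to a leaf of $G$ and that every $2$-clause incident to a path variable is either itself a tree-edge of $G$ or contains a still-unused, hence unexposed, variable. Only with such an invariant in hand can one conclude that flipping a tentative value along the path cannot silently spawn an empty clause outside $G$, so carefully establishing this invariant (and checking that neither the normal servicing step nor the repair step violates it) is where the bulk of the argument will lie.
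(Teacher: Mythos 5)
Your high-level plan is similar in spirit to the paper's --- both proofs zero in on the repair of Algorithm~\ref{algo:3isat:repair} as the only place an empty clause can arise without a flag, and both exploit the structure of $G$ --- but there are several concrete problems.

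First, you misdescribe \emph{where and why} flags are raised. You write that ``the repair raises a flag exactly in those configurations where the intervals along the two paths cannot be made compatible at the common ancestor.'' This is not what happens: Algorithm~\ref{algo:3isat:repair} never raises a flag, and the repair, once invoked, always succeeds in satisfying all path-edge $2$-clauses and $\aclause'$ because it sets each $\avar_j := \bar x(\aJ_j)$, which automatically satisfies the literal $\avar_j\in\aJ_j$ of the next edge, and sets $\avar_k := \bar x(\aJ')$, which satisfies $\aclause'$. There is no ``interval incompatibility'' failure mode. The flags are raised \emph{before} the repair, in steps (\refinnernoparen{algstep:inner:red1cl}.\ref{algstep:inner:hyperedge:obacht})--(\refinnernoparen{algstep:inner:red1cl}.\ref{algstep:inner:double-hit:obacht}), on combinatorial conditions (a red/blue/black $3$-clause, an extra cycle in $G$ or a blue clause through $\avar_j$, three or more red clauses through $\avar_j$). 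The whole point of the proof is to show that if \emph{none of those conditions} holds, then step~\refrepair{algstep:repair:finalize} cannot produce a $0$-clause. Your framing loses sight of these conditions, which are precisely the hypotheses the proof must use.

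Second, the reduction in your opening paragraph (``an empty clause can therefore be created only when the current variable occurs in another red clause at the moment of servicing'') is incomplete as stated: an empty clause can also appear at step~\refrepair{algstep:repair:finalize}, when previously-satisfied (blue) clauses are re-examined after the repair has changed the permanent values of path variables. You do come back to this, but the opening characterisation should already say that the only dangerous step is the finalisation of a repair, because the normal service step by construction satisfies $\aclause_j$ and every other shortening is just a literal deletion in a clause that still has an unexposed literal.

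Third, and most substantively, you leave the real work unfinished. Your proposed invariant (``every red clause corresponds to a leaf of $G$; every $2$-clause incident to a path variable is a tree-edge of $G$ or contains an unused variable'') is not established, and you say yourself that doing so is ``where the bulk of the argument will lie.'' The paper avoids needing this invariant at all: it simply splits into whether the clause that might go empty is a $3$-clause or a $2$-clause, observing that the repair exposes no new literals, so a $3$-clause can only be emptied if it was already red/blue/black at step (\refinnernoparen{algstep:inner:red1cl}.\ref{algstep:inner:hyperedge:obacht}) (contradiction), and a $2$-clause can only be emptied in one of three explicit scenarios, each of which is killed by one of the flag conditions (in the blue case, by the fact that condition~(\refinnernoparen{algstep:inner:red1cl}.\ref{algstep:inner:cycle:obacht}) forbids $\avar_j$ lying in a blue clause). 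This direct case analysis is both shorter and avoids the tree-invariant machinery. If you wish to carry out your approach, you need to (a)~actually prove the invariant is maintained by every step of the inner loop, and (b)~handle coloured $3$-clauses (pink/turquoise) explicitly, not just $2$-clauses; your current sketch doesn't do either.

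A smaller inaccuracy: the repair path is the single path in $G$ from $\avar_0$ to $\aclause_j$; the repair does not ``reassign the tentative values along these two paths.'' The other branch of the closed cycle (through $\aclause'$) is not re-valued; the clause $\aclause'$ is salvaged solely by the choice $\avar_k := \bar x(\aJ')$ of the final variable.
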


The performance of the algorithm on random 3-iSAT instances is analyzed in Sections~\ref{sec:inner} and~\ref{sec:outer}.  There, we will prove the following theorem.

\begin{theorem}\label{thm:algo-works-whp}
  Let $c := 2.3$, and suppose Algorithm~\ref{algo:3isat:outer} is applied to a uniformly random iSAT formula on $n$ variables with $m$ 3-clauses. If $m \le cn$,
  then, whp, Algorithm~\ref{algo:3isat:outer} creates no empty clause, i.e., it finds a satisfying interpretation.
\end{theorem}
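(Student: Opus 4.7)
The plan is to apply Wormald's differential equation method to track the evolution of the triple $(X(t), Y_2(t), Y_3(t))/n$ through the iterations of the outer loop, and to show that the trajectory stays in a safe region until the outer loop hands control over to the 2-iSAT solver of \cite{ChepoiCreignouHermannSalzer10}. First I would argue that at the start of each outer-loop iteration the unexposed part of the formula is still uniform (conditioned on the current clause/variable counts and on which literals have been exposed): the outer loop only touches clauses through exposure, and the repair step modifies only a short path in the tree $G$, so everything outside that path remains unexposed and uniform. Lemma~\ref{lem:3isat:failure} reduces ``no empty clause'' to ``no flag ever raised''.

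Next I would compute the drift of $(X, Y_2, Y_3)$ per outer iteration, which requires analyzing one run of the inner loop conditional on the current densities. The inner loop, viewed through the lens of exposure, behaves like a branching process whose particles are red clauses: servicing a red clause sets a constrained variable, and the number of new red clauses produced is distributed (via the results of Section~\ref{sec:Q}) in terms of the current ratios $y_2/x$ and $y_3/x$, together with the interval-geometry facts of Section~\ref{sec:cmprndint} (in particular $\Prb([a,b]\subseteq[u,v])=2(1-u)v$ and the $\nicefrac13$ disjointness probability). Provided the branching is subcritical -- which is controlled by keeping $y_2/x$ below the $2$-iSAT threshold $\nicefrac32$ established in Section~\ref{sec:2isat} -- the total progeny is $O_\eps(1)$ in expectation, and the expected decrements $\Exp[\Delta X], \Exp[\Delta Y_2], \Exp[\Delta Y_3]$ are Lipschitz functions of the densities. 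Verifying the boundedness and Lipschitz hypotheses of Wormald's theorem~\cite{Wormald-LecNot-99} then yields whp concentration of $(X,Y_2,Y_3)/n$ on the solution $(x(\tau), y_2(\tau), y_3(\tau))$ of the associated IVP with initial data $(1,0,c)$.

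Then I would bound the per-iteration probability of raising a flag. A flag is raised either in Phase~I when a fatality cannot be repaired (the two red clauses sharing the current variable are not both on tree paths from the root $\avar_0$) or in Phase~II when a fatality produces two disjoint intervals. Using the subcritical branching process size together with the random-interval calculations, each of these events has probability $O(1/n)$ per outer iteration as long as the trajectory stays in the safe region. Summing over the $O(n)$ outer iterations gives total failure probability $o(1)$. Finally, the numerical solution of the IVP shows that for $c=2.3$ the quantity $y_2(\tau)/x(\tau)$ stays strictly below $\nicefrac32$ throughout the run, so the outer loop terminates at a state where the residual 2-iSAT instance is handled whp by the arguments of Section~\ref{sec:2isat}; for $c$ much larger than $2.3$ the invariant $y_2/x<\nicefrac32$ is violated and the ODE method breaks down, pinning down the threshold.

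The main obstacle is to make the uniformity-under-repair claim precise. Repair is a form of backtracking and in general destroys uniform randomness, but here the repair only alters exposed literals along a single root-to-fatality path in $G$, whose length is dominated by the depth of the branching process and is thus $O(1)$ in expectation. I would argue that, conditioning on the path and on the reassigned values, the remainder of the formula is still uniform with updated parameters, and that the $O(1)$ contamination contributes only lower-order corrections to the drift and to the flag probability. Handling this bookkeeping cleanly -- so that Wormald's theorem still applies with the stated error terms -- is the most delicate part of the argument, and is the reason the repair is deliberately designed to be as local as possible.
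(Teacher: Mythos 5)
Your overall strategy matches the paper: use Wormald's ODE method to track $(X,Y_2,Y_3)/n$, analyze the inner loop as a subcritical branching process, reduce ``no empty clause'' to ``no flag raised'' via Lemma~\ref{lem:3isat:failure}, and finish by handing off to the exact 2-iSAT solver. However, there is a genuine conceptual gap in how you identify the controlling threshold.

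You write that the subcriticality of the inner-loop branching process is ``controlled by keeping $y_2/x$ below the 2-iSAT threshold $\nicefrac32$ established in Section~\ref{sec:2isat}.'' That conflates two unrelated constants. The $\nicefrac32$ of Proposition~\ref{prop:satisf-of-2sat} governs the satisfiability of the \emph{residual} 2-iSAT formula after the outer loop terminates (and the termination rule uses $c' = \nicefrac{50}{39}<\nicefrac32$). The subcriticality of the red-clause queue in the inner loop is governed by a different constant: each red clause, upon servicing, spawns new red clauses with expected number $\frac{13}{12}\cdot\frac{y_2}{x}$, the factor $\frac{13}{12}=2\Exp P$ coming from Lemma~\ref{lem:rint:moments-of-P}. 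Subcriticality therefore requires $y_2/x < \nicefrac{12}{13}\approx 0.923$, which is the $\eps$-good condition used throughout Sections~\ref{sec:Q}, \ref{sec:inner} and \ref{sec:outer}, and appears in the IVP constraint~\eqref{eq:3isat:critic-line} as $13y<12x$. It is \emph{this} inequality, not $y_2/x<\nicefrac32$, that the numerically solved trajectory must respect, and it is the binding constraint that determines $c=2.3$. If you instead ran the IVP against $y_2/x<\nicefrac32$ you would accept a larger $c$, at which point $\Exp\ZI$ diverges, the drift estimates of Lemma~\ref{lem:inner:cond-change-X} fail, and Wormald's theorem no longer applies.

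A second, smaller gap: you assert a per-outer-iteration flag probability of $O(1/n)$ and conclude $o(1)$ over $O(n)$ iterations, but $O(n)\cdot O(1/n)=O(1)$, not $o(1)$. The paper's Lemma~\ref{lem:inner:no-failure} gets a genuinely sub-$1/n$ bound, namely $O(\polylog n/n^2)$, precisely because the repair mechanism soaks up the events of probability $O(\polylog n/n)$ (a single collision with a colored clause) and a flag requires a \emph{double} rare event (e.g.\ $\HItwo$ in Lemma~\ref{lem:inner:probabs-list}, or a flag in Phase~II conditioned on a repair having occurred). Your sketch treats all fatalities as order $1/n$ flag events, which is exactly the rate one would have \emph{without} repair; you need to make the $n^{-2}$ savings explicit.
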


The value $2.3$ is determined through the numerical solution of an initial value problem.  It corresponds to the point in which the increase in red clauses in each
iteration of the inner loop would become so large that the inner loop will not terminate.


\newcommand{\refouter}[1]{(o-\ref{#1})}
\newcommand{\refouternoparen}[1]{o-\ref{#1}}
\newsavebox{\looop}
\sbox{\looop}{%
  \begin{minipage}{1.0\linewidth}\flushleft%
    \begin{enumerate}[({o-{\ref{algstep:nb9ds8bf98estr}}}.1)]
    \item Choose a variable $\avar$ uar.
    \item Invoke {\it Inner loop} (Phase I).
    \item $t := t+1$
    \end{enumerate}%
  \end{minipage}%
}

\begin{algorithm}[htp]\flushleft%
  \begin{enumerate}[({o-}1)]
  \item\label{algstep:inner:start} Given: 3-CNF-formula; positive constant $c'$.
  \item $t := 0$
  \item\label{algstep:nb9ds8bf98estr} While $Y_2(t) + Y_3(t) > c' X(t)$:\\[.2\baselineskip]
    \usebox{\looop}
  \item In every $3$-clause, remove one literal at random.
  \item Invoke Chepoi et al.'s algorithm (cf.~Section~\ref{sec:2isat}) for the remaining 2-iSAT formula.
  \end{enumerate}
  \caption{\quad\it UC w/ backtracking (outer loop)}\label{algo:3isat:outer}
\end{algorithm}%


\begin{table}[htb]
  \centering
  \begin{tabular}[t]{lp{.85\textwidth}}
    \toprule
    Color     & Meaning \\
    \midrule
    Uncolored & All literals in the clause are unexposed.\\
    Black     & All literals are exposed. \\
    Red       & The clause has precisely one unexposed literal.   The tentative values of any other variables in the clause make the corresponding literals false.
    In particular, unexposed 1-clauses are red. \\ 
    Blue      & The clause contains precisely one unexposed literal and at least one exposed literal which evaluates to true for the tentative value of its variable.\\
    Pink      & The clause is a 3-clause, precisely one of its literals is exposed, and this literal evaluates to false for the tentative value of its variable.\\
    Turquoise & The clause is a 3-clause, precisely one of its literals is exposed, and this literal evaluates to true for the tentative value of its variable.\\
    \bottomrule
  \end{tabular}
  \caption{Semantics of the colors of the clauses.}\label{tab:colors}
\end{table}

\newcommand{\refinner}[1]{(i-\ref{#1})}
\newcommand{\refinnernoparen}[1]{i-\ref{#1}}
\newsavebox{\ilooop}
\savebox{\ilooop}{%
    \begin{minipage}{.89\linewidth}\flushleft%
      \begin{enumerate}[{(\refinnernoparen{algstep:inner:red1cl}.}1)]
      \item\label{algstep:inner:hyperedge:obacht} If there is a red, blue, or black 3-clause: 
        \hurz!
      \item\label{algstep:inner:cycle:obacht} If the graph $G$ contains a cycle, or $\avar_j$ is in a blue clause: \hurz!
      \item\label{algstep:inner:double-hit:obacht} If $\avar_j$ occurs in three or more red clauses (including $\aclause_j$): \hurz!
      \item\label{algstep:inner:repair-situ:obacht} Otherwise: \textbf{Phase~I} is completed.
        Let $\aclause'$ be the unique red clause different from $\aclause_j$ containing $\avar_j$ in a literal $\aliteral' = \avar_j\in \aJ'$.  Repair the unique
        path between~$\avar_0$ and~$\aclause_j$; then initiate \textbf{Phase II}.
      \end{enumerate}%
    \end{minipage}%
}

\begin{algorithm}[htp]\flushleft%
  \begin{enumerate}[({i}-1)]
  \item Given:
    \begin{itemize}
    \item In \textbf{Phase I}: formula consisting of 2- and 3-clauses only; a (free) variable $\avar_0$.
    \item In \textbf{Phase II}: formula consisting of 1-, 2- and 3-clauses.
    \end{itemize}
  \item $j:=0$
  \item Initialize:\label{algstep:inner:AI}
   Expose the occurrences of $\avar_0$ in all clauses.
    \begin{itemize}
    \item In \textbf{Phase I} only:
      \begin{enumerate}[({\refinnernoparen{algstep:inner:AI}}.1)]
      \item Tentatively set $\avar_0$ to $\nfrac12$.
      \item Initialize the graph $G := (\{\avar_0\},\emptyset)$.
      \end{enumerate}
    \item In \textbf{Phase II} only:
      \begin{enumerate}[({\refinnernoparen{algstep:inner:AI}}.1)]
      \item Color all 1-clauses red.
      \end{enumerate}
    \end{itemize}
  \item Expose the intervals associated with $\avar_0$. Color clauses containing $\avar_0$ according to Tab.~\ref{tab:colors}.
  \item $j := j+1$ \label{algstep:inner:goto-anchor}
  \item If there is no red clause,\label{algstep:inner:while} exit inner loop: Set all variables to their tentative values; remove satisfied clauses and remove
    violated literals from their clauses; return to outer loop.
  \item\label{algstep:inner:current-var} Select a red clause $\aclause_j$ at random; let $\anatom_j$ be the unexposed literal in $\aclause_j$; expose current
    variable $\avar_j$ of $\anatom_j$
  \item\label{algstep:inner:expose-occ-colored} Expose all occurrences of $\avar_j$ in colored clauses.
  \item If $\avar_j$ is contained in a red clause other than $\aclause_j$:\label{algstep:inner:red1cl}
    \begin{itemize}
    \item In \textbf{Phase~I} only:\\[.2\baselineskip]%
      \usebox{\ilooop}\\[.3\baselineskip]%
    \item In \textbf{Phase II} only: \hurz!
    \end{itemize}%
  \item\label{algstep:inner:hit-remaining-occ} Expose all occurrences of $\avar_j$ in all uncolored clauses.
  \item For every uncolored 2-clause $\avar_j \in I \lor \anothervar \in J$ containing $\avar_j$, add to $G$ the vertex $\anothervar$ and the edge \mbox{$\avar_j
      \in I \lor \anothervar \in J$} \quad between $\avar_j$ and $\anothervar$.
  \item Tentatively set $\avar_j$ to $\bar x(\anI_j)$.
  \item\label{algstep:inner:colorize} Update the colors of all clauses containing $\avar_j$.
  \item Goto step~\refinner{algstep:inner:goto-anchor}.
  \end{enumerate}
  \caption{\quad\it Inner loop}\label{algo:3isat:inner}
\end{algorithm}


\newcommand{\refrepair}[1]{(r-\ref{#1})}
\newcommand{\refrepairnoparen}[1]{r-\ref{#1}}
\begin{algorithm}[htp]\flushleft%
  \begin{enumerate}[({r}-1)]
  \item Given: Set of colored 1-, 2- and 3-clauses; a literal $\aliteral' = \avar_k\in \aJ'$; a path of the form\\
    $\avar_0$, \quad 
    $\avar_0\in \aJ_0 \lor \avar_1\in \anI_1$, \quad
    $\avar_1\in \aJ_1 \lor \avar_2\in \anI_2$, \quad
    $\dots$, \quad
    $\avar_{k-1}\in \aJ_{k-1} \lor \avar_k\in \anI_k$;
  \item For $j=0,\dots,k-1$:\label{algstep:set-loop}
    \begin{enumerate}[({\refrepairnoparen{algstep:set-loop}}.1)]
    \item Set $\avar_j$ (permanently) to $\bar x(\aJ_j)$
    \end{enumerate}
  \item\label{algstep:repair:x-k} Set $\avar_k$ (permanently) to $\bar x(\aJ')$
  \item\label{algstep:repair:finalize} Set all variables from Phase~I, except those which have just been set in~\refrepair{algstep:set-loop} and
    \refrepair{algstep:repair:x-k}, to their tentative values; remove satisfied clauses and remove violated literals from their clauses.
  \end{enumerate}
  \caption{\quad\it Repair path}\label{algo:3isat:repair}
\end{algorithm}

\subsection{Comparison to algorithms for classical SAT}

For classical SAT, if a variable $\avar$ is set to a value, the probability that a random literal containing $\avar$ evaluates to true is $\nfrac 12$ ---
independent of the value.  As will become apparent in the next section, this is far from true for random interval literals.  There, the value $\nfrac 12$ is the
single, most likely value to be contained in a random interval (the probability is $\nfrac12$) and all other values are less likely. Hence, we will assign
$\nfrac12$ to the variables as long as possible which is for all free variables.

The rationale behind assigning the value $\nfrac12$ to free variables is two-fold.
Firstly, it makes the analysis a lot more easy than if one tries to find a maximum cardinality subset of literals containing $\avar$ all of whose intervals have
pairwise non-empty intersection.
Secondly, for large numbers of literals containing $\avar$, the maximum cardinality of a subset with pairwise intersecting intervals is asymptotically attained by
taking all literals with intervals containing $\nfrac12$ (this is Theorem~4.7 of Scheinerman's paper~\cite{Scheinerman88}). This, in particular, implies that
assigning an interval of values to a variable does asymptotically not lead to a satisfying interpretation of the formula which is not satisfying if assigning the
single value $\nfrac12$.

The situation for constrained variables is similar, but a bit more complicated.  For constrained variables, we are free only to choose the value for the variable
within the interval $\anI$ for the literal $\aliteral = \avar \in \anI$ which we wish to satisfy.  Unlike to classical SAT, where this does not change the
probability that other random literals containing $\avar$ are satisfied, depending on $\anI$, this probability may change considerably.  Moreover, for two literals
containing $\avar$, the two events of both being satisfied simultaneously with $\aliteral$ are not independent.

However, an adaption of Scheinerman's argument mentioned above shows that, asymptotically, the best choice is to take the point $\anI$ which is closest to
$\nfrac12$ as we do in our algorithm.

Concerning the backtracking part of the algorithm, we would like to point out the difference to the approach in~\cite{FriezeSuen96}.  If the (essentially
identical) fatality is suffered, a very elegant remedy is to simply flip the values of all variables with tentative values: if the tentative value of a variable is
\TRUE, make it \FALSE, and vice versa.  Needless to say, for variable values in a larger set, there is no obvious choice for the new value of a variable.  Thus, in
our approach, we have to choose the variable values in a smart manner, with the single aim to undo the fatality. Namely, those variables that led to the fatality
are assigned $\bar x(I)$ as described in \emph{Repair Path} (Algorithm~\ref{algo:3isat:repair}).

\subsection{Proof of the \hurz-lemma}

\begin{proof}[Proof of Lemma~\ref{lem:3isat:failure}]
  Assume that Algorithm~\ref{algo:3isat:inner} does not \hurz.

  The only place where a 0-clause can be generated without having \hurzed\ is in the final step~\ref{algstep:repair:finalize} of the repair,
  Algorithm~\ref{algo:3isat:repair}.  Clearly, none of the clauses on the path will become empty.
  
  Moreover, setting the final variable, $\avar_k$, cannot create an empty clause, because of the conditions in
  steps~(\refinnernoparen{algstep:inner:red1cl}.\ref{algstep:inner:hyperedge:obacht})
  and~(\refinnernoparen{algstep:inner:red1cl}.\ref{algstep:inner:cycle:obacht}).

  For a 3-clause to become empty, it is necessary that when the repair is invoked in Algorithm~\ref{algo:3isat:inner}, all three of its literals have been exposed
  (possibly in the same iteration).  In other words, it must have been red, blue, or black in step
  (\refinnernoparen{algstep:inner:red1cl}.\ref{algstep:inner:hyperedge:obacht}), a contradiction.
  
  For a 2-clause to become empty, both literals must have been exposed, one of them possibly in the iteration where the repair occurs. Moreover, if it was blue,
  the value of the variable satisfying one of its literals must change during the repair.
  In other words, the following three scenarios are possible:
  \begin{enumerate}[\it(i)]
  \item it was black before the repair was invoked
  \item it was red before the repair was invoked, but it contains $\avar_j$
  \item it was blue before the repair was invoked, it is of the form $\avar_i \in \anI_i \lor \avar_j\in \anI_j$ for some $i<j$, and $\avar_i$ is one of the variables
    set in step~\refrepair{algstep:set-loop} of Algorithm~\ref{algo:3isat:repair}.
  \end{enumerate}

  In case~(i), if the black 2-clause becomes an empty clause, either it was red when its final literal was exposed, a contradiction, or it was blue, which means
  that at least one of its variables lies on the path which is repaired. If the whole clause lies on the path, we have already noted that it cannot become empty.
  If only one of its variables is on the path, then it must be an edge in the tree having one end vertex on the path and the other lying further away from the root
  than the path. The fact that it is black means that the variable which is not on the path was the current variable of some earlier iteration $i<j$.  But then the
  corresponding literal was either the selected literal $\aliteral_i$, in which case it was satisfied by the tentative value of $\avar_i$, or the if-condition in
  step~\refinner{algstep:inner:red1cl} for iteration $i$ held, which is a contradiction (either a repair occurred, or the algorithm has \hurzed).
  
  In case~(ii), if the 2-clause is on the path, it does not become empty.  If it is the unique other red clause $\aclause'$, then it will be satisfied in the
  initialization of Phase~II.

  Case~(iii), is not possible because of the condition in step~(\refinnernoparen{algstep:inner:red1cl}.\ref{algstep:inner:cycle:obacht})
\end{proof}

\subsection{Random formulas}

The following easy facts (see the discussion at the beginning of this section) underly the analysis of the
algorithm on random formulas.

\begin{lemma}\label{lem:3isat:uar}
  If Algorithm~\ref{algo:3isat:outer} is invoked with a uar random 3-iSAT formula, then
  \begin{enumerate}[(a)]
  \item at the beginning of each iteration of the outer loop, the current formula is distributed uar conditioned on the number of unused variables, 2-clauses,
    and 3-clauses;
  \item at the beginning of each iteration of the inner loop, the current formula is distributed uar conditioned on the number of unused variables, 1-clauses,
    2-clauses, 3-clauses, and the colors of the clauses.
  \item at the beginning of Phase~II in the inner loop, the current formula is distributed uar not only conditioned on the number of unused variables, 1-clauses,
    2-clauses, 3-clauses, the colors of the clauses, and the list $L$ of clauses which are known not to contain $\avar_0$ and the list of clauses in which an
    occurrence of $\avar_0$ has been exposed.
  \end{enumerate}
\end{lemma}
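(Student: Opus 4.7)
The plan is to prove (a), (b), and (c) simultaneously by induction on the combined step-count through Algorithms~\ref{algo:3isat:outer}, \ref{algo:3isat:inner}, and~\ref{algo:3isat:repair}, using the principle of deferred decisions as the only essential tool. The base case is the initial uar formula: by our model, the triples of variables in each 3-clause are drawn independently and uar, and conditional on these triples, the intervals are iid samples from the random interval distribution. Hence any partial exposure of clauses or intervals leaves all unexposed clauses/intervals still iid uniform conditioned on what has been revealed. The entire content of the lemma amounts to the claim that, at each of the three checkpoints, the data listed is a \emph{sufficient statistic} for the algorithm's exposure history so far.

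For (b), I would walk through the inner loop step by step. Each exposure event (selecting a red clause $\aclause_j$, exposing $\avar_j$, exposing $\avar_j$'s occurrences first in colored and then in uncolored clauses, exposing intervals, and recoloring in step~\refinner{algstep:inner:colorize}) is precisely captured by the updated coloring of Table~\ref{tab:colors}: the color of a clause records exactly which of its literals have been exposed and their truth values under the current tentative assignment. The random choices made by the algorithm itself (which red clause to service, which free variable to select in the outer loop) are functions of the \emph{exposed} data alone, so they do not disturb the conditional distribution. Consequently, the unexposed literals are iid uniform conditioned on the colors and clause counts.

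For (a), I would note that once the inner loop exits and tentative values are made permanent, satisfied clauses are deleted and violated literals are removed. What remains is a collection of 2- and 3-clauses whose unexposed parts are still uar; since all colored-clause information has been absorbed into the deletions (colored clauses either disappear or shed literals), the final conditioning reduces to $(X(t{+}1), Y_2(t{+}1), Y_3(t{+}1))$. For (c), the only extra information present at the start of Phase~II beyond what (b) records is that at the beginning of Phase~I every clause present had its $\avar_0$-occurrences exposed: hence for every clause in the Phase~II formula we know either that $\avar_0$ does not occur in it (the list $L$) or which literal inside is the exposed $\avar_0$-literal. Conditioned on this, on the colors, and on the clause counts, the unexposed literals are still iid uniform, by the same deferred-decision argument.

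The main delicate point, and the only place where a careful bookkeeping argument is required, is the repair (Algorithm~\ref{algo:3isat:repair}) and its interface with Phase~II. One must verify two things: first, that the choice of which variables lie on the repaired path, and hence which receive the specific permanent values $\bar{x}(\aJ_j)$, depends only on exposed information (namely the tree $G$, which is built from exposed 2-clauses along the inner loop); and second, that finalizing all other tentative values in step~\refrepair{algstep:repair:finalize} and removing satisfied clauses/violated literals preserves uar-ness of the unexposed remainder. Both follow because the repair never inspects any unexposed literal: it only rewrites values on variables whose relevant clauses are already colored. Once this is checked, the extra conditioning in (c) (the lists $L$ and of $\avar_0$-occurrences) is exactly what is needed to pass from the state recorded in (b) through the repair into Phase~II, and the induction closes.
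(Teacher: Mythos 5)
The paper states this lemma without a proof, offering only the pointer ``easy facts (see the discussion at the beginning of this section),'' i.e.\ the standard exposure/deferred-decisions paradigm attributed to Achlioptas. Your write-up fleshes out exactly that argument --- an induction over exposure steps, the observation that all of the algorithm's choices (including the construction of the tree $G$ and the repair path) are measurable with respect to already-exposed data, and the corresponding sufficient-statistics claims for (a)--(c) --- so it is correct and follows essentially the same route the authors had in mind.
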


By Lemma~\ref{lem:3isat:uar}, the history of the random process defined by the outer loop, that is, for each $t$, the state of the formula and all other
information relevant to how the algorithm will proceed, available at the beginning of iteration $t$, is completely determined by
\begin{equation}
  \Hist(t) := (X(t),Y_2(t),Y_3(t));
\end{equation}
in particular it is Markov.


\section{Computations for random intervals}\label{sec:cmprndint}
In this section, we make some computations regarding intervals chosen uar from the subintervals of $[0,1]$ as described before. We refer to
\cite{Scheinerman88,JusticzScheinermanWinkler90} for further background.

We aim to study the event $\bar x(I) \in J$, with two random intervals $I$ and $J$ ($\bar x$ is defined in~\eqref{eq:3isat:def-bar-x}).  We start with the
following observation.

\begin{lemma}[\cite{Scheinerman88}]\label{lem:rint:x-in-I}
  For $x \in [0,1]$ and for a random interval $I$, we have
  \begin{equation*}
    \Prb[ x\in I ] = 2 x(1-x).
  \end{equation*}
  In particular, the probability that a random interval contains the point $\nfrac12$ is $\nfrac12$.
\end{lemma}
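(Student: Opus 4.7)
The proof is a short direct computation from the definition of a random interval. I would set $I = [a,b]$ with $a = \min\{X,Y\}$ and $b = \max\{X,Y\}$, where $X, Y$ are independent and uniform on $[0,1]$, as specified in the random model. The event $x \in I$ is exactly the event $\{a \le x\} \cap \{b \ge x\}$, i.e., $\{\min\{X,Y\} \le x\} \cap \{\max\{X,Y\} \ge x\}$.

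The key step is to rewrite this via complementation: the event fails precisely when either both $X, Y > x$ or both $X, Y < x$. By independence and uniformity,
\begin{equation*}
  \Prb[\min\{X,Y\} > x] = (1-x)^2, \qquad \Prb[\max\{X,Y\} < x] = x^2,
\end{equation*}
and these two events are disjoint. Therefore
\begin{equation*}
  \Prb[x \in I] \;=\; 1 - (1-x)^2 - x^2 \;=\; 2x - 2x^2 \;=\; 2x(1-x),
\end{equation*}
as desired. Substituting $x = \nfrac12$ yields $\Prb[\nfrac12 \in I] = \nfrac12$.

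Alternatively, one could derive this as a limit of the identity $\Prb([a,b] \subseteq [u,v]) = 2(1-u)v$ stated in the random model section, by taking $u = x - \eps$, $v = x + \eps$ and noting that $x \in I$ iff $a \le x \le b$, which is a slightly less clean route. The direct min/max computation above is essentially a one-line argument and presents no obstacles.
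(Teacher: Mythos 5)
Your main argument is correct and is the natural direct computation: the event $x\in I$ is the complement of the disjoint union $\{\min\{X,Y\}>x\}\cup\{\max\{X,Y\}<x\}$, giving $1-(1-x)^2-x^2=2x(1-x)$. The paper itself does not prove this lemma but cites it to Scheinerman~\cite{Scheinerman88}; your computation is the standard one and is exactly what a self-contained proof would look like here.

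One small caution on your parenthetical ``alternative route'': the event $[a,b]\subseteq[x-\eps,x+\eps]$ is \emph{not} a relaxation of $x\in[a,b]$. It says the random interval is trapped inside a shrinking window around $x$, an event whose probability tends to $0$ as $\eps\to0$, so no useful limit emerges from that identity. (The relevant containment would be $[x-\eps,x+\eps]\subseteq[a,b]$, i.e.\ $a\le x-\eps$ and $b\ge x+\eps$, which does tend to $\Prb[a\le x\le b]$; but that is the opposite inclusion.) Since you correctly dismiss this route and rely on the min/max decomposition, the proof as a whole stands.
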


The cumulative distribution function of $\bar x(I)$ can be written down.

\begin{lemma}\label{lem:rint:close-to-half}
  For a random interval $I$, the random variable $\bar x(I)$ has cumulative distribution function
  \begin{equation}\label{eq:rint:distrib-cth}
    F(t) := \begin{cases}
      0           & \text{ if } t \le 0\\
      t^2,        & \text{ if } t < \nfrac12 \\
      1-(1-t)^2,  & \text{ if } t \ge \nfrac12\\
      1           & \text{ if } t \ge 1.
    \end{cases}
  \end{equation}
\end{lemma}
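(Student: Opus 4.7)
The plan is to compute $F(t) = \Prb[\bar x(I) \le t]$ directly by translating the event into a condition on the two random endpoints $x,y \in [0,1]$ that generate $I = [a,b]$ with $a=\min\{x,y\}$, $b=\max\{x,y\}$.

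First, I would record the key structural observation that pins down $\bar x(I)$: since $\bar x(I)$ is the point of $I$ closest to $\nfrac12$, one has $\bar x(I) = \nfrac12$ whenever $\nfrac12 \in I$ (i.e.\ $a\le\nfrac12\le b$); $\bar x(I) = b$ whenever $I \subseteq [0,\nfrac12)$; and $\bar x(I) = a$ whenever $I \subseteq (\nfrac12,1]$. In particular, $\bar x(I)$ never takes values in the open interval $(0,\nfrac12)$ unless $I \subseteq [0,\nfrac12)$, and symmetrically on the right.

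Next, I would split into two regimes. For $0 \le t < \nfrac12$, the event $\{\bar x(I)\le t\}$ is equivalent to the event $\{b \le t\}$: indeed, if $\nfrac12\in I$ or $I\subseteq(\nfrac12,1]$ then $\bar x(I) \ge \nfrac12 > t$, so we must have $I\subseteq [0,\nfrac12)$, and then $\bar x(I)=b\le t$ iff $b\le t$. Since $b\le t$ is simply $\max\{x,y\}\le t$, its probability equals $t^2$. For $\nfrac12 \le t \le 1$, I would use the complement: $\{\bar x(I)>t\}$ forces $I\subseteq(t,1]$ (otherwise $\bar x(I)\le\nfrac12\le t$), and on this event $\bar x(I)=a$, so the event coincides with $\{a>t\}=\{\min\{x,y\}>t\}$, which has probability $(1-t)^2$. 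Hence $F(t)=1-(1-t)^2$ on this range. The boundary cases $t\le 0$ and $t\ge 1$ are immediate, and the two expressions $t^2$ and $1-(1-t)^2$ agree at $t=\nfrac12$, so $F$ is continuous.

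There is essentially no obstacle; the only point requiring care is the case split at $\nfrac12$, ensuring that the mass $\Prb[\nfrac12\in I]=\nfrac12$ guaranteed by Lemma~\ref{lem:rint:x-in-I} is accounted for correctly (it contributes a jump-free plateau at $t=\nfrac12$ since $F(\nfrac12^-)=\nfrac14$ and $F(\nfrac12)=\nfrac34$ differ by exactly $\nfrac12$, matching the atom of $\bar x(I)$ at $\nfrac12$). This bookkeeping check is the natural sanity test to conclude the proof.
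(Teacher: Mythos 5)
Your argument is correct, and it is precisely the ``direct computation'' that the paper declines to spell out: you identify the three cases $\nfrac12\in I$, $I\subseteq[0,\nfrac12)$, $I\subseteq(\nfrac12,1]$, reduce the event $\{\bar x(I)\le t\}$ (resp.\ its complement) to $\{\max\{x,y\}\le t\}$ (resp.\ $\{\min\{x,y\}>t\}$), and read off $t^2$ and $(1-t)^2$. The closing bookkeeping check on the atom of size $\nfrac12$ at $t=\nfrac12$ is a sensible sanity test, consistent with Lemma~\ref{lem:rint:x-in-I}.
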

\begin{proof}
  Direct computation.
\end{proof}

Let $X$ be a random variable with cumulative distribution function $F$ as in~\eqref{eq:rint:distrib-cth}, and define
\begin{equation}\label{eq:Q:def-P}
  P := 1-2X(1-X).
\end{equation}
Thus, by the previous two lemmas, for the probability that, for two random intervals $I$ and~$J$ we have
$\bar x(I) \in J$, we have
\begin{equation*}
  \Prb[ \bar x(I) \in J ]
  = \Exp( \Prb[ X \in J \mid P ] )
  = \Exp( 1-P )
  = 1 - \Exp P.
\end{equation*}

The following computations are straightforward, see~\ref{apx:rint:moments-of-P}.
\begin{lemma}\label{lem:rint:moments-of-P}\mbox{}%
  \begin{enumerate}[(a)]
  \item\label{lem:rint:moments-of-P:mean}      $\displaystyle \Exp P   = \nfrac{13}{24}$
  \item\label{lem:rint:moments-of-P:2ndmoment} $\displaystyle \Exp P^2 = \nfrac{3}{10}$
  \qed
  \end{enumerate}
\end{lemma}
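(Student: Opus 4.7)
The plan is a straightforward but careful moment computation, so I will describe how I would organize the bookkeeping rather than grinding out the arithmetic. The key observation is that the CDF $F$ in Lemma~\ref{lem:rint:close-to-half} has a jump of size $\nfrac12$ at $t=\nfrac12$: indeed $F(\nfrac12{-}) = \nfrac14$ while $F(\nfrac12) = \nfrac34$. Geometrically this reflects the obvious fact that whenever the random interval $I=[a,b]$ straddles $\nfrac12$ (an event of probability $\nfrac12$ by Lemma~\ref{lem:rint:x-in-I}), the point $\bar x(I)$ collapses to $\nfrac12$; otherwise $\bar x(I)$ is the endpoint of $I$ nearest to $\nfrac12$. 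Thus the distribution of $X := \bar x(I)$ consists of a point mass of $\nfrac12$ at $\nfrac12$ together with the ``tent'' density $f(t) = 2t$ on $[0,\nfrac12)$ and $f(t) = 2(1-t)$ on $(\nfrac12,1]$, obtained by differentiating the two branches of $F$. I would state this decomposition explicitly at the outset, since every subsequent integral splits as two tent integrals plus one atom contribution.

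Next, I would reduce both target identities to moments of $X$. From $P = 1 - 2X(1-X) = 1 - 2X + 2X^2$,
\begin{equation*}
  \Exp P   = 1 - 2\Exp X + 2\Exp X^2,
  \qquad
  \Exp P^2 = 1 - 4\Exp X + 8\Exp X^2 - 8\Exp X^3 + 4\Exp X^4.
\end{equation*}
So the task is exactly to compute $\Exp X^k$ for $k=1,2,3,4$. Each one is
\begin{equation*}
  \Exp X^k = \int_0^{1/2} 2 t^{k+1}\,dt \;+\; \tfrac12\cdot(\tfrac12)^k \;+\; \int_{1/2}^1 2 t^k(1-t)\,dt,
\end{equation*}
an elementary antidifferentiation. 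The symmetry of the distribution about $\nfrac12$ gives $\Exp X = \nfrac12$ immediately and provides a free sanity check on $\Exp X^3$ via $\Exp(X - \nfrac12)^3 = 0$; this will catch any arithmetic slip.

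Assembling: for part~(a), only $\Exp X^2$ is needed, and substitution yields $\Exp P = \nfrac{13}{24}$. For part~(b), plugging the four moments into the expansion of $\Exp P^2$ yields $\nfrac{3}{10}$. A slightly cleaner route that I would actually use in the writeup is to set $U := X(1-X)$ so that $P = 1 - 2U$; then $\Exp P = 1 - 2\Exp U$ and $\Exp P^2 = 1 - 4\Exp U + 4\Exp U^2$ with $\Exp U = \Exp X - \Exp X^2$ and $\Exp U^2 = \Exp X^2 - 2\Exp X^3 + \Exp X^4$, which organizes the numerics into two short lines.

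There is no real obstacle here beyond keeping denominators straight across the split integrals and the atom; the symmetry check on odd moments and a common-denominator computation (e.g.\ over $240$) suffice to make the arithmetic mechanical. Since the authors have already deferred the calculation to the appendix reference~\ref{apx:rint:moments-of-P}, a one-line proof pointing at the distributional description of $X$ and the expansion of $P^2$ in moments of $X$ is entirely appropriate.
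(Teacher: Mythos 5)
Your proposal is correct and follows essentially the same route as the paper's appendix computation: both describe the mixed distribution of $X = \bar x(I)$ (an atom of weight $\nfrac12$ at $\nfrac12$ plus the tent density from differentiating $F$), and both reduce $\Exp P$ and $\Exp P^2$ to moments of $X(1-X)$ — your ``cleaner route'' via $U := X(1-X)$ with $\Exp P = 1 - 2\Exp U$ and $\Exp P^2 = 1 - 4\Exp U + 4\Exp U^2$ is precisely the paper's organization. The only cosmetic difference is that you also sketch the alternative of expanding $P^2$ in raw moments $\Exp X^k$, which you then correctly set aside in favor of the $U$-substitution.
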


\begin{lemma}\label{lem:rndint:middlemost-point-in-other}
  For two random intervals $I,J$, the following is true.
  \begin{equation*}
    \Prb[ \bar x(I) \in J ] = \frac{11}{24}.
  \end{equation*}
\end{lemma}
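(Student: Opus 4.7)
The proof will be essentially an assembly of the pieces already laid out in the text just before the lemma statement. The plan is to condition on $X := \bar x(I)$ and apply the tower property of expectation.

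First, I would note that the intervals $I$ and $J$ are independent, so conditional on $X = \bar x(I)$, the event $\{\bar x(I) \in J\}$ is just the event $\{x \in J\}$ for the fixed point $x = X$ and the random interval $J$. By Lemma~\ref{lem:rint:x-in-I}, this conditional probability equals $2X(1-X)$. With $P := 1 - 2X(1-X)$ as defined in \eqref{eq:Q:def-P}, this gives
\begin{equation*}
  \Prb[\bar x(I) \in J \mid X] = 2X(1-X) = 1 - P.
\end{equation*}

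Next, I would take expectations on both sides and use the tower property:
\begin{equation*}
  \Prb[\bar x(I) \in J] = \Exp\bigl(\Prb[\bar x(I) \in J \mid X]\bigr) = \Exp(1 - P) = 1 - \Exp P.
\end{equation*}
Finally, I would plug in the value $\Exp P = \nfrac{13}{24}$ from Lemma~\ref{lem:rint:moments-of-P}\eqref{lem:rint:moments-of-P:mean}, yielding $\Prb[\bar x(I) \in J] = 1 - \nfrac{13}{24} = \nfrac{11}{24}$.

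There is no real obstacle here, since the preceding display in the text already carries out the conditioning step and Lemma~\ref{lem:rint:moments-of-P} supplies the needed moment. The only thing worth being careful about is that $X = \bar x(I)$ is a measurable function of $I$ (so conditioning on $X$ is at least as coarse as conditioning on $I$, and independence of $J$ from $I$ passes to independence from $X$); this justifies treating $J$ as a fresh random interval inside the conditional probability. Once this observation is in place, the computation is a one-line application of Lemmas~\ref{lem:rint:x-in-I} and~\ref{lem:rint:moments-of-P}.
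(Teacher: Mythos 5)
Your proposal is correct and follows exactly the paper's route: the paper's proof is a one-liner citing Lemmas~\ref{lem:rint:x-in-I}, \ref{lem:rint:close-to-half}, and \ref{lem:rint:moments-of-P}(a), with the tower-property computation $\Prb[\bar x(I)\in J] = \Exp(1-P) = 1-\Exp P$ already displayed in the text immediately preceding the lemma. Your added remark about $\bar x(I)$ being a measurable function of $I$ is a correct justification of the conditioning step, which the paper leaves implicit.
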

\begin{proof}
  Immediate from Lemmas \ref{lem:rint:x-in-I}, \ref{lem:rint:close-to-half}, and~\ref{lem:rint:moments-of-P}(a).
\end{proof}

\begin{remark}
  It could be interesting to choose the intervals in a different way rather than uniformly at random, for instance, to reflect certain realistic
  structures.  However, the strategy of choosing intervals does not change the main analysis of the algorithm.  The only adaptions to be made are the previous
  computations of the probabilities, and thus the new constants need to be used in the analysis, which can lead to different results.
\end{remark}

\newcommand{\x}{\texttt{x}}
\newcommand{\y}{\texttt{y}}
\newcommand{\I}{\texttt{I}}
\newcommand{\J}{\texttt{J}}
\newcommand{\K}{\texttt{K}}

\section{2-iSAT}\label{sec:2isat}\label{sec:tarjan}
In this section, we take a brief glance at the situation for random 2-iSAT.  The reason is that, ultimately, our 3-iSAT algorithm reduces the 3-iSAT formula to one
with exactly two literals per clause, and then invokes the polynomial time algorithm by Chepoi et al.~\cite{ChepoiCreignouHermannSalzer10} to find a solution.  We
need to make sure that the resulting random 2-iSAT instance is satisfiable.

For this, we proceed along the same lines as~\cite{ChvatalReed92}, using Chepoi et al.'s Aspvall-Plass-Tarjan-type~\cite{AspvallPlassTarjan79} certificate for the
non-satisfiability of signed 2-SAT formulas for set systems satisfying the Helly-property.  We describe the certificate now.

For a 2-iSAT formula $F$, define a digraph $G_F$ which contains two vertices labeled $\avar\anI t$ and $\avar\anI f$, respectively, for every literal $\avar\in I$
occurring in $F$.  For every clause $\avar \in \anI \lor \avar' \in I'$ of $F$, the digraph $G_F$ contains two arcs $\avar\anI f \to \avar'\anI' t$ and $\avar'\anI'
f \to \avar\anI t$.  We refer to these arcs as \textit{clause arcs}.  Moreover, for every two literals $\avar\in\anI$ and $\avar\in\aJ$ occurring in $F$, if $\anI \cap
\aJ = \emptyset$, the digraph $G_F$ contains the two arcs $\avar\anI t \to \avar\aJ f$ and $\avar\aJ t \to \avar\anI f$.  These arcs we call \textit{disjointness
  arcs.}

For a literal $\avar\in I$ occurring in $F$, we refer to the vertex $\avar\anI t$ as a \textit{positive} vertex, and to $\avar\anI f$ as a \textit{negative} vertex.
Moreover, we say that these two vertices are \emph{complements} of each other; in other words, the complement of the (positive) vertex $\avar\anI t$ is the (negative)
vertex $\avar\anI f$ and vice versa.  Note that arcs originating from negative vertices are clause arcs, while arcs originating from positive vertices are
disjointness arcs.

Chepoi et al.\ relate the satisfiability of $F$ to the strongly connected components (\textit{SCC}s) of $G_F$.

\begin{proposition}[Aspvall-Plass-Tarjan-type certificate,~\cite{ChepoiCreignouHermannSalzer10}]\label{prop:2isat:Chepoi}
  The formula $F$ is satisfiable if, and only if, no SCC of $G_F$ contains a pair of vertices which are complements of each other.
\end{proposition}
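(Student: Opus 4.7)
The plan is to follow the classical Aspvall--Plass--Tarjan approach, with the Helly property for intervals (which on the line is trivial: any pairwise intersecting family of intervals has a common point) taking over the role of negation. The proof splits into the two implications.

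For the ``only if'' direction, I would fix a satisfying interpretation $\phi$ and label every vertex of $G_F$ with the value $\TRUE$ or $\FALSE$ according to whether the literal it represents is true under~$\phi$; that is, $\avar\anI t$ is $\TRUE$ iff $\phi(\avar)\in\anI$, and $\avar\anI f$ carries the complementary value. The core step is to verify that every arc of $G_F$ goes from $\TRUE$ to $\TRUE$. For a clause arc $\avar\anI f\to\avar'\anI' t$ coming from the clause $\avar\in\anI\lor\avar'\in\anI'$: if the tail is $\TRUE$ then $\phi(\avar)\notin\anI$, and by satisfiability $\phi(\avar')\in\anI'$, so the head is $\TRUE$. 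For a disjointness arc $\avar\anI t\to\avar\aJ f$ (with $\anI\cap\aJ=\emptyset$): if the tail is $\TRUE$ then $\phi(\avar)\in\anI$, so $\phi(\avar)\notin\aJ$, whence the head is $\TRUE$. Consequently all vertices of any SCC share a common label, which rules out a complementary pair in a single SCC.

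For the ``if'' direction, I would use the standard reverse-topological construction on the condensation DAG of $G_F$. Pick a sink SCC $S$; by assumption its complement $\bar S$ (the SCC consisting of the complementary vertices) is a different SCC, and by the inherent arc-reversing, complement-swapping symmetry of $G_F$ it is a source of the condensation. Label all of $S$ with $\TRUE$ and all of $\bar S$ with $\FALSE$, delete both, and iterate. The resulting labeling satisfies two standard properties: complementary vertices get opposite labels, and truth propagates forward along arcs. It remains to turn this into an interpretation $\phi$. For each variable $\avar$, form $\mathcal T(\avar):=\{\anI : \avar\anI t \text{ is }\TRUE\}$. I claim the intervals in $\mathcal T(\avar)$ are pairwise intersecting, for if $\anI,\aJ\in\mathcal T(\avar)$ were disjoint, the disjointness arc $\avar\anI t\to\avar\aJ f$ would force $\avar\aJ f$ to be $\TRUE$, contradicting that its complement $\avar\aJ t$ is $\TRUE$. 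By Helly on the line, $\bigcap_{\anI\in\mathcal T(\avar)}\anI$ is non-empty (or $\mathcal T(\avar)$ is empty), and one sets $\phi(\avar)$ to any point of this intersection (or arbitrarily in $[0,1]$). For an arbitrary clause $\avar\in\anI\lor\avar'\in\anI'$: either $\avar\anI t$ is $\TRUE$, and then $\phi(\avar)\in\anI$ by construction; or $\avar\anI f$ is $\TRUE$, and the clause arc $\avar\anI f\to\avar'\anI' t$ forces $\avar'\anI' t$ to be $\TRUE$, whence $\phi(\avar')\in\anI'$.

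The substantive step — and the one that distinguishes the iSAT setting from the classical one — is the Helly argument used to extract $\phi(\avar)$ from the pairwise intersecting family $\mathcal T(\avar)$. For subintervals of $[0,1]$ this is immediate, but in the broader signed 2-SAT setting of Chepoi et al.\ it is precisely the Helly hypothesis on $\mathcal S$ that makes this step go through; everything else is a transcription of the classical 2-SAT SCC argument, with ``positive/negative literal'' replaced by the pair $\avar\anI t,\avar\anI f$.
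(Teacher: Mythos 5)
The paper does not actually prove Proposition~\ref{prop:2isat:Chepoi}: it is stated as a citation of the Aspvall--Plass--Tarjan-type characterization from Chepoi, Creignou, Hermann, and Salzer~\cite{ChepoiCreignouHermannSalzer10}, and the proof is deferred to that reference. So there is no in-paper argument to compare yours against.

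That said, your proof is correct, and it is the natural translation of the classical 2-SAT SCC argument to the signed setting. Two points are worth flagging, both of which you handle correctly. First, your ``if'' direction does not (and need not) establish the converse implication ``$\avar\anI f$ labeled $\TRUE$ $\Rightarrow$ $\phi(\avar)\notin\anI$''; you only use the one-sided statement ``$\avar\anI t$ labeled $\TRUE$ $\Rightarrow$ $\phi(\avar)\in\anI$'' together with forward truth propagation along clause arcs, and that is enough to certify every clause. Second, the single place where something genuinely new is needed compared to classical 2-SAT is exactly where you put it: the graph $G_F$ records only pairwise disjointness of intervals on the same variable, so from a $\TRUE$-labeling you can only conclude that the family $\mathcal T(\avar)$ is pairwise intersecting, and it is the Helly property of intervals (trivial on a line, but the crucial hypothesis in the general signed setting of Chepoi et al.) that lets you pick a single point $\phi(\avar)$ witnessing all of them simultaneously. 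You also implicitly rely on the arc-reversing, complement-swapping symmetry of $G_F$ (that $u\to v$ is an arc iff $\bar v\to\bar u$ is), which you invoke to pair sink SCCs with source SCCs; this does hold for both clause arcs and disjointness arcs as the paper defines them, and is worth stating explicitly since it is what makes the peel-off construction well defined.
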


\begin{remark}
  A path in $G_F$ of length $\ell$ contains $\lfloor l/2 \rfloor$ or $\lceil l/2 \rceil$ disjointness arcs, and no two of them are incident.
\end{remark}

Chepoi et al.\ also give an algorithm which determines, in polynomial time, whether a formula $F$ is satisfiable, and if it is, produces a satisfying
interpretation.  We refer to their paper for details.

From Proposition~\ref{prop:2isat:Chepoi}, we obtain the following corollary.

\begin{corollary}\label{cor:pretzel}
  If $F$ is not satisfiable, then $G_F$ contains a \textit{bicycle,} i.e., a directed walk
  \begin{equation*}
    u_0\rightarrow\dots\rightarrow u_{\ell+1},
  \end{equation*}
  with at least one clause arc, and the following properties:
  \begin{enumerate}[(a)]
  \item the literals in the vertices $u_1,\dots,u_\ell$ are all distinct;
  \item the literals in the vertices $u_0$ and $u_{\ell+1}$ occur among the literals in the other vertices;
  \item the clauses in the arcs are all distinct.
  \end{enumerate}
\end{corollary}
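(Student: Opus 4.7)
By Proposition~\ref{prop:2isat:Chepoi}, the hypothesis gives a strongly connected component $S$ of $G_F$ containing a vertex $v$ together with its complement $\bar v$; in particular $G_F$ admits directed walks in both directions between $v$ and $\bar v$. The plan is to extract the bicycle from a shortest such structure via a minimality argument, then extend it just enough to get the correct endpoint condition (b).

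Fix an SCC-complementary pair $(v, \bar v)$ for which $s := d_{G_F}(v, \bar v)$ is minimum, and let $W \colon v = w_0 \to w_1 \to \dots \to w_s = \bar v$ be a shortest walk. A standard shortcut argument yields that the vertices $w_1, \dots, w_{s-1}$ have pairwise distinct literals, all different from the literal of $v$: if $w_i$ and $w_j$ with $1 \le i < j \le s-1$ shared a literal, then either $w_i = w_j$ (enabling a strict shortening of $W$, contradicting the minimality of $s$) or $w_i$ and $w_j$ are complementary, in which case both still lie in $S$ (they are reachable from $v$ and reach $\bar v$, hence $v$) and $d(w_i, \overline{w_i}) \le j - i < s$ again contradicts minimality; the same shortcut excludes $w_i \in \{v, \bar v\}$ for interior $i$. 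Next I would extend $W$ one arc at a time past $\bar v$ while staying inside $S$ (possible since every vertex of $S$ has an out-neighbour in $S$), producing $w_{s+1}, w_{s+2}, \dots$, and stop at the smallest $k \ge 1$ for which the literal of $w_{s+k}$ coincides with that of some earlier $w_i$; termination is guaranteed by finiteness of the literal set. Setting $u_j := w_j$ for $0 \le j \le s+k$ and $\ell := s+k-1$, I take this walk as the candidate bicycle. Property~(a) follows by combining the distinctness inside $W$ with the stopping rule, since the three blocks $\{w_1,\dots,w_{s-1}\}$, $\{w_s\}$ and $\{w_{s+1},\dots,w_{s+k-1}\}$ carry pairwise disjoint literals. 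Property~(b) is immediate: the literal of $u_0 = v$ coincides with that of the interior vertex $w_s = \bar v$, and the literal of $u_{\ell+1}$ matches that of an earlier $w_i$ which by~(a) must be strictly interior. The ``at least one clause arc'' condition is free from the remark preceding the corollary, since disjointness arcs cannot appear consecutively.

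The main obstacle is verifying~(c). If two arcs in the walk shared a clause $\avar \in \anI \lor \avar' \in \anI'$, they would form the pair $\avar\anI f \to \avar'\anI' t$ and $\avar'\anI' f \to \avar\anI t$, forcing both polarities of each of the two literals to appear as vertices of the walk. By~(a) the interior literals are distinct, so at least one vertex of each duplicated polarity-pair must coincide with an endpoint $u_0$ or $u_{\ell+1}$, leaving only a handful of positional configurations; I would finish by a case analysis showing that each configuration contradicts either the minimality of $s$ or the minimality of $k$. If the case analysis grows unwieldy, a cleaner route is to strengthen the stopping rule in the extension step so as to also terminate on a repeated clause, thereby building~(c) directly into the construction at the cost of slightly more bookkeeping in~(a) and~(b).
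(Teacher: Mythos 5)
Your proposal follows essentially the same route as the paper: pick a complementary pair $(v,\bar v)$ inside an SCC with $d(v,\bar v)$ minimal, use that minimality to argue the interior of a shortest $v$–$\bar v$ walk carries pairwise distinct literals, then continue the walk past $\bar v$ inside the SCC until the first vertex whose literal has already been seen. The only real deviation is the extension step: the paper continues specifically along a shortest path $Q$ from $\bar v$ back to $v$ (giving the clean dichotomy ``either $P\cup Q$ is already the bicycle, or truncate $Q$ at the first repeated literal''), whereas you walk arbitrarily inside the SCC; both versions are fine since every vertex on any $\bar v$-to-$v$ path already lies in the SCC, and the stopping rule terminates by finiteness.

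On the point you flag as the main obstacle, namely property (c): the paper asserts the resulting walk is a bicycle without spelling out (c) at all, so you are if anything being more careful. The case analysis you sketch does close: if a clause contributed both of its arcs, both of its literals would have to occur twice among the vertices, hence would have to be exactly $\aliteral$ (at positions $0$ and $s$) and the literal of $u_{\ell+1}$ (at positions $i$ and $\ell+1$). Placing the two arcs as consecutive pairs $u_j\to u_{j+1}$ then always forces either an arc entering $u_0$ or an arc leaving $u_{\ell+1}$, neither of which exists, since $v$ and $u_{\ell+1}$ appear only at their respective endpoints (any other occurrence would violate the literal-distinctness you already established). So the configuration count is small and your minimality arguments are not even needed at that stage; the walk structure alone suffices. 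Your fallback of strengthening the stopping rule to also trigger on a repeated clause would also work, with property (b) recoverable because the head of a repeated clause arc shares its literal with the tail of the earlier companion arc.
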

\begin{proof}
  For a vertex $v$, we denote its complement by $\bar v$.  By what we said about the different types of arcs, on every path from $v$ to $\bar v$, there is at least
  one clause arc.

  Choose an SCC and take a pair of complementing vertices $v$ and $\bar v$ in the SCC such that the distance from $v$ to $\bar v$ in $G_F$ is minimal.  Then, on the
  shortest path $P$ from $v$ to $\bar v$, no literal appears twice.  Denote by $\aliteral$ the literal defining $v$ and $\bar v$.
  
  Now take a shortest path $Q$ in $G_F$ form $\bar v$ to $v$.  If there is no literal other than $\aliteral$ which appears twice on $P\cup Q$, then $P\cup Q$ is a
  bicycle starting and ending in $v$.  On the other hand, if there is a literal $\aliteral'$ other than $\aliteral$ which appears twice on $P\cup Q$, then the
  desired bicycle is constructed by taking the path $P$ from $v$ to $\bar v$, and then the path $Q$ until the first vertex whose literal already occurred earlier.
\end{proof}

Suppose a 2-iSAT formula with $n$ variables and $m = cn$ clauses is drawn uniformly at random from the set of all such formulas (with the intervals all in
$[0,1]$).  We estimate the asymptotic probability that such a formula is satisfiable.

\begin{proposition}\label{prop:satisf-of-2sat}
  Let $c' < \nfrac32$.  If $m \le c'n$ then, whp as $n\to\infty$, a randomly drawn 2-iSAT instance is satisfiable.
\end{proposition}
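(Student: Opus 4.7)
The plan is a first-moment argument built on Corollary~\ref{cor:pretzel}: if $F$ is unsatisfiable then $G_F$ contains a bicycle, so it suffices to show $\EE[N] = o(1)$, where $N$ denotes the number of bicycles in $G_F$, and then invoke Markov's inequality.

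The starting point is two structural observations about $G_F$. First, clause arcs run from negative vertices to positive vertices, and disjointness arcs run from positive vertices to negative vertices; so along any directed walk the vertex signs and arc types strictly alternate. Second, the literal $\avar\in\anI$ appears in exactly one clause of $F$ almost surely, and consequently each negative vertex has out-degree exactly $1$, so all branching along walks happens at positive vertices. The expected out-degree of a positive vertex $\avar\anI t$ equals the expected number of literals of $F$ with variable $\avar$ (other than $\avar\in\anI$ itself) whose interval is disjoint from $\anI$. The former is $(1+o(1))\cdot 2m/n=(1+o(1))\cdot 2c'$; a short integration from the random-interval density stated at the start of Section~\ref{sec:3-isat} shows that two random intervals are disjoint with probability exactly $\nfrac13$; and the two events are independent. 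Thus the expected out-degree equals $(1+o(1))\cdot 2c'/3$, which is \emph{strictly less than $1$} precisely because $c'<\nfrac32$.

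It follows that the expected number of directed walks of length $\ell+1$ in $G_F$ starting at a given vertex is at most $(2c'/3)^{\lfloor(\ell+1)/2\rfloor}$, and, summed over the $O(m)$ starting vertices, is at most $O(m)\cdot(2c'/3)^{\ell/2}$. Bicycle conditions~(a) (distinct interior literals) and~(c) (distinct clauses on arcs) only reduce this count. Condition~(b), requiring that the literals of $u_0$ and $u_{\ell+1}$ each match one of the $\ell$ distinct interior literals, contributes an additional factor of order $O((\ell/m)^2)$, since out of the $\Theta(m)$ literals present in $F$ only $\ell$ can serve as valid closures at either end. Combining these bounds,
\begin{equation*}
\EE[N]\ \le\ \sum_{\ell\ge 1}O(\ell^2/m)\cdot(2c'/3)^{\ell/2}\ =\ O(1/n)\ =\ o(1).
\end{equation*}

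The main technical obstacle I anticipate is making condition~(b) fully rigorous. The endpoint vertex $u_{\ell+1}$ is determined by the walk, not a free coordinate, so the heuristic factor $O(\ell/m)$ per endpoint has to be extracted with care: in the detailed calculation one writes the expected count as a sum over ordered sequences of distinct clauses assigned to the clause arcs, and then separates the probability that a given disjointness arc exists (contributing the factor $\nfrac{1}{3n}$) from the probability that its terminus ``closes'' the bicycle by landing on a specific interior literal (contributing the extra $O(1/m)$). With that bookkeeping in place, the bound above follows.
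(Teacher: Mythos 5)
Your proof takes the same fundamental route as the paper: a first-moment bound on the expected number of bicycles supplied by Corollary~\ref{cor:pretzel}, with the same numerical ingredients ($p=\nfrac13$ for disjointness of two random intervals, and $2pc'<1$ precisely when $c'<\nfrac32$). The difference is organizational. The paper parametrizes bicycles by the number $r$ of clause arcs and enumerates them directly: there are at most $(r-1)^2\,n^{r-1}$ candidate bicycles with $r$ clause arcs, each occurring with probability at most $\bigl(\tfrac{m}{\binom{n}{2}}\bigr)^r p^{r-1}$, and the sum over $r$ converges exactly when $2pc'<1$. You instead parametrize by walk length and invoke a subcritical branching heuristic (out-degree $1$ at negative vertices, expected out-degree $\approx 2c'/3$ at positive ones). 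These are equivalent after re-indexing ($\ell\approx 2r$, $(2c'/3)^{\ell/2}\approx(2pc')^{r}$), and both yield $\Exp[N]=O(1/n)$. The one place you flag as delicate — extracting the $O((\ell/m)^2)$ closure factor for condition~(b) — is precisely what the paper's parametrization sidesteps: the $(r-1)^2$ counts the placements of $u_0,u_{\ell+1}$ among interior vertices, and the exponent $r-1$ (rather than $r+1$) in $n^{r-1}$ already encodes that the two endpoints introduce no new free variables. So your argument is correct, but finishing it along your lines requires the careful clause-by-clause bookkeeping you sketch at the end, whereas the paper's enumeration makes that accounting automatic.
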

The proof mimics that of Chv\'atal \& Reed~\cite{ChvatalReed92} for the classical 2-SAT very closely; we
include it here just to point out where the number $\nfrac32$ comes in.
\begin{proof}
  Given a fixed bicycle $u_0\rightarrow\dots\rightarrow u_{\ell+1}$ with $r$ clause arcs, the probability
  that it occurs in $G_F$ is at most
  \begin{equation*}
    \lt( \frac{m}{\binom{n}{2}} \rt)^r p^{r-1},
  \end{equation*}
  where $p := \nfrac13$ is the probability that two independently chosen intervals are
  disjoint~\cite{Scheinerman88}.  Hence, the expected number of bicycles with $r$ clause arcs occurring in
  $G_F$ is at most
  \begin{equation*}
    n^{r-1} (r-1)^2 \left( \frac{m}{\binom{n}{2}} \right)^r p^{r-1}
    \;=\;
    \frac{2\frac{m}{n}}{n-1} (r-1)^2 \left(\frac{2pm}{n-1} \right)^{r-1}
    \le
    \frac{2c'}{n-1} (r-1)^2 \left(2pc' \right)^{r-1}.
  \end{equation*}
  where the inequality follows from $\nfrac{m}{n}\le c'$.  Thus, the expected total number of bicycles is
  at most
  \begin{equation*}
    \frac{2 c'}{n-1}\sum_{r=1}^\infty r^2 \left(2pc' \right)^{r-1}.
  \end{equation*}
  The sum is finite if, and only if, $c'<\nfrac32$ for $n\to \infty$. Thus, in this case, the probability
  that a bicycle exists is $o_{c'}(1)$.
\end{proof}

Thus, for every $c' < \nfrac{3}{2}$, whp, a satisfying interpretation can be found by Chepoi et al.'s algorithm~\cite{ChepoiCreignouHermannSalzer10}. We make no
attempt at optimizing this bound as we indeed conjecture that this is the threshold for 2-iSAT.


\newcommand{\gfun}[1]{g_{\!{}_{#1}}}

\section{Total population size of our branching system}\label{sec:Q}

As is done in classical SAT, the sub-routine eliminating the unit clauses can be viewed as a ``discrete time'' queue in which customers (i.e., unit clauses) arrive
per time unit, the number depending on the customer currently serviced, and the single server, corresponding to one run of the inner loop of the algorithm, can
process at least one customer per time unit.  The number of iterations of the sub-routine then roughly corresponds to the length of the (first) busy period of the
server.

Here, since, we are only interested in the length of the first busy period, the ``queue'' is really a branching system, for which we need to know the total number
of individuals which are born before extinction.
Compared to classical SAT, the interval-version poses several small challenges which we address in this section.

Let $a$ be a non-negative integer, and $B(j)$, $j=0,1,2,\dots$, random variables taking values in the non-negative integers.  We say the following sequence of random
variables $Q(j)$ a \textit{discrete queue:}
\begin{align*}
  Q(0) &= 0 \\
  Q(1) &= a \\
  Q(j+1) &=
  \begin{cases}
    a,              &\text{ if } Q(j) = 0 \\
    Q(j)-1+B(j+1)   &\text{ if } Q(j) > 0
  \end{cases}
\end{align*}
The number $Q(j+1)$ is the number of individuals of the branching system after the $j$th individual has reproduced and died.

Denote by $Z$ the length of the first busy period of the server, that is, the total population size of the branching process:
\begin{equation*}
  Z := \sup\{ j\ge 0 \mid Q(i) > 0 \quad \forall i=1,\dots,j \} \quad=\quad \inf \{ j > 0 \mid Q(j) = 0 \} -1.
\end{equation*}

A straightforward adaption of the branching-process based textbook arguments for continuous-time M/G/1-queues gives the following
(see~\ref{apx:Q:qlength-properties}).

\begin{lemma}\label{lem:Q:qlength-properties}
  Suppose the $B(j)$, $j=1,2,\dots$, are iid with mean $\lambda_B$ and common probability generating function $\gfun{B}$.
  The probability generating function $h$ of $Z$ satisfies
  \begin{subequations}
    \begin{equation}\label{eq:Q:probGenF}
      h\Bigl(\tfrac{y}{\gfun{B}(y)}\Bigr) = y^a
    \end{equation}
    for every $y$ for which the power series $\gfun{B}(y)$ converges and does not vanish.  In particular, if $\lambda_B < 1$, we obtain
    \begin{equation}
      \Exp Z = \frac{a}{1 - \lambda_B}.
    \end{equation}
    Moreover, we have
    \begin{equation}\label{eq:Q:exp-moment-method}
      \Prb[ Z \ge \alpha ] \le \frac{\gfun{B}(y)^\alpha}{y^{\alpha-a}}
    \end{equation}
    for all $\alpha > 0$ and $y > 0$ with $y \ge \gfun{B}(y)$.
  \end{subequations}
  \qed
\end{lemma}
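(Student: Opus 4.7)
The plan is to identify $(Q(j))_{j\ge 0}$ with a Bienaym\'e--Galton--Watson branching process. The $a$ units present after step~$1$ play the role of founding ancestors; at each subsequent step one individual is ``served'' (removed) and $B(j+1)$ offspring are added, independently across steps by hypothesis. Under this identification, $Z$ is precisely the total progeny $T_a$ started with $a$ ancestors, and $T_a = T_1^{(1)}+\dots+T_1^{(a)}$ as a sum of iid copies of the single-ancestor total progeny $T_1$.

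First I would derive a functional equation for $h_1(s):=\Exp s^{T_1}$. Conditioning on the offspring of the founding ancestor gives $T_1 = 1 + \sum_{i\le B} T_1^{(i)}$ with the summands iid and independent of $B$, hence $h_1(s) = s\,\gfun{B}(h_1(s))$. By the independence of the $a$ ancestor lines, the PGF of $Z$ is $h(s)=h_1(s)^a$. Setting $y:=h_1(s)$, the fixed-point equation becomes $s = y/\gfun{B}(y)$, and substituting back yields $h(y/\gfun{B}(y)) = y^a$, which is~\eqref{eq:Q:probGenF}. For the mean when $\lambda_B<1$, I would either differentiate~\eqref{eq:Q:probGenF} at $y=1$ (using $\gfun{B}(1)=1$ and $\gfun{B}'(1)=\lambda_B$) or apply Wald's identity directly to $T_1 = 1+\sum_{i\le B} T_1^{(i)}$ to obtain $\Exp T_1 = 1/(1-\lambda_B)$, whence $\Exp Z = a\,\Exp T_1 = a/(1-\lambda_B)$.

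For the tail bound~\eqref{eq:Q:exp-moment-method}, I would use a Chernoff-style estimate on $s^Z$: for any $s\ge 1$,
\[
\Prb[Z\ge \alpha]\ \le\ s^{-\alpha}\,\Exp s^Z\ =\ s^{-\alpha}\,h(s).
\]
Given $y>0$ with $y\ge \gfun{B}(y)$, the choice $s:=y/\gfun{B}(y)\ge 1$ makes the Markov step valid, and invoking~\eqref{eq:Q:probGenF} replaces $h(s)$ by $y^a$; rearranging gives $\gfun{B}(y)^\alpha/y^{\alpha-a}$, as required.

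The only substantive point to be careful about is keeping the domain of the substitution $s\leftrightarrow y$ clean: $\gfun{B}(y)$ must converge and be non-vanishing, exactly as required in the statement, and $y\ge\gfun{B}(y)$ is precisely the condition that guarantees $s\ge 1$ for the Markov step. Beyond that, everything reduces to standard manipulations of probability generating functions in the spirit of the classical M/G/1 busy-period analysis, so I do not anticipate any genuine obstacle.
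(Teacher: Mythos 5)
Your proposal is correct and is essentially the paper's own proof: the paper (Appendix~\ref{apx:Q:qlength-properties}, following Grimmett--Stirzaker) derives the same fixed-point equation $y(x)=x\,\gfun{B}(y(x))$ for the single-ancestor family-size PGF, composes $a$ independent family lines to get $h(x)=y(x)^a$, substitutes to obtain~\eqref{eq:Q:probGenF}, and uses the same exponential-moment/Markov step for~\eqref{eq:Q:exp-moment-method}. Your Wald's-identity remark for the mean is a small stylistic alternative to the paper's differentiation-plus-Abel argument, but the decomposition and all key steps coincide.
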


\begin{remark}
  Since we are only interested in the first busy period, we make the following modification to the definition of~$Q$: If $Q(j)=0$ but $j > 0$, then we let
  $Q(j+1)=0$ (and not $Q(j+1)=a$ as above).  This makes some inequalities less cumbersome to write down.
\end{remark}

\subsection{Bounding the tail probability for iid binomial $B$}

Let $P$ be a random variable with values in $[0,1]$.  We say that a random variable $B$ has binomial
distribution with random parameter $P$, or $\Bin(m,P)$, if
\begin{equation*}
  \Prb[ B = k \mid P=p ] = \binom{m}{k}p^k(1-p)^{m-k}.
\end{equation*}

In our setting $n$ is a (large) integer, and $m = m(n)$ is an integer depending on $n$.  Define $\lambda = \lambda(n) := \frac{m}{n}$.
Let $P$ be as in~\eqref{eq:Q:def-P}, and suppose that $B$ is $\Bin(m,\nfrac{2P}{n})$.

\begin{lemma}\label{lem:Q:momgen-estimate}
  If $\lambda (y-1) \le \nfrac12$ we have
  \begin{equation*}
    \gfun{B}(y) \le \exp\Bigl( \tfrac{13}{12}\lambda(y-1)  + \tfrac{6}{5}\lambda^2(y-1)^2 \Bigr)
  \end{equation*}
\end{lemma}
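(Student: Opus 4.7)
The plan is to condition on $P$, use the standard bound $(1+x)^m \le e^{mx}$, and then turn the resulting moment generating function of $P$ into a quadratic in the exponent using $e^z \le 1 + z + z^2$ for $z$ not too large.

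First, the conditional pgf of $B$ given $P=p$ is that of $\Bin(m,2p/n)$, so
\begin{equation*}
  \gfun{B\mid P=p}(y) \;=\; \Bigl(1 + \tfrac{2p}{n}(y-1)\Bigr)^m.
\end{equation*}
Taking expectations and applying $1+x \le e^x$ coordinatewise (valid for $y\ge 0$ and $n$ large, so that $1+(2P/n)(y-1)>0$), together with $m = \lambda n$, yields
\begin{equation*}
  \gfun{B}(y) \;\le\; \Exp\bigl[\exp(2\lambda P(y-1))\bigr] \;=\; \Exp\bigl[e^{tP}\bigr],
\end{equation*}
where I set $t := 2\lambda(y-1)$.

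Next I would like to replace $e^{tP}$ by a quadratic polynomial in $tP$. The elementary inequality $e^z \le 1 + z + z^2$ holds on the range $z \le z_0$ for a numerical constant $z_0 \approx 1.79$; on $z \le 0$ it is immediate from the Taylor series with the alternating remainder. Since $P\in[0,1]$, the hypothesis $\lambda(y-1)\le \tfrac12$ gives $tP \le t \le 1 < z_0$ when $y\ge 1$, and $tP \le 0$ when $y<1$; in either case $e^{tP}\le 1 + tP + t^2P^2$ pointwise. Taking expectation and plugging in the moments from Lemma~\ref{lem:rint:moments-of-P},
\begin{equation*}
  \Exp\bigl[e^{tP}\bigr] \;\le\; 1 + t\,\Exp P + t^2\,\Exp P^2 \;=\; 1 + \tfrac{13}{24}\,t + \tfrac{3}{10}\,t^2.
\end{equation*}

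Finally, one more application of $1+z\le e^z$ (with $z=\tfrac{13}{24}t + \tfrac{3}{10}t^2$) gives
\begin{equation*}
  \gfun{B}(y) \;\le\; \exp\Bigl(\tfrac{13}{24}\,t + \tfrac{3}{10}\,t^2\Bigr),
\end{equation*}
and resubstituting $t=2\lambda(y-1)$ produces exactly $\tfrac{13}{12}\lambda(y-1) + \tfrac{6}{5}\lambda^2(y-1)^2$ in the exponent, as claimed. The only place care is needed is in justifying $e^z \le 1+z+z^2$ on the relevant range; that is handled cleanly by the hypothesis $\lambda(y-1)\le\tfrac12$ together with $P\in[0,1]$, and is the step where the numerical value $\tfrac12$ in the assumption is used.
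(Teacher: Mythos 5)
Your proposal is correct and matches the paper's proof essentially step for step: condition on $P$, bound $(1+(y-1)\tfrac{2P}{n})^m \le e^{2\lambda(y-1)P}$, apply $e^z \le 1+z+z^2$ pointwise using $2\lambda(y-1)P \le 1$, take expectations via Lemma~\ref{lem:rint:moments-of-P}, and finish with $1+z\le e^z$. The only (cosmetic) difference is that you spell out the range of validity of $e^z\le 1+z+z^2$ a bit more carefully than the paper, which simply asserts it for $t\le 1$.
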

\begin{proof}
  We have $e^t \le 1 + t + t^2$ for all $t\le 1$.
  For ease of notation, let $\tau := \Exp P = \nfrac{13}{24}$ and $\tau_2 := \Exp(P^2) = \nfrac{3}{10}$, by Lemma~\ref{lem:rint:moments-of-P}.
  Since $(y-1)\lambda 2 P \le 1$ with probability one, the following estimate holds:
  \begin{multline*}
    \gfun{B}(y)
    = \sum_{k=0}^m \Exp\Prb\bigl( B = k \mid P \bigr)\, y^k
    \\
    = \sum_{k=0}^m \Exp\Bigl(  \tbinom{m}{k}(\tfrac{2P}{n})^k(1-\tfrac{2P}{n})^{m-k} \, y^k \Bigr)
    = \Exp\biggl(\sum_{k=0}^m \tbinom{m}{k}(y\tfrac{2P}{n})^k(1-\tfrac{2P}{n})^{m-k} \biggr)
    \\
    = \Exp\bigl(  (1+(y-1)\tfrac{2P}{n})^m  \bigr)
    \le \Exp\Bigl( e^{2(y-1)\lambda P} \Bigr)
    \le \Exp\lt( 1 + 2(y-1)\lambda P + 4 (y-1)^2\lambda^2 P^2 \rt)
    \\
    = 1 + 2\tau(y-1)\lambda + 4 \tau_2 (y-1)^2\lambda^2
    \le e^{2\tau(y-1)\lambda + 4\tau_2 (y-1)^2\lambda^2}
    = \exp\Bigl( \tfrac{13}{12}\lambda(y-1)  + \tfrac{6}{5}\lambda^2(y-1)^2 \Bigr),
  \end{multline*}
  as claimed.
\end{proof}

Now suppose that $P(j)$, $j=1,2,\dots$, are iid random variables distributed as $P$ defined
in~\eqref{eq:Q:def-P}, and that $B(j)$, $j=1,2,\dots$, are iid random variables distributed as
$\Bin(m,\nfrac{2P(j)}{n})$.

\begin{lemma}\label{lem:Q:iPoi:gen-fun-bound}
  For every $\eps > 0$ there exist $\delta > 0$ and $C \ge 1$ such that, if $\nfrac12 \le
  \frac{13}{12}\lambda \le 1-\eps$, the following is true.

  For all $\alpha \ge C a$, there exists a $y$ with $0 < \gfun{B}(y) < 1 < y \le 2$ such that
  \begin{equation}\label{eq:Q:iPoi:tail}
    \frac{\gfun{B}(y)^\alpha}{y^{\alpha-a}} \le e^{-\delta\alpha}.
  \end{equation}
\end{lemma}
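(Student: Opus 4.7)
The plan is to pass to logarithms in \eqref{eq:Q:iPoi:tail}, reducing the desired inequality to
\[
\alpha \log \gfun{B}(y) - (\alpha-a)\log y \;\le\; -\delta\alpha.
\]
I will search for $y$ of the form $y = 1+s$ with $s$ a small positive constant depending only on $\eps$, and plug in the upper bound from Lemma~\ref{lem:Q:momgen-estimate}. Writing $\mu := \tfrac{13}{12}\lambda$ (so $\tfrac12 \le \mu \le 1-\eps$, in particular $\lambda \le \tfrac{12}{13}$), that lemma yields
\[
\log \gfun{B}(y) \;\le\; \mu s + \tfrac{6}{5}\lambda^{2} s^{2},
\]
valid as long as $\lambda s \le \tfrac12$, which will be automatic for the small $s$ I will pick.

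Next I would use $\log(1+s) \ge s - s^{2}/2$ (valid for $s \ge 0$) to control the subtracted term. Combining the two estimates,
\[
\alpha \log \gfun{B}(y) - (\alpha-a)\log y \;\le\; s\bigl[\alpha(\mu-1) + a\bigr] + s^{2}\Bigl[\alpha\bigl(\tfrac{6}{5}\lambda^{2} + \tfrac{1}{2}\bigr) - \tfrac{a}{2}\Bigr].
\]
Using $\mu - 1 \le -\eps$ and $\lambda \le 1$, this is at most
\[
-\alpha\eps s + a s + \tfrac{17}{10}\alpha s^{2}.
\]
Choosing $C := 2/\eps$ forces $a \le \alpha\eps/2$ whenever $\alpha \ge Ca$, so the linear coefficient is bounded by $-\alpha\eps/2$, leaving
\[
-\tfrac{\alpha\eps}{2}\, s + \tfrac{17}{10}\,\alpha s^{2}.
\]

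Minimising this quadratic over $s>0$ gives the explicit choice $s^\ast = \tfrac{5\eps}{34}$, which lies in $(0,1]$ for $\eps \le 1$, so $y^\ast := 1+s^\ast \in (1,2]$ is admissible. Substituting back, the right-hand side equals $-\tfrac{5}{136}\eps^{2}\alpha$, and any $\delta \le \tfrac{5}{136}\eps^{2}$ then satisfies \eqref{eq:Q:iPoi:tail}. Finally, $\gfun{B}(y^\ast)>0$ holds automatically since $\gfun{B}(y) \ge \Prb[B=0] = \Exp(1-\tfrac{2P}{n})^{m} > 0$, and the requirement $\lambda s^\ast \le \tfrac12$ needed to invoke Lemma~\ref{lem:Q:momgen-estimate} is easy to verify from $\lambda \le \tfrac{12}{13}$ and $s^\ast = \tfrac{5\eps}{34}$.

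The argument is essentially a Chernoff-type optimisation, and there is no real obstacle; the only thing that requires care is tracking which constants depend on $\eps$ (and verifying that the chosen $s^\ast$ simultaneously lies in the validity ranges of both the log-moment bound from Lemma~\ref{lem:Q:momgen-estimate} and the Taylor estimate for $\log(1+s)$). The dependence $\delta = \Theta(\eps^{2})$ is intrinsic to the quadratic structure of the bound, and the choice $C = 2/\eps$ is what lets the linear-in-$s$ term absorb the $a$ contribution.
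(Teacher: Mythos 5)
Your proof is correct, and it follows the same overall strategy as the paper's (an exponential-moment/Chernoff optimization in $y=1+s$, fed by the generating-function bound of Lemma~\ref{lem:Q:momgen-estimate}), but your execution of the optimization step is genuinely simpler and yields cleaner constants. The paper minimizes the exact exponent $\alpha r u + \tfrac{6\cdot 12^2}{5\cdot 13^2}\alpha r^2 u^2 - (\alpha-a)\log(u+1)$ by differentiating, solving the resulting quadratic for the stationary point $u_r$ in closed form, and then defining $\delta$ as half the maximum of $-\delta_r(u_r)$ over $r\in[\nfrac12,1-\eps]$. You instead replace $\log(1+s)$ by the elementary lower bound $s-s^2/2$ (valid for all $s\ge 0$ since $\tfrac{d}{ds}\bigl[\log(1+s)-s+s^2/2\bigr]=\tfrac{s^2}{1+s}\ge 0$), which turns the whole exponent into an explicit quadratic in $s$ with minimum at $s^\ast=\tfrac{5\eps}{34}$, giving $\delta=\tfrac{5}{136}\eps^2$ and $C=2/\eps$ outright. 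This avoids the quadratic-formula bookkeeping and the $O(\nfrac{a}{\alpha})$ error-tracking in~\eqref{eq:exponent:dsoiewhtroi239ydsh}, at the cost of a (harmless) weakening of the exponent. Your range checks ($s^\ast<1$, $\lambda s^\ast\le\nfrac12$, $C\ge 1$) all go through because $\nfrac12\le\tfrac{13}{12}\lambda\le 1-\eps$ forces $\eps\le\nfrac12$.

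One point to tighten: the lemma's stated side condition ``$0<\gfun{B}(y)<1<y$'' cannot be taken literally, since $\gfun{B}$ is a probability generating function and hence nondecreasing with $\gfun{B}(1)=1$, so $\gfun{B}(y)\ge 1$ for all $y>1$. What is actually needed to invoke~\eqref{eq:Q:exp-moment-method} from Lemma~\ref{lem:Q:qlength-properties} is $0<\gfun{B}(y)\le y$. You verified $\gfun{B}(y^\ast)>0$ but not $\gfun{B}(y^\ast)<y^\ast$; fortunately the latter does follow from exactly the estimates you already wrote down, since $\log\gfun{B}(y^\ast)-\log y^\ast\le -\eps s^\ast+\tfrac{17}{10}(s^\ast)^2=-\tfrac{15}{136}\eps^2<0$. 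It would be worth adding that one line so that the hypothesis of the exponential-moment inequality is explicitly discharged.
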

\begin{proof}
  For ease of notation, let $u := y-1$ and $r := \frac{13}{12}\lambda$, so that $\nfrac12 \le r \le 1-\eps$.
  If $0 < u < \frac{1-r}{r} \le 1$, by Lemma~\ref{lem:Q:momgen-estimate}, we may estimate
  \begin{equation*}
    \gfun{B}(y) \le \exp\bigl( \tfrac{13}{12}\lambda u+ \tfrac{6}{5}\lambda^2 u^2 \bigr),
  \end{equation*}
  and thus obtain
  \begin{equation*}
    \Prb[ Z \ge \alpha ] \le \exp\bigl( \alpha(  \tfrac{13}{12}\lambda u + \tfrac{6}{5}\lambda^2 u^2) - (\alpha-a)\log(u+1) \bigr).
  \end{equation*}
  We write the exponent as
  \begin{equation}\label{eq:exponent:oeihfwoqcih}\tag{$*$}
    \alpha r u + \tfrac{6\cdot 12^2}{5\cdot 13^2} \alpha r^2 u^2 - (\alpha-a)\log(u+1).
  \end{equation}
  In order to find a $u$ minimizing~\eqref{eq:exponent:oeihfwoqcih}, we take the derivative and solve the
  resulting quadratic equation
  \begin{equation}\label{eq:innerloop:exponent:osudfbo832hrlsdin}\tag{$**$}
    \tfrac{12^3}{5\cdot 13^2} r^2 u^2
    + \bigl(  r  + \tfrac{12^3}{5\cdot 13^2} r^2  \bigr) u
    - (1-r) + \nfrac{a}{\alpha}  = 0
  \end{equation}
  The value of $u$ which works is the larger one of the two roots:
  \begin{equation}\label{eq:exponent:dsoiewhtroi239ydsh}\tag{$*$$*$$*$}
    u_r :=
       \frac{- \Bigl( 1  + \frac{12^3}{5\cdot 13^2} r  \Bigr)
        +\sqrt{ \Bigl(1  - \frac{12^3}{5\cdot 13^2} r  \Bigr)^2
          + \frac{4\cdot 12^3}{5\cdot 13^2}
        }
      }{\frac{2\cdot 12^3}{5\cdot 13^2} r }
      - O(\nfrac{a}{\alpha}),
  \end{equation}
  with an absolute constant in the $O(\cdot)$ (see \ref{apx:Q:iPoi:busy-prd-tail} for the computation).
  The numerator is greater than zero (implying $y>1$) if, and only if, $4\cdot\frac{12^3}{5\cdot 13^2} r <
  \frac{4\cdot 12^3}{5\cdot 13^2}$, which is equivalent to $r < 1$.  Thus, there exists a $C$ depending
  only on $r$, such that $u_r > 0$ whenever $\alpha \ge C a$.
  Moreover, by letting $u=\frac{1-r}{r}$ in~\eqref{eq:innerloop:exponent:osudfbo832hrlsdin}, we see that $u_r<\frac{1-r}{r} \le 1$, as required.
  Letting $u=u_r$ in~\eqref{eq:exponent:oeihfwoqcih}, we obtain, for $\alpha \ge C a$,
  \begin{equation}\label{eq:Q:iPoi:pre-tail}\tag{$*$$*$$*$$*$}
    \bigl( \delta_r(u_r)+O(\nfrac{1}{C}) \bigr) \alpha,
  \end{equation}
  with an absolute constant in the $O(\cdot)$, where
  \begin{equation*}
    \delta_r(u) = ru + \tfrac{6\cdot 12^2}{5\cdot 13^2} r^2 u^2 - \log(u+1)
  \end{equation*}
  (see \ref{apx:Q:iPoi:busy-prd-tail} for the computation).
  We have $\delta_r(u_r) < 0$, because $\delta_r(0)=0$ and since, by the choice of $u_r$, the derivative of
  $\delta_r$ in the open interval $\lt[0,u_r\rt[$ is negative.  This also implies that $\gfun{B}(y) < 1 < y$.
  Let
  \begin{equation*}
    \delta_* := \max\bigl\{ \delta_r(u_r) \bigm| \nfrac12\le r\le 1-\eps \bigr\} < 0,
  \end{equation*}
  Finally, increase $C$, if necessary, to take care of the dependence on $O(\nfrac{1}{C})$ in~\eqref{eq:exponent:dsoiewhtroi239ydsh}
  and~\eqref{eq:Q:iPoi:pre-tail}, and define $\delta := -\delta_*/2$.
  This completes the proof of the lemma.
\end{proof}

\begin{lemma}\label{lem:Q:binomial:mean-and-tail}
  If $\lambda \le (1-\eps)\frac{12}{13}$, then
  \begin{subequations}
    \begin{align}
      \Exp Z &= \frac{a}{1-\frac{13}{12}\lambda}                                                            \label{eq:Q:iBin:mean}\\
      \intertext{%
        and there exist $\delta > 0$ and $C\ge1$ depending only on $\eps$, such that for all $\alpha \ge C a$ we have the upper tail inequality} %
      \Prb[ Z \ge \alpha ] &\le e^{-\delta\alpha }.  \label{eq:Q:iBin:tail}
    \end{align}
  \end{subequations}
\end{lemma}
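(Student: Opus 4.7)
Part~\eqref{eq:Q:iBin:mean} should be a direct application of Lemma~\ref{lem:Q:qlength-properties}, provided we can identify the mean $\lambda_B$ of a single offspring distribution. By the tower property and Lemma~\ref{lem:rint:moments-of-P}(a),
\begin{equation*}
  \lambda_B = \Exp B = \Exp\bigl(\Exp(B\mid P)\bigr) = \Exp\bigl(\tfrac{2mP}{n}\bigr) = 2\lambda \cdot \tfrac{13}{24} = \tfrac{13}{12}\lambda,
\end{equation*}
and the hypothesis $\lambda \le (1-\eps)\tfrac{12}{13}$ ensures $\lambda_B \le 1-\eps < 1$. Lemma~\ref{lem:Q:qlength-properties} then gives~\eqref{eq:Q:iBin:mean} immediately.

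For~\eqref{eq:Q:iBin:tail}, the plan is to combine the Chernoff-type inequality~\eqref{eq:Q:exp-moment-method} with the generating-function estimate of Lemma~\ref{lem:Q:iPoi:gen-fun-bound}. In the regime $\tfrac12 \le \tfrac{13}{12}\lambda \le 1-\eps$, Lemma~\ref{lem:Q:iPoi:gen-fun-bound} supplies, for every $\alpha \ge Ca$, a $y \in (1,2]$ with $0 < \gfun{B}(y) < 1$ and $\gfun{B}(y)^\alpha / y^{\alpha-a} \le e^{-\delta\alpha}$. Since $\gfun{B}(y) < y$, the hypothesis of~\eqref{eq:Q:exp-moment-method} is met, and substituting the bound on $\gfun{B}(y)^\alpha / y^{\alpha-a}$ yields exactly~\eqref{eq:Q:iBin:tail}.

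The remaining range $\lambda < \tfrac{6}{13}$ is not covered by Lemma~\ref{lem:Q:iPoi:gen-fun-bound}, so I would handle it by stochastic domination. Fix any reference value $\lambda_0 \in [\tfrac{6}{13}, (1-\eps)\tfrac{12}{13}]$, set $m_0 := \lceil \lambda_0 n \rceil$, and couple the increments $B(j) = \Bin(m, \tfrac{2P(j)}{n})$ and $B_0(j) = \Bin(m_0, \tfrac{2P(j)}{n})$ by using the same $P(j)$, so that $B(j) \le B_0(j)$ almost surely. A straightforward induction on $j$ gives $Q(j) \le Q_0(j)$ for all $j$, hence $Z \le Z_0$ almost surely, and the tail bound for $Z_0$ established in the previous paragraph transfers to $Z$.

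The only delicate point is making $\delta$ and $C$ depend only on $\eps$, not on $\lambda$. In the proof of Lemma~\ref{lem:Q:iPoi:gen-fun-bound}, the exponent $\delta_r(u_r)$ is continuous and strictly negative on the compact interval $r \in [\tfrac12, 1-\eps]$, so $\delta_* := \sup_r \delta_r(u_r) < 0$ there, and a single $C$ works uniformly on this interval. Taking the minimum of the $(\delta, C)$ produced in the middle-$\lambda$ regime and those inherited via the coupling in the small-$\lambda$ regime yields a single pair depending only on $\eps$, completing the plan. I do not expect any obstacle beyond this bookkeeping.
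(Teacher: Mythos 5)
Your proposal is correct and follows essentially the same route as the paper's proof: the mean via Lemma~\ref{lem:Q:qlength-properties} with $\lambda_B = \frac{13}{12}\lambda$, the tail via Lemmas~\ref{lem:Q:qlength-properties} and~\ref{lem:Q:iPoi:gen-fun-bound} in the regime $\frac{13}{12}\lambda \ge \frac12$, and stochastic domination (by the case $\lambda_0=\nfrac{6}{13}$) for smaller $\lambda$. The paper dispatches the domination step in a single sentence; your explicit coupling of the $B(j)$ via a shared family $P(j)$ and $U(j,i)$, together with the induction $Q(j)\le Q_0(j)$, is exactly the intended justification, and your observation that $\delta,C$ for $\lambda_0$ already depend only on $\eps$ correctly disposes of the bookkeeping.
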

\begin{proof}
  Equation~\eqref{eq:Q:iBin:mean} is directly from Lemma~\ref{lem:Q:qlength-properties}.  

  Lemmas \ref{lem:Q:qlength-properties} and~\ref{lem:Q:iPoi:gen-fun-bound} together imply the tail inequality in the case when $\frac{13}{12}\lambda \ge \nfrac12$.
  For smaller values of $\lambda$, we just note that increasing $\lambda$ increases the length of the first busy period, so that the probability for $\lambda :=
  \nfrac{6}{13}$ gives an upper bound for the probability for smaller values of $\lambda$.
\end{proof}

\subsection{Not-independent binomial}
The arrivals at the queue in the context of our algorithm are not completely independent.  Here we deal with the small amount of dependence.

We now describe what kind of $B(j)$ we allow.
The setting is that $n$ is a (large) integer, and that $m = m(n)=\Theta(n)$.
Let $r > 1$ and
\begin{equation}\label{eq:Q:z}
  z = z_r = z_r(n) := \tfrac{r}{\delta} \log n,
\end{equation}
where $\delta$ is as in Lemma~\ref{lem:Q:binomial:mean-and-tail}. 
Suppose that $M(j)$, $N(j)$ are random variables satisfying
\begin{subequations}
  \begin{alignat}{3}
    n-j    &\le N(j) &&\le n+j    &\quad&\text{for all $j$,}\\
    0      &\le M(j) &&\le m      &&     \text{for all $j$,}
    \intertext{with probability one, and }  %
    m^-    &\le M(j) &&\le m^+    &&     \text{for all $j=1,\dots, z$}
  \end{alignat}
  with probability at least $1-O(n^{-r})$.
\end{subequations}
Let the $B(j)$ be distributed as $\Bin( M(j), \frac{P}{N(j)} )$ for all $j$.  More accurately, we assume that there is an iid family of $P(j)$,
$j=1,2,3,\dots$, distributed as $P$ above, and an independent family of random variables $U(j,i)$, $j=1,2,3,\dots$, $i=1,2,3,\dots$ each having uniform
distribution on $[0,1]$, and that the joint distribution of the $B(j)$ is the same as for the family of sums
\begin{equation}\label{eq:Q:dBin-coupling}
  \sum_{i=1}^{M(j)} \Ind\Bigl[ U(j,i) \le \tfrac{P(j)}{N(j)} \Bigr].
\end{equation}
The $P(j)$ and $U(j,i)$ are assumed to be jointly independent, but we make no assumptions about independence regarding the $M(j)$ and $N(j)$ among themselves or
from the $U(j,i)$ and $P(j)$.  However, we do assume that $a$, the $M(j)$, and the $N(j)$ are such that
\begin{equation}\label{eq:Q:ub-total-no-customers}
  a + \sum_{j=1}^\infty B(j) = O(n)
\end{equation}
holds with probability one.

\begin{lemma}\label{lem:Q:final}
  Let $\lambda^\pm = \lambda^\pm(n) := \frac{m^\pm}{n-(\pm z)}$, and suppose $z \ge C a$.  If
  \begin{equation}\label{eq:Q:final:cond-ub-eps-good}
    \lambda^+ \le (1-\eps)\tfrac{12}{13},
  \end{equation}
  then with the $\delta$ and $C$ from Lemma~\ref{lem:Q:binomial:mean-and-tail}, the following holds for large enough $n$:
  \begin{subequations}
    \begin{alignat}{3}
      \frac{a}{1-\frac{13}{12}\lambda^-} - O(n^{1-r}) &\le&\,& \quad\Exp Z
      &&\le \frac{a}{1-\frac{13}{12}\lambda^+}    + O(n^{1-r});                                                 \label{eq:Q:dBin:mean}\\
      \intertext{and for all $\alpha \ge C a$} 
      &&&\Prb[ Z \ge \alpha ] &&\le e^{-\delta\alpha } + O(n^{-r}).                                             \label{eq:Q:dBin:tail}
    \end{alignat}
  \end{subequations}
\end{lemma}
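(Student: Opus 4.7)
The plan is to couple $Z$ with two reference iid-binomial busy periods $Z^+$ and $Z^-$, built from the same randomness $\{P(j),U(j,i)\}$ underlying~\eqref{eq:Q:dBin-coupling}. Concretely, I set
\[
  B^+(j) := \sum_{i=1}^{m^+} \Ind\bigl[U(j,i)\le \tfrac{P(j)}{n-z}\bigr],
  \qquad
  B^-(j) := \sum_{i=1}^{m^-} \Ind\bigl[U(j,i)\le \tfrac{P(j)}{n+z}\bigr],
\]
so that the $B^\pm(j)$ are iid $\Bin\bigl(m^\pm,\,P(j)/(n\mp z)\bigr)$ and the corresponding busy periods $Z^\pm$ fall under Lemma~\ref{lem:Q:binomial:mean-and-tail} with rates $\lambda^\pm$; assumption~\eqref{eq:Q:final:cond-ub-eps-good} is exactly what is needed to invoke that lemma for the upper chain, and since $\lambda^-\le\lambda^+$ it is automatically inherited by the lower chain.

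Let $\mathcal{G}$ be the ``good event'' that $m^-\le M(j)\le m^+$ holds for every $j\le z$; by hypothesis $\Prb[\mathcal{G}^c]=O(n^{-r})$, while $n-z\le N(j)\le n+z$ is automatic for $j\le z$. On $\mathcal{G}$ the coupling gives the pointwise domination $B^-(j)\le B(j)\le B^+(j)$ for every $j\le z$. A standard pathwise argument for discrete queues (the difference $Q-Q^\pm$ is monotone while both queues are positive) then yields
\[
  \{Z^+\le z\}\cap\mathcal{G}\;\subseteq\;\{Z\le Z^+\},
  \qquad
  \{Z^-\le z\}\cap\mathcal{G}\;\subseteq\;\{Z\ge Z^-\}.
\]

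For the tail bound, fix $\alpha\ge Ca$. If $\alpha\le z$, then
\[
  \Prb[Z\ge\alpha]\;\le\;\Prb[Z^+\ge\alpha]+\Prb[Z^+>z]+\Prb[\mathcal{G}^c]\;\le\;e^{-\delta\alpha}+O(n^{-r}),
\]
where $\Prb[Z^+>z]\le e^{-\delta z}=n^{-r}$ follows from the choice $z=(r/\delta)\log n$ and Lemma~\ref{lem:Q:binomial:mean-and-tail} (applicable since $z\ge Ca$). For $\alpha>z$ the same bound is trivial, because $e^{-\delta\alpha}\le n^{-r}$ and $\Prb[Z\ge\alpha]\le\Prb[Z>z]=O(n^{-r})$ via the same coupling. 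For the mean, I split $\Exp Z$ according to $\mathcal{G}\cap\{Z^+\le z\}$ and its complement: the first contributes at most $\Exp Z^+=a/(1-\tfrac{13}{12}\lambda^+)$ since $Z\le Z^+$ there, while the complement has probability $O(n^{-r})$ and, combined with the a.s.\ ceiling $Z\le a+\sum_j B(j)=O(n)$ from~\eqref{eq:Q:ub-total-no-customers}, contributes $O(n^{1-r})$. The lower bound is analogous: restrict to $\mathcal{G}\cap\{Z^-\le z\}$, use $Z\ge Z^-$ there, and account for the missing pieces via $\Exp[Z^-;\,Z^->z]\le\sum_{\alpha>z}e^{-\delta\alpha}=O(n^{-r})$ together with $z\cdot\Prb[\mathcal{G}^c]=O(n^{-r}\log n)$.

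The main obstacle is that $M(j)$ and $N(j)$ may depend on the same $U(j',i)$ and $P(j')$ that determine earlier $B(j')$, so a black-box stochastic-domination lemma is not directly available. What keeps the coupling legal is that the representation~\eqref{eq:Q:dBin-coupling} is valid pathwise regardless of how $(M(j),N(j))$ are chosen: once $M(j)\in[m^-,m^+]$ and $N(j)\in[n-z,n+z]$, the sandwich $B^-(j)\le B(j)\le B^+(j)$ holds deterministically from the specific $U(j,i)$ and $P(j)$ that are drawn, and the queue-level comparison only needs this domination to hold for $j\le z$, which is exactly what $\mathcal{G}$ supplies.
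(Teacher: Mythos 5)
Your proof is correct and mirrors the paper's own argument almost exactly: both define the same sandwiching iid variables $B^\pm(j)$, invoke Lemma~\ref{lem:Q:binomial:mean-and-tail} for the resulting busy periods $Z^\pm$, and split on the good event where the domination and $Z^\pm\le z$ hold, absorbing the bad event via the a.s.\ bound $Z=O(n)$. (One microscopic slip: $\Exp[Z^-;\,Z^->z]$ also picks up a $z\,\Prb[Z^->z]=O(n^{-r}\log n)$ term beyond $\sum_{\alpha>z}e^{-\delta\alpha}$, but this is still $O(n^{1-r})$ and does not affect the conclusion.)
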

The proof can be found in the appendix: \ref{apx:Q:final}.

\begin{remark}\label{rem:delta-le-1}
  There is no danger in assuming $\delta \le 1$ and $C \ge 1$, and we will do that from this point on.
\end{remark}



\newcommand{\QI}{Q_{\mspace{-1mu}I}}
\newcommand{\QII}{Q_{\mspace{-1mu}I\!I}}
\newcommand{\ZI}{{Z_{\mspace{-1mu}I}}}
\newcommand{\ZII}{Z_{\mspace{-1mu}I\!I}}
\newcommand{\AI}{A_{\mspace{-1mu}I}}
\newcommand{\AII}{A_{\mspace{-1mu}I\!I}}
\newcommand{\BI}{B_{\mspace{-1mu}I}}
\newcommand{\BII}{B_{\mspace{-1mu}I\!I}}

\newcommand{\MI}{M_{\mspace{-1mu}I}}         
\newcommand{\MII}{M_{\mspace{-1mu}I\!I}}

\newcommand{\FI}{F_{\mspace{-1mu}I}}         
\newcommand{\FII}{F_{\mspace{-1mu}I\!I}}

\newcommand{\HI}{H_{\mspace{-1mu}I}}       
\newcommand{\HItwo}{H^{\ge2}_{\mspace{-1mu}I}}
\newcommand{\HII}{H_{\mspace{-1mu}I\!I}}

\newcommand{\DX}{{\Delta\!X}}
\newcommand{\DY}{{\Delta\!Y}}

\section{The inner loop}\label{sec:inner}

Here we analyze Algorithm~\ref{algo:3isat:inner}.  Conditioning on $X(t)$, $Y_2(t)$, and $Y_3(t)$, we
analyze the changes of the parameters $X$, $Y_2$, and $Y_3$ during the $(t+1)$st run of the inner loop, and
bound the probability that an empty clause is generated.

From now on, $n$ and $m$ denote the number of variables and clauses, respectively, in the initial random CNF formula, with $m = cn$ for some constant $c$.  We assume
$c\le 10$, to get rid of some of the letter $c$ in the expressions below.
For any $\eps >0$, we say that $(x,y_2,y_3)\in\RR^3$ is $\eps$-good, if
\begin{equation}
  \begin{aligned}
    \eps n &< x& 
    &\text{ and }&\frac{y_2}{x} &< (1-\eps)\frac{12}{13},
  \end{aligned}  
\end{equation}
and that $\Histt$ is \textit{$\eps$-good} if $(X(t), Y_2(t), Y_3(t))$ is $\eps$-good.


\subsection{Setup of the queues for Phases I and~II}

We now define the queues corresponding to the Phases I and~II.  We will suppress the dependency of the random processes on $\Histt$ in the notation.

We define the queues $\QI$ and $\QII$ for the Phases I and~II, respectively, by modifying Algorithm~\ref{algo:3isat:inner} a little bit.  We will then analyze (the
original) Algorithm~\ref{algo:3isat:inner} with the help of the queues $\QI$ and~$\QII$ defined via this modification.  The changes we make are the following: replace
step~\refinner{algstep:inner:current-var} by
\begin{quote}
  (\refinnernoparen{algstep:inner:current-var}')\ \ If there are unused variables left, choose one uar;
\end{quote}
and step~\refinner{algstep:inner:expose-occ-colored} by
\begin{quote}
  (\refinnernoparen{algstep:inner:expose-occ-colored}')\ \ Expose all occurrences of the current variable $\avar_j$ in clauses colored with a color different from red;
\end{quote}
moreover, in the modification, we do not initiate a repair (since that would kill the queueing process).

Since, with these modifications, red clauses can contain used variables, it is possible to run out of variables before running out of clauses.  It can be easily
verified that this can only happen when all clauses are red.  Hence, in this situation, the modified algorithm will just eat up the red clauses one per iteration.

In the Phase-I queue $\QI$, the number of customers arriving in the first time interval, $\AI$, is the number of red clauses generated by setting the free variable
$\avar_0$ (tentatively) to $\nfrac12$.  Thus, $\AI$ is distributed as $\Bin(Y_2(t),\frac{1}{X(t)})$.  For the iterations $j=1,2,3,\dots$, we find that $\BI(j+1)$ is
the number of uncolored 2-clauses which become red, plus the number of pink 3-clauses which become red, when setting the current variable $\avar_j$ (tentatively) to
$\bar x(\anI_j)$.  Thus, if we denote by $Y'_2(j)$ the number of uncolored 2-clauses plus the number of pink 3-clauses at the beginning of iteration $j$, then
conditioned on $Y'_2(j)$, the distribution of $\BI(j+1)$ is that of $\Bin(Y'_2(j),\frac{P(j+1)}{X(t)-j})$, where as in the previous section, the $P(j+1)$ are iid
random variables distributed as $P$ defined in~\eqref{eq:Q:def-P}.  If we agree on the convention that a $\Bin(0,\nfrac{p}{0})$-variable is deterministically $0$,
this also holds when the queue runs out of variables.

In the Phase-II queue, the number of customers arriving in the first time interval, $\AII$, is the number of unit-clauses generated at the end of Phase~I by setting
the variables to their tentative values.  The $\BII(j)$ are defined analogous to the $\BI(j)$.

At this point, note that the condition~\eqref{eq:Q:ub-total-no-customers}, which is needed for Lemma~\ref{lem:Q:final}, is satisfied for both queues.

\subsection{Bounds for the probabilities of some essential events}

Below, we repeatedly use the following simple Chernoff-type inequality (e.g.~equation~(2.11) in~\cite{JansonLuczakRucinskiBk}): if $U$ is a binomially distributed
random variable with mean $\mu$, then
\begin{equation}\label{eq:simple-chernoff}
  \Prb[ U \ge \alpha ] \le e^{-\alpha}\qquad\text{ for $\alpha \ge 7\mu$}.
\end{equation}

\begin{lemma}\label{lem:inner:X-Y-dont-change-much}
  Let $r > 1$, $1\le z=z(n) = o(n)$ an integer, $(x,y_2,y_3)$ $\eps$-good for some $\eps > 0$, and $m^- := \max(0,y_2 - r z \log n)$, $m^+:= y_2 + r z \log n$.
  For both phases I and~II of the inner loop, the following is true.
  If, at the beginning of the phase at step~\refinner{algstep:inner:start}, there are $x$ variables, $y_2$ 2-clauses, and $y_3$ 3-clauses, then the probability
  that, while dealing with the first $z$ variables in the phase, the number of 2-clauses leaves the interval $[m^-,m^+]$, is $O( n^{-r} )$.
\end{lemma}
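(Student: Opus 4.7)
The plan is to bound the per-iteration change of $Y_2$ by a random variable of $O(1)$ mean and apply a Chernoff-type tail bound to the cumulative change, with a stopping-time truncation handling the dependence between iterations.

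First, I would note that in any iteration $j$ of the inner loop, exposing the current variable $\avar_j$ only alters the status of clauses containing $\avar_j$. Writing $D(j)$ for the number of 2-clauses plus 3-clauses containing $\avar_j$, we have $|Y_2(j)-Y_2(j-1)| \le D(j)$: an uncolored 2-clause or a pink 3-clause containing $\avar_j$ leaves the ``effective 2-clause'' count, while an uncolored 3-clause whose $\avar_j$-literal evaluates to false joins it. Consequently, if $\sum_{j=1}^{z} D(j) \le rz\log n$, then $Y_2$ remains within $[m^-,m^+]$ throughout the first $z$ iterations.

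Next, introduce the stopping time $\tau := \min\{j\ge 1 \mid Y_2(j)\notin[m^-,m^+]\}$ and the truncated quantities $\widetilde D(j) := D(j)\cdot\Ind[\tau\ge j]$. On the event $\{\tau\ge j\}$ we have $Y_2(j-1)\le m^+$, $Y_3(j-1)\le y_3$ (since $Y_3$ is non-increasing during the inner loop), and $X(j-1)\ge x-z$. By Lemma~\ref{lem:3isat:uar} the uncolored part of the formula is uar conditional on the history, and the current variable $\avar_j$ is uniformly distributed among the unused variables; the standard exposure argument then yields that $\widetilde D(j)$ is stochastically dominated by $\Bin(m^+ + y_3,\, 3/(x-z))$, whose mean is $O(1)$ in view of $y_2/x,\,y_3/x=O(1)$ and $z=o(n)$. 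Coupling these dominating binomials to be independent across $j$, the sum $\widetilde S := \sum_{j=1}^z \widetilde D(j)$ is stochastically dominated by a single binomial of mean $O(z)$; since $rz\log n \ge 7\cdot O(z)$ for $n$ large, the Chernoff bound~\eqref{eq:simple-chernoff} gives
\begin{equation*}
  \Prb\bigl[\widetilde S \ge rz\log n\bigr] \le e^{-rz\log n} = n^{-rz} = O(n^{-r}),
\end{equation*}
using $z\ge 1$. On the complementary event, $\tau > z$ and the conclusion of the lemma follows, simultaneously for both phases.

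The main obstacle is to justify the stochastic domination step rigorously: the $D(j)$ are not independent across iterations (past exposures affect future ones), and in Phase~I the current variable for $j\ge 1$ is determined by the selected red clause rather than chosen freely. Both difficulties are handled by Lemma~\ref{lem:3isat:uar}---which preserves the uar conditional distribution of the uncolored formula throughout the inner loop---combined with the stopping-time truncation, which caps the effective size of the formula by $m^+$, $y_3$, and $x-z$ up to the moment the event in question would occur.
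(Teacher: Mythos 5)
Your proposal is correct in spirit but takes a genuinely different route from the paper. The paper's proof is a one-liner in comparison: it observes that the per-iteration change of the 2-clause count is stochastically dominated by a $\Bin(m,\nfrac{6}{\eps n})$ variable (the denominator $\eps n/2$ covers the $o(n)$ used variables), applies a union bound over the $z$ iterations for the event that some per-iteration change exceeds $r\log n$, and concludes, since ``all $z$ changes $\le r\log n$'' forces the cumulative deviation to stay below $zr\log n = m^+ - y_2$. You instead sum the per-iteration changes and apply the Chernoff bound~\eqref{eq:simple-chernoff} to the cumulative sum, which requires dominating the whole sequence by iid binomials rather than just each marginal, but buys you a sharper bound ($n^{-rz}$ instead of $n^{-r}$, irrelevant here). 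Both routes work.

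Two concrete remarks on your version. First, the choice of $m^+ + y_3$ as the trial count is both unnecessary and, for the full range $z = o(n)$, insufficient: $m^+ = y_2 + rz\log n$ can be of order $n\log n$ when $z$ is close to $n/\log n$, so the claimed $O(1)$ mean of $\Bin(m^+ + y_3,\, 3/(x-z))$ fails there. The cleaner and correct choice is simply $m$ (or $y_2+y_3$): during the inner loop clauses are only \emph{colored}, never created or deleted, so the total number of 2- and 3-clauses that $\avar_j$ can possibly hit is bounded by $y_2+y_3 \le m$ deterministically, giving mean $\le 3m/(x-z) = O_\eps(1)$ for all $z=o(n)$, exactly as in the paper. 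Second---and this is a consequence of the first point---the stopping-time truncation is superfluous: the domination $D(j)\preceq\Bin(m,3/(x-z))$ holds unconditionally, because the trial count does not depend on $Y_2(j-1)$ at all, so the implication $\{\tau\le z\}\subseteq\{\sum_{j\le z} D(j) > rz\log n\}$ can be read off directly without introducing $\widetilde D(j)$.
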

\begin{proof}
  For the upper bound $m^+$, the probability that the number of 2-clauses exceeds $m^+$ is bounded from
  above by the probability that one in a sequence of~$z$ independent random variables with
  $\Bin(m,\frac{3}{(\eps n/2)})$-distributions is greater than $r \log n$.  Here the factor $\nfrac12$ on the
  denominator takes care of the $z = o(n)$ variables which are used.  For $n$ large enough, this
  probability, up to a constant factor, is at most
  \begin{multline*}
    z\binom{m}{r\log n} \lt(  \frac{\nfrac{6}{\eps}}{n}  \rt)^{r\log n}
    \le
    z\lt( e\frac{m}{r\log n} \cdot  \frac{\nfrac{6}{\eps}}{n}  \rt)^{r\log n}
    \\
    = 
    z\lt( \frac{e \tfrac{m}{n}\cdot\tfrac{6}{\eps}}{r\log n} \rt)^{r\log n}
    \le 
    z\lt( \nfrac{1}{e^2} \rt)^{r\log n}
    =
    z n^{-2r}
    \le 
    n^{-r}
  \end{multline*}
  For the lower bound $m^-$, the probability can be bounded by the same argument, noting that, if $m^-=0$,
  the corresponding probability is 0.
\end{proof}

Let $R$ denote the event that a repair is invoked during this run of Algorithm~\ref{algo:3isat:inner}.
Moreover, denote by $\ZI$ and $\ZII$ the length of the first busy period of the Phase~I and Phase~II queues, respectively.  Note that they depend on $\AI$ and $\AII$,
respectively.
Further let $\MI$ and $\MII$ be the total number of colored clauses which are generated during Phase~I and Phase~II, respectively; let $\HI$ and $\HII$ the event
that, in some iteration, in steps~\refinner{algstep:inner:expose-occ-colored}, the current variable is found to be contained in a colored clause (other than the
current clause $\aclause_j$); and by $\HItwo$ the probability that in Phase~I the current variable is found to be contained in at least two colored clauses (other
than the current clause $\aclause_j$).

\begin{lemma}\label{lem:inner:probabs-list}
  Suppose that $\Histt$ is $2\eps$-good.
With the $\delta := \delta(\eps)$ and $C := C(\eps)$ from Lemma~\ref{lem:Q:final}, and $r > 1$, the following is true for all $n$ large enough (depending on
  $\eps$).
  \begin{subequations}
    \begin{align}
      \Prb[ \AI \ge r \log n \mid \Histt ] &= O(n^{-r})                                      \label{eq:inner:probabs-list:AI}\\
      \Prb[ \ZI \ge \tfrac{C}{\delta} r\log n  \mid \Histt ] &= O(n^{-r})                    \label{eq:inner:probabs-list:ZI}\\
      \Prb[ \MI \ge \tfrac{500 C}{\eps\delta} r\log n  \mid \Histt ]  &= O(n^{-r})           \label{eq:inner:probabs-list:MI}\\
      \Prb[ \HI \mid \Histt ]  &= O_\eps(\tfrac{\log^2 n}{n})                                      \label{eq:inner:probabs-list:HI}\\
      \Prb[ \HItwo \mid \Histt ]  &= O_\eps(\tfrac{\log^4 n}{n^2})                                      \label{eq:inner:probabs-list:HItwo}\\
      \Prb[ R   \mid \Histt ]  &= O_\eps(\tfrac{\log^2 n}{n})                                      \label{eq:inner:probabs-list:R}\\
      \Prb[ \AII \ge  \tfrac{500 C}{\eps\delta} (r+1)\log n \mid \Histt \aand R] &= O(n^{-r})            \label{eq:inner:probabs-list:AII}\\
      \Prb[ \ZII \ge \tfrac{500 C^2}{\eps\delta} (r+1)\log n \mid \Histt \aand R] &= O(n^{-r})           \label{eq:inner:probabs-list:ZII}\\
      \Prb[ \MII \ge \tfrac{250000 C^2}{\eps^2\delta} (r+1)\log n  \mid \Histt \aand R]  &= O(n^{-r})    \label{eq:inner:probabs-list:MII}\\
      \Prb[ \HII \mid \Histt \aand R]  &= O_\eps(\tfrac{\log^2 n}{n})                                      \label{eq:inner:probabs-list:HII}
    \end{align}
  \end{subequations}
\end{lemma}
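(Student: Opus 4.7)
The plan is to establish the ten estimates in order, bootstrapping each one into the next. The three ingredients that recur throughout are the Chernoff-type bound~\eqref{eq:simple-chernoff}, the queueing tail estimate~\eqref{eq:Q:dBin:tail} of Lemma~\ref{lem:Q:final}, and the concentration Lemma~\ref{lem:inner:X-Y-dont-change-much} (used both to verify $\lambda^+ \le (1-\eps)\tfrac{12}{13}$ and to control the fluctuations of the queue denominators).  The ten estimates group naturally into Phase-I bounds \eqref{eq:inner:probabs-list:AI}--\eqref{eq:inner:probabs-list:R} and Phase-II bounds \eqref{eq:inner:probabs-list:AII}--\eqref{eq:inner:probabs-list:HII}; the latter reuse the arguments of the former via Lemma~\ref{lem:3isat:uar}(c).

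For \eqref{eq:inner:probabs-list:AI}, the random variable $\AI$ is $\Bin(Y_2(t),\nfrac{1}{X(t)})$ with mean at most $Y_2(t)/X(t)\le (1-2\eps)\tfrac{12}{13}<1$, so \eqref{eq:simple-chernoff} directly yields the bound.  For \eqref{eq:inner:probabs-list:ZI} I apply Lemma~\ref{lem:Q:final} to the Phase-I queue $\QI$: the $2\eps$-goodness of $\Histt$ combined with the $O(\log^2 n)$ fluctuation of $Y_2'$ afforded by Lemma~\ref{lem:inner:X-Y-dont-change-much} certifies \eqref{eq:Q:final:cond-ub-eps-good}, and the side condition $\alpha \ge C a$ follows because $\AI \le r\log n$ with probability $1-O(n^{-r})$ by (a) together with $\delta\le 1$ (Remark~\ref{rem:delta-le-1}).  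For \eqref{eq:inner:probabs-list:MI} I bound $\MI$ by the total number of 2- and 3-clauses touching any of the $\ZI+1$ Phase-I variables (free variable plus current variables), since every such clause becomes colored at step~\refinner{algstep:inner:hit-remaining-occ}.  On the event $\ZI\le \tfrac{C}{\delta}r\log n$ from (b), this sum is dominated by a binomial of mean $O_\eps(r\log n/\delta)$ (each variable has expected degree $O(1)$), and \eqref{eq:simple-chernoff} yields the $O(n^{-r})$ tail at the stated threshold.

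Estimates \eqref{eq:inner:probabs-list:HI}--\eqref{eq:inner:probabs-list:R} are union bounds across the (at most $\ZI$) iterations of Phase~I, taken conditionally on the good events of (b) and (c).  In each iteration the current variable is chosen uniformly from among $\Theta_\eps(n)$ unused variables, so it lands inside a given colored clause with probability $O_\eps(1/n)$; summing over the $O_\eps(\log n)$ colored clauses and $O_\eps(\log n)$ iterations yields \eqref{eq:inner:probabs-list:HI}.  The second-moment version, using $\binom{\MI}{2}/X(t)^2$ per iteration, gives \eqref{eq:inner:probabs-list:HItwo} (and the quoted $\log^4 n/n^2$ comfortably absorbs my $\log^3 n /n^2$).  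Since $R \subseteq \HI$, estimate \eqref{eq:inner:probabs-list:R} is immediate from (d).

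The Phase-II bounds follow by replaying (a)--(f) in Phase~II, using Lemma~\ref{lem:3isat:uar}(c) to guarantee that, conditionally on its statistics, the formula at the start of Phase~II is uniformly random.  The deterministic relation $\AII \le \MI$ (every initial Phase-II unit clause descends from a 2-clause containing a Phase-I variable, hence colored) feeds $\AII$ into the Phase-II queue in the role of the initial customer count $a$, and once (g) is in hand, (h) reuses the Lemma~\ref{lem:Q:final} argument of (b), (i) the degree count of (c), and (j) the union bound of (d); the numerical constants $500, 500C^2, 250000C^2$ arise from the cumulative factors of $C/(\eps\delta)$ at each stage.  The main obstacle I anticipate is translating a joint-probability estimate into the conditional-on-$R$ form demanded by \eqref{eq:inner:probabs-list:AII}--\eqref{eq:inner:probabs-list:HII}: since we only know $\Prb[R\mid\Histt]=O(\log^2 n/n)$, a naive division would lose a factor $\Omega(n/\log^2 n)$.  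I plan to offset this either by invoking (c) with the parameter $r$ replaced by $r+1$ (so that the Phase-I estimate is tightened by an extra factor of $n$, overwhelming the loss from conditioning), or equivalently by proving the stronger joint statement $\Prb[\AII \ge \tfrac{500C}{\eps\delta}(r+1)\log n,\,R\mid\Histt]=O(n^{-(r+1)})$ and plugging it into the Phase-II queue analysis without ever explicitly conditioning.
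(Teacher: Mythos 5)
Your proposal follows essentially the same route as the paper's proof: the three recurring ingredients (the Chernoff estimate~\eqref{eq:simple-chernoff}, Lemma~\ref{lem:Q:final} via Lemma~\ref{lem:inner:X-Y-dont-change-much}, and union bounds over the $O_\eps(\log n)$ iterations) are exactly what the paper uses, in the same bootstrap order $\AI \Rightarrow \ZI \Rightarrow \MI \Rightarrow (\HI,\HItwo,R)$ and then again in Phase~II, and the $r\mapsto r+1$ tightening is precisely how the paper handles the conditioning on $R$ in \eqref{eq:inner:probabs-list:AII}--\eqref{eq:inner:probabs-list:HII}. One small caution: your parenthetical ``or equivalently by proving the joint statement'' is not literally equivalent to the stated conditional bounds (converting a joint bound into a conditional one still requires a lower bound on $\Prb[R\mid\Histt]$, which is also left implicit in the paper's division step), though either form suffices for the downstream use of the lemma where the conditional probabilities are always multiplied back by $\Prb[R\mid\Histt]$.
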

\begin{proof}\mbox{}
  \paragraph{\textit{For~\eqref{eq:inner:probabs-list:AI},}}  
  if $\Histt$ is $2\eps$-good, then the probability that $\AI \ge r \log n$ is bounded from above by the probability that a $\Bin(m,\frac{2}{2\eps n})$-variable is
  larger than $r\log n$, which is at most $n^{-r}$, for $n$ large enough, by~\eqref{eq:simple-chernoff}.

  \paragraph{\textit{Proof of~\eqref{eq:inner:probabs-list:ZI}.}}  
  We use Lemma~\ref{lem:Q:final} together with Lemma~\ref{lem:inner:X-Y-dont-change-much} to bound the conditional probability that $\ZI \ge \alpha$.  If $\Histt$
  is $2\eps$-good, then the $m^+$ from Lemma~\ref{lem:inner:X-Y-dont-change-much}, with $x:=X(t)$, $y_2:=Y_2(t)$, $y_3:=Y_3(t)$, and the $z=z_r$
  from~\eqref{eq:Q:z}, is such that~\eqref{eq:Q:final:cond-ub-eps-good} is satisfied if $n$ is large enough depending on $\eps$.

  The requirement for the estimate in~\eqref{eq:Q:dBin:tail} is that $\AI \le a_0 := \min(\nfrac{\alpha}{C},\nfrac{z_r}{C})$.  Thus, for the probabilities
  conditional on $\Histt$, we have
  \begin{multline*}
    \Prb[ \ZI \ge \alpha ]
    \\
    =
    \Prb[ \ZI \ge \alpha \mid \AI \le a_0 ]\Prb[\AI \le a_0]
    + \Prb[ \ZI \ge \alpha \mid \AI > a_0 ]\Prb[\AI > a_0]
    \\
    \le O(e^{-\delta\alpha}) + O(n^{-r}) + \Prb[\AI > a_0].
  \end{multline*}
  With $\alpha := \frac{C}{\delta}r\log n$, using~\eqref{eq:inner:probabs-list:AI} and~\eqref{eq:simple-chernoff}, the right-hand side is $O(n^{-r})$.
  
  \paragraph{\textit{Proof of~\eqref{eq:inner:probabs-list:MI}.}}  
  For every iteration, a clause is only colored if the current variable of the iteration is contained in the clause.  Hence, the number of clauses colored in the
  first $j$ iterations is upper bounded by the sum of $j$ independent $\Bin(m,\frac{3}{\eps n})$-variables.  Hence, the probability that in the first $j$ iterations,
  the number of colored clauses exceeds $j\alpha$ is at most $e^{-\alpha}$ by~\eqref{eq:simple-chernoff}, provided that $\alpha \ge \frac{500}{\eps} j \ge 7\cdot
  \frac{3m}{\eps n/2}j$.  Moreover, we have $\MI \le m$ with probability one.  Thus, conditioning on $\Histt$ (and keeping in mind that $\Histt$ is required to be
  $2\eps$-good), the probability that $\MI$ is larger than $\frac{500 C}{\eps\delta} r\log n$ is at most
  \begin{equation*}
    O(e^{-r\frac{500 C}{\eps\delta} \log n})+ m\Prb[ \ZI \ge r\tfrac{500 C}{\eps \delta} \log n \mid \Histt ]
    = O(n^{-r}) + O(m n^{-500 r})
    = O(n^{-r}).
  \end{equation*}

  \paragraph{\textit{Proofs of~\eqref{eq:inner:probabs-list:HI} and~\eqref{eq:inner:probabs-list:HItwo}.}} 
  In the first phase, in the $j$th iteration, the probability that the current variable $\avar_j$ occurs in a colored clause (other than the current clause
  $\aclause_j$) is $O(\frac{\MI}{X(t)-\ZI})$, and the probability that the number of colored clauses containing $\avar_j$ (other than the current one $\aclause_j$) is
  two or more is $O\bigl( \bigl(\frac{\MI}{X(t)-\ZI}\bigr)^2 \bigr)$.

  By~\eqref{eq:inner:probabs-list:ZI} and~\eqref{eq:inner:probabs-list:MI}, we can bound the probability that this happens in the first $\ZI$ iterations by
  $O_\eps(\frac{\log^2 n}{n}) + O(n^{-r})$ and $O_\eps(\frac{\log^4 n}{n^2}) + O(n^{-r})$, respectively, where the constant in the $O_\eps(\cdot)$ depends only on
  $\eps$.

  \paragraph{\textit{Proof of~\eqref{eq:inner:probabs-list:R}.}} 
  Clearly, the probability that a repair occurs is at most the probability that, in some iteration, the current variable $\avar_j$ occurs in a colored clause
  (other than the current one $\aclause_j$).  Thus, the inequality follow from~\eqref{eq:inner:probabs-list:HI}.

  \paragraph{\textit{Proof of~\eqref{eq:inner:probabs-list:AII}.}} 
  Since $\AII \le \MI$, this inequality follows from \eqref{eq:inner:probabs-list:MI} and \eqref{eq:inner:probabs-list:R}, with $r$ replaced by $r+1$, by conditioning
  on $R$:
  \begin{multline*}
    \Prb[ \MI \ge \tfrac{500 C}{\eps\delta} (r+1)\log n  \mid \Histt \aand R]
    \\
    \le
    \Prb[ \MI \ge \tfrac{500 C}{\eps\delta} (r+1)\log n  \mid \Histt ] / \Prb[ R \mid \Histt ]
    \\
    =
    O(n^{-r-1} \tfrac{n}{\log^2 n})
    = O(n^{-r}).
  \end{multline*}

  \paragraph{\textit{Proof of~\eqref{eq:inner:probabs-list:ZII}.}} 
  We now apply Lemmas \ref{lem:Q:final} and~\ref{lem:inner:X-Y-dont-change-much} to the Phase-II queue.  Let $r':=\frac{500C^2}{\eps\delta}(r+1)$.  If $\Histt$ is
  $2\eps$-good, then the $m^+$ from Lemma~\ref{lem:inner:X-Y-dont-change-much}, with $x:=X(t)$, $y_2:=Y_2(t)$, $y_3:=Y_3(t)$, and the $z=z_{r'}$
  from~\eqref{eq:Q:z}, is such that~\eqref{eq:Q:final:cond-ub-eps-good} is satisfied if $n$ is large enough depending on $\eps$.

  Again, the requirement for the estimate in~\eqref{eq:Q:dBin:tail} is that $\AII \le a'_0 := \min(\nfrac{\alpha}{C},\nfrac{z_{r'}}{C})$.  Thus, for the
  probabilities conditional on $\Histt \aand R$, we have
  \begin{multline*}
    \Prb[ \ZII \ge \alpha ]
    \\
    =
    \Prb[ \ZII \ge \alpha \mid \AII \le a'_0 ]\Prb[\AII \le a'_0]
    + \Prb[ \ZII \ge \alpha \mid \AII > a'_0 ]\Prb[\AII > a'_0]
    \\
    \le O(e^{-\delta\alpha}) + O(n^{-r'}) + \Prb[\AII > a'_0]
  \end{multline*}
  With $\alpha := \tfrac{500 C^2}{\eps\delta} (r+1)\log n$, we have $a'_0 = \tfrac{500 C}{\eps\delta} (r+1)\log n$, so that, by~\eqref{eq:inner:probabs-list:AII}, the
  probability that $\AII > a'_0$ is $O(n^{-r})$.  In total, we obtain an upper bound of $O(n^{-r})$ for the probability that $\ZII \ge \tfrac{500 C^2}{\eps\delta}
  (r+1)\log n$.
  
  \paragraph{\textit{Proof of~\eqref{eq:inner:probabs-list:MII}.}}  
  For every iteration, a clause is only colored if the current variable of the iteration is contained in the clause.  Hence, the number of clauses colored in the
  first $j$ iterations is upper bounded by the sum of $j$ independent $\Bin(m,\frac{3}{\eps n/2}$-variables.  (The factor of $\nfrac12$ in the denominator is to take
  care of the fact that the number of variables, while starting with at least $\eps n$, might drop below $\eps n$ during the run of Phase~I or Phase~II.)  Hence, the
  probability that in the first $j$ iterations, the number of colored variables exceeds $j\alpha$ is at most $e^{-\alpha}$ by~\eqref{eq:simple-chernoff}, provided
  that $\alpha \ge \frac{500}{\eps} j \ge 7\cdot \frac{3m}{\eps n/2}j$.  Moreover, we have $\MII \le m$ with probability one.  Thus, conditioning on $\Histt \aand R$
  (and keeping in mind that $\Histt$ is $2\eps$-good), the probability that $\MII$ is larger than $\frac{500^2 C^2}{\eps^2\delta}(r+1)\log n$ is at most
  \begin{multline*}
    O(e^{-\frac{500^2 C^2}{\eps^2\delta}(r+1)\log n})+ m\Prb[ \ZII \ge \tfrac{500^2 C^2}{\eps^2\delta}(r+1)\log n \mid \Histt ]
    \\
    = O(n^{-r}) + O(m n^{-500 r})
    = O(n^{-r}).
  \end{multline*}
  
  \paragraph{\textit{Proof of~\eqref{eq:inner:probabs-list:HII}.}}  
  In the second phase, in the $j$th iteration, the probability that the current variable $\avar_j$ occurs in a colored clause (other than the current one
  $\aclause_j$) is $O(\frac{\MI}{X(t)-\ZI})$.  By~\eqref{eq:inner:probabs-list:ZII} and~\eqref{eq:inner:probabs-list:MII}, we can bound the probability that this
  happens in the first $\ZII$ iterations by $O_\eps(\frac{\log^2 n}{n}) + O(n^{-r})$, where the constant in the $O_\eps(\cdot)$ depends only on $\eps$.
\end{proof}

\subsection{Changes of the parameters $X(t)$, $Y_2(t)$, and $Y_3(t)$}

We now move to study the differences between successive values of these parameters, and we start with $X(t+1)-X(t)$.
Denote by $\FI$ and $\FII$ the number of iterations of the inner loop in the first and second phase, respectively.
Clearly, $X(t) - X(t+1) = 1 + \FI + \FII$, where the leading $1$ accounts for the free variable $\avar_0$.  Moreover, we have $\FI \le \ZI$ and $\FII \le \ZII$, and
the inequality can be strict for two reasons: in Phase~I, a repair can occur, thus terminating the phase before $\QI$ drops to zero; in both phases a red clause can
vanish (i.e.~become black) in~\refinner{algstep:inner:red1cl}.  However, note that 
\begin{equation}\label{eq:inner:FInoR-equal-ZI}
  \begin{aligned}
    \FI                      &= \ZI                             &&\text{ with probability $1-O_\eps(\tfrac{\log^2 n}{n})$, and} \\
    \FI \Ind[\widebar R ]    &\ge \ZI\Ind[\widebar R ] -1       &&\text{ with probability $1-O_\eps(\tfrac{\log^4 n}{n^2})$}
  \end{aligned}
\end{equation}
by~\eqref{eq:inner:probabs-list:HI}, \eqref{eq:inner:probabs-list:R} and~\eqref{eq:inner:probabs-list:HItwo}.

Let us abbreviate
\begin{equation*}
  \DX := -1 - \frac{ \frac{Y_2(t)}{X(t)} }{ 1- \frac{13Y_2(t)}{12 X(t)} }
  = -1 - \frac{ 12Y_2(t) }{ 12X(t) - 13Y_2(t) }
  = - \frac{ 12X(t) - Y_2(t) }{ 12X(t) - 13Y_2(t) }.
\end{equation*}

\begin{lemma}\label{lem:inner:cond-change-X}
  If $\Histt$ is $2\eps$-good and $n$ large enough depending on $\eps$, then
  \begin{subequations}\label{eq:inner:cond-change-X}
    \begin{align}
      \Bigl| -1-\DX  -  \Exp\bigl( \ZI \bigm| \Histt \bigr)  \Bigr|        &= O_\eps(\tfrac{\log   n}{n})  \label{eq:inner:cond-change-X:ZI}\\
      \Bigl| \DX    -  \Exp\bigl( X(t+1) - X(t) \bigm| \Histt \bigr) \Bigr|&= O_\eps(\tfrac{\log^4 n}{n})  \label{eq:inner:cond-change-X:exp}\\
      \intertext{and}
      \Prb\biggl[ \bigl| X(t+1) - X(t) \bigr| \ge \log^2n \biggm|\Histt\biggr]&= O(n^{-10})                 \label{eq:inner:cond-change-X:tail}
    \end{align}
  \end{subequations}
\end{lemma}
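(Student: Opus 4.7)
\smallskip
\noindent\textbf{Plan.} The central identity $X(t)-X(t+1) = 1+\FI+\FII$ reduces all three claims to estimates of the Phase-I length $\FI$ (well-approximated by the queue length $\ZI$) and the much smaller Phase-II contribution $\FII$.

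For~\eqref{eq:inner:cond-change-X:ZI} I apply Lemma~\ref{lem:Q:final} to the Phase-I queue, conditioning on $\Histt$ and on $\AI$. The hypotheses are checked as follows: $2\eps$-goodness forces $\lambda^+ \le (1-\eps)\tfrac{12}{13}$ for $n$ large (i.e.~\eqref{eq:Q:final:cond-ub-eps-good}); Lemma~\ref{lem:inner:X-Y-dont-change-much} with $z = z_r$ from~\eqref{eq:Q:z} and a sufficiently large $r$ supplies the bracketing $M(j) \in [m^-, m^+]$ with slack probability $O(n^{-r})$; and the side condition $z \ge C\,\AI$ fails only on an event of probability $O(n^{-r})$ by~\eqref{eq:inner:probabs-list:AI}, on which the trivial $\ZI \le n$ absorbs the contribution. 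Lemma~\ref{lem:Q:final} then delivers
\[
\Exp(\ZI \mid \Histt,\AI) \;=\; \AI \,\big/\, (1-\tfrac{13}{12}\lambda) \;+\; \text{small error},
\]
with $\lambda = Y_2(t)/X(t)$. Using $\AI \sim \Bin(Y_2(t),1/X(t))$ to get $\Exp(\AI \mid \Histt) = Y_2(t)/X(t)$, together with the algebraic identity $-1-\DX = \frac{Y_2(t)/X(t)}{1-(13/12)Y_2(t)/X(t)}$, yields the claim.

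For~\eqref{eq:inner:cond-change-X:exp}, write $\Exp(X(t+1)-X(t)\mid\Histt) = -1-\Exp(\FI\mid\Histt)-\Exp(\FII\mid\Histt)$, so it suffices to estimate the gap between $\Exp(\FI\mid\Histt)$ and $\Exp(\ZI\mid\Histt)$ (already handled by part~(a)) and then separately to bound $\Exp(\FII\mid\Histt)$. For the gap, \eqref{eq:inner:FInoR-equal-ZI} controls $\Prb[\FI\ne\ZI\mid\Histt]$; combined with the exponential tail~\eqref{eq:inner:probabs-list:ZI}, splitting $\Exp((\ZI-\FI)\Ind[\FI\ne\ZI])$ at the threshold $\tfrac{C}{\delta}r\log n$ contributes only a polylog-over-$n$ error. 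For $\FII$, which vanishes off the repair event~$R$, conditioning gives $\Exp(\FII\mid\Histt) \le \Prb[R\mid\Histt]\,\Exp(\ZII\mid \Histt \aand R)$, bounded using~\eqref{eq:inner:probabs-list:R} and~\eqref{eq:inner:probabs-list:ZII}. Part~\eqref{eq:inner:cond-change-X:tail} then follows from $|X(t+1)-X(t)| \le 1+\ZI+\ZII$ and~\eqref{eq:inner:probabs-list:ZI}, \eqref{eq:inner:probabs-list:ZII}, \eqref{eq:inner:probabs-list:R} with $r$ chosen so that $\log^2 n$ dominates $\tfrac{C}{\delta}r\log n$, and the product $\Prb[R\mid\Histt]\cdot\Prb[\ZII\ge\tfrac12\log^2 n\mid\Histt\aand R]$ is comfortably $O(n^{-10})$.

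The main obstacle is the error bookkeeping in part~\eqref{eq:inner:cond-change-X:ZI}: three contributions must be simultaneously controlled, namely the bracketing width $\lambda^+-\lambda^-$ supplied by Lemma~\ref{lem:inner:X-Y-dont-change-much}, the $O(n^{1-r})$ remainder from Lemma~\ref{lem:Q:final}, and the rare-event contribution from $\AI > z/C$. Choosing $r$ large and $z = \Theta(\log n)$, the accounting carries through; the analogous bookkeeping for $\Exp(\ZI-\FI \mid \Histt)$ in part~(b) is similar but coarser because of the extra log factors coming from~\eqref{eq:inner:FInoR-equal-ZI}.
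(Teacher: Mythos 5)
Your proposal follows essentially the same route as the paper: both use the identity $X(t)-X(t+1)=1+\FI+\FII$, apply Lemma~\ref{lem:Q:final} (with the bracketing from Lemma~\ref{lem:inner:X-Y-dont-change-much}) conditionally on $\AI$ to evaluate $\Exp(\ZI\mid\Histt)$, control the gap $\Exp(\ZI-\FI)$ via~\eqref{eq:inner:FInoR-equal-ZI} and a split at the $\Theta(\log n)$ tail threshold, bound $\Exp(\FII)$ by conditioning on $R$ using~\eqref{eq:inner:probabs-list:R} and~\eqref{eq:inner:probabs-list:ZII}, and deduce the tail bound from $|X(t+1)-X(t)|\le 1+\ZI+\ZII$. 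The bookkeeping is organized slightly differently (you track $\Exp(\ZI-\FI)$ directly where the paper writes $\FI=\ZI\Ind[\widebar R]-E_I$), but the ideas and the cited lemmas are the same.
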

\begin{proof}
  By what we have said above on the relationship between $\FI$, $\FII$ and $X(t+1)-X(t)$, we have $\FI = \ZI\Ind[\widebar R] - E_I$ and $\FII = \ZII - E_{I\!I}$, where
  $E_I$ and $E_{I\!I}$ are error terms accounting for red clauses vanishing.  We have $\Exp(E_I\mid\Histt), \Exp(E_{I\!I}\mid\Histt) = O_\eps(\frac{\log^4 n}{n})$
  by~\eqref{eq:inner:probabs-list:HI} and~\eqref{eq:inner:probabs-list:HItwo} (noting that $E_{I},E_{I\!I} \le m$).

  We compute the mean of $\ZI$ using Lemma~\ref{lem:Q:final} with the $m^\pm$ from
  Lemma~\ref{lem:inner:X-Y-dont-change-much} with $z:=\frac{r}{\delta}\log n$ as in~\eqref{eq:Q:z}.  Thus,
  letting $v := rz\log n$ (the bound from Lemma~\ref{lem:inner:X-Y-dont-change-much}), conditional on $\AI$
  and $\Histt$, we have
  \begin{equation*}
    \frac{\AI}{ 1 - \frac{13 Y_2(t)-v}{12 X(t)+z} } \le \Exp(\ZI\mid\AI\aand\Histt) \le \frac{\AI}{ 1- \frac{13 Y_2(t)+v}{12 X(t)-z} },
  \end{equation*}
  so that 
  \begin{equation*}
    \Exp(\ZI\mid\AI\aand\Histt) = \frac{\AI}{1-\frac{13 Y_2(t)}{12 X(t)} } + O_\eps(\tfrac{\AI \log n}{n}),
  \end{equation*}
  provided that $\AI \le \nfrac{z}{C}$, which holds with probability at least $1-O(n^{-2})$ by~\eqref{eq:inner:probabs-list:AI} by increasing, if necessary, $r$
  beyond~$2\delta C$.  Since $\ZI = O(n)$ with probability one, we obtain
  \begin{multline*}
    \Exp(\ZI\mid\Histt)
    = \Exp\biggl(\frac{\AI}{ 1- \frac{13 Y_2(t)}{12 X(t)} }  +O_\eps(\tfrac{\AI \log n}{n})  \biggm|\Histt\biggr) 
    \\
    = \frac{\Exp(\AI\mid\Histt)}{ 1- \frac{13 Y_2(t)}{12 X(t)} } +O_\eps(\tfrac{\log n}{n})
    = \frac{\frac{Y_2(t)}{X(t)}}{ 1- \frac{13 Y_2(t)}{12 X(t)} } +O_\eps(\tfrac{\log n}{n}),
  \end{multline*}
  which proves~\eqref{eq:inner:cond-change-X:ZI}.
  For $\FI$, we obtain
  \begin{multline*}
    \Exp(\FI\mid \Histt) = \Exp(\ZI\mid\Histt) - \Exp(\ZI\Ind(R)\mid\Histt) - \Exp(E_I\mid\Histt)
    \\
    = -1 - \DX + O_\eps(\tfrac{\log   n}{n}) - O_\eps(\log n)\Prb(R\mid\Histt) - mO(n^{-r}) - O_\eps(\tfrac{\log^4 n}{n})
    \\
    = -1 - \DX + O_\eps(\tfrac{\log^4 n}{n})
  \end{multline*}
  and
  \begin{equation*}
    \Exp(\FII\mid \Histt \aand R) \le \Exp(\ZII) = O_{\eps}(\log n) + O(n^{-r})m,
  \end{equation*}
  from which~\eqref{eq:inner:cond-change-X:exp} follows.

  Since $X(t) - X(t+1) \le 1 + \ZI + \ZII$, the tail inequality~\eqref{eq:inner:cond-change-X:tail} follows immediately from~\eqref{eq:inner:probabs-list:ZI}
  and~\eqref{eq:inner:probabs-list:ZII}.
\end{proof}

\begin{lemma}\label{lem:inner:cond-change-Y3}
  If $\Histt$ is $2\eps$-good, then
  \begin{subequations}
    \begin{align}
      \Biggl| \DX \frac{3Y_3(t)}{X(t)}  - \Exp\bigl( Y_3(t+1) - Y_3(t) \bigm| \Histt \bigr) \Biggr|&=O_\eps(\tfrac{\log^4 n}{n}) \label{eq:inner:cond-change-Y3:exp}\\
      \intertext{and}
      \Prb\biggl[ \bigl| Y_3(t+1) - Y_3(t) \bigr| \ge \log^2 n\biggm|\Histt\biggr]                  &=O(n^{-10})                  \label{eq:inner:cond-change-Y3:tail}
    \end{align}
  \end{subequations}
\end{lemma}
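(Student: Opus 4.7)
The plan is to write $Y_3(t)-Y_3(t+1)$ as the number of 3-clauses ``hit'' during iteration $(t+1)$ of the outer loop, where a clause is hit iff it contains at least one of the $|U|:=X(t)-X(t+1)=1+\FI+\FII$ variables exposed in Phases~I and~II. The expectation will then factor into (expected size of $U$) times (per-variable rate of hitting a 3-clause), with the first factor supplied by Lemma~\ref{lem:inner:cond-change-X}\eqref{eq:inner:cond-change-X:exp} and the second by the uniform-random structure of the formula. The overall template closely mirrors the mean computation in the proof of Lemma~\ref{lem:inner:cond-change-X}.

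For the mean, I would condition on $\Histt$ and $|U|=u$. By Lemma~\ref{lem:3isat:uar}(a), applied at the start of iteration $t+1$ when no clause is yet colored, the $Y_3(t)$ 3-clauses are iid uniform 3-subsets of the $X(t)$ unused variables, so the hit indicators across 3-clauses are independent once $U$ is fixed, each Bernoulli with parameter
\begin{equation*}
  p_u := 1-\binom{X(t)-u}{3}\Big/\binom{X(t)}{3} = \frac{3u}{X(t)} + O\!\left(\frac{u^2}{X(t)^2}\right),
\end{equation*}
so that the number of hits is conditionally $\Bin(Y_3(t),p_u)$ with mean $3uY_3(t)/X(t)+O(u^2Y_3(t)/X(t)^2)$. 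Taking the expectation over $|U|$, substituting $\Exp(|U|\mid\Histt) = -\DX + O_\eps(\log^4 n/n)$ from \eqref{eq:inner:cond-change-X:exp}, bounding $\Exp(|U|^2\mid\Histt) = O(\log^4 n)$ via \eqref{eq:inner:cond-change-X:tail} together with the trivial bound $|U|\le n$, and using $Y_3(t)/X(t)^2 = O_\eps(1/n)$ from $2\eps$-goodness, yields \eqref{eq:inner:cond-change-Y3:exp}.

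For the tail bound, I would apply \eqref{eq:inner:probabs-list:ZI} and \eqref{eq:inner:probabs-list:ZII} with $r=11$ to conclude $|U|\le K\log n$ for some $K=K(\eps)$ with probability $1-O(n^{-10})$. Conditionally on this event, the $\Bin(Y_3(t),p_u)$ above has mean $O_\eps(\log n)$, so the Chernoff bound~\eqref{eq:simple-chernoff} --- applicable because $\log^2 n$ exceeds seven times this mean for $n$ large --- gives probability $e^{-\log^2 n} = O(n^{-10})$ that the number of hits exceeds $\log^2 n$. Summing the two contributions yields \eqref{eq:inner:cond-change-Y3:tail}.

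The main (and rather minor) obstacle is justifying the independence of the hit indicators across 3-clauses in spite of the intricate inner-loop dynamics that build $U$ from information about the formula itself. The key observation is that the hit indicator of a 3-clause depends only on its variable triple; this triple is chosen iid uniformly at the start of the outer-loop iteration by Lemma~\ref{lem:3isat:uar}(a), and the dependence between $U$ and the exposed part of the formula is already absorbed into the conditioning, leaving the hit indicators on the (still uniformly distributed) remaining 3-clauses jointly independent, as used above.
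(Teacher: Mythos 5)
Your proposal correctly identifies that $Y_3(t)-Y_3(t+1)$ equals the number of 3-clauses colored during the inner-loop run, and your leading-order mean $\frac{3uY_3(t)}{X(t)}$ matches what the paper derives. But the central modeling assumption --- that, conditional on $\Histt$ and $|U|=u$, the hit count is distributed as $\Bin(Y_3(t),p_u)$ with $p_u=1-\binom{X(t)-u}{3}/\binom{X(t)}{3}$ --- is not established and does not actually hold. Conditioning on the \emph{size} of $U$ is not the same as making $U$ a uniformly random $u$-subset independent of the 3-clauses: the inner loop \emph{adaptively} chooses which variables to expose based on the clause structure it discovers (in particular, a 3-clause that is hit once becomes pink and can become red, actively pulling a second of its variables into $U$). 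Thus there is a systematic positive correlation between ``$C$ contains a variable of $U$'' and the trajectory that determines $U$. Your final paragraph acknowledges the concern but the phrase ``the dependence between $U$ and the exposed part of the formula is already absorbed into the conditioning'' is a non sequitur: conditioning on $\sabs{U}$ absorbs none of the dependence between the \emph{set} $U$ and the formula. Relatedly, even your per-clause marginal hit probability $p_u$ is derived as if $U$ were a uniform $u$-subset, which is exactly what the adaptive construction invalidates.

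The paper avoids this trap by conditioning \emph{iteration by iteration}. Using Lemma~\ref{lem:3isat:uar}(b), the number of 3-clauses hit at step $j$ is, given the history $\mathscr F(j)$, distributed as $\Bin(Y_3'(j),\tfrac{3}{X(t)-j})$, where both parameters evolve with $j$; one then sums and invokes the optional stopping theorem at $\ZI+1$ to pass from a sum of conditionally-centered increments to $\Exp\bigl(\sum_{j=0}^{\ZI}\tfrac{3Y_3'(j)}{X(t)-j}\bigr)$, after which the drift of $Y_3'(j)$ and $X(t)-j$ is controlled via~\eqref{eq:inner:cond-change-Y3:tail} and~\eqref{eq:inner:probabs-list:ZI}. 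That step-by-step structure is exactly what makes the dependence on the inner loop's adaptive exploration tractable, and it is the part your proposal replaces with an unjustified one-shot Binomial. To salvage your approach, you would need to prove a genuine decoupling statement --- e.g., that conditional on $|U|=u$ the 3-clauses remain approximately iid uniform with an explicit total-variation bound $O_\eps(\log^4 n/n)$ --- which is likely at least as much work as the martingale route and is not something Lemma~\ref{lem:3isat:uar} gives you for free.

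Your treatment of the tail bound~\eqref{eq:inner:cond-change-Y3:tail} is essentially sound in outline and close to the paper's (the paper simply bounds by $\MI+\MII$ directly via~\eqref{eq:inner:probabs-list:MI} and~\eqref{eq:inner:probabs-list:MII}), but it again leans on the Binomial modeling claim that the expectation argument has not earned.
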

\begin{proof}
  Let us denote by $X'(j)$ the number of unused variables after $j$ iterations of the inner loop, i.e., before $\avar_j$ is used, for $j=0,1,2,\dots$.
  In every iteration of the inner loop, regardless of whether in Phase~I or Phase~II, for every uncolored 3-clause $\aclause$, there is a $\frac{3}{X'(j)}$
  probability that the current variable $\avar_j$ is found to be contained in $\aclause$ in step~(\refinnernoparen{algstep:inner:hit-remaining-occ}),
  or~(\refinnernoparen{algstep:inner:AI}.3), respectively, for the zeroth iteration in Phase~I.  If that is the case, the 3-clause is colored, and
  when the inner loop terminates, the clause will no longer be a 3-clause.

  If we suppose that, at the beginning of iteration $j=0,1,2,\dots$, before the current variable $\avar_j$ is treated, there are $Y'_3(j)$ uncolored 3-clauses and
  $X'(j)$ unused variables, then the number of 3-clauses which are hit by $\avar_j$ is distributed as $\Bin(Y'_3(j),\nfrac{3}{X'(j)})$.  (We have $X'(j) = X(t) - j$ in
  Phase~I, but in Phase~II the value of course depends on how Phase~I went.)
  
  For~\eqref{eq:inner:cond-change-Y3:tail}, we can just use the fact that the number of 3-clauses which are colored is bounded from above by $\MI + \MII$, the total
  number of colored clauses.  Thus, by~\eqref{eq:inner:probabs-list:MI} and~\eqref{eq:inner:probabs-list:MII}, this number is at most $\log^2 n$ with probability
  $1-O(n^{-10})$ for $n$ large enough depending on $\eps$.

  For the conditional expectation estimate~\eqref{eq:inner:cond-change-Y3:exp}, we compute, conditional on $\Histt$,
  \begin{equation*}
    \Exp\bigl( Y_3(t+1)-Y_3(t) \bigr)
    = \Exp\bigl( (Y_3(t+1)-Y_3(t))\Ind[R] \bigr) + \Exp\bigl( (Y_3(t+1)-Y_3(t))\Ind[\widebar R] \bigr) .
  \end{equation*}
  For the left summand, we have
  \begin{multline*}
    \Exp\bigl( (Y_3(t+1)-Y_3(t))\Ind[R] \bigr)
    \\
    \le 
    \Exp\bigl( \log^2 n \Ind[R\aand Y_3(t+1)-Y_3(t) \le \log^2 n] \bigr)
    \\
    \shoveright{+ \Exp\bigl( m \Ind[R\aand Y_3(t+1)-Y_3(t) \ge \log^2 n] \bigr)}
    \\
    \le 
    \log^2 n\; \Prb[R] + m \Prb[Y_3(t+1)-Y_3(t) \ge \log^2 n]
    = \log^2 n\; O_\eps(\tfrac{\log^2 n}{n}) + O(n^{-9})
    \\
    = O_\eps(\tfrac{\log^4 n}{n}),
  \end{multline*}
  by~\eqref{eq:inner:probabs-list:R} and~\eqref{eq:inner:cond-change-Y3:tail}.

  For the right summand, we have
  \begin{equation*}
    \Exp\bigl( (Y_3(t+1)-Y_3(t))   \Ind[\widebar R] \bigr)
     = \Exp\biggl( \sum_{j=1}^{\ZI+1} G(j) \Ind[\widebar R] \biggr)
     + O_\eps(\tfrac{\log^2 n}{n}),
  \end{equation*}
  where, conditioned on $Y'_3(j)$ as defined above, the $G(j+1)$ are distributed as $\Bin(Y'_3(j),\frac{3}{X(t)-j})$, and the $O(\cdot)$ accounts for the possibility
  that $\FI < \ZI$, cf.~\eqref{eq:inner:FInoR-equal-ZI}.
  Using~\eqref{eq:simple-chernoff} and a similar argument as above, we see that
  \begin{equation*}
    \Exp\biggl( \sum_{j=1}^{\ZI+1} G(j) \Ind[\widebar R] \biggr)
    = \Exp\biggl( \sum_{j=1}^{\ZI+1} G(j) \biggr) + O_\eps(\tfrac{\log^4 n}{n}).
  \end{equation*}

  Computing the expectation of the sum can be done in the same way as for classical SAT (e.g.\ in~\cite{Achlioptas00,AchlioptasSorkin00,Achlioptas01}).  Indeed, using
  the optional stopping theorem ($\ZI+1$ is a stopping time for the history of the queue together with all random processes involved; cf.~the proof of the next lemma
  for the details, where the situation is essentially the same, only a bit more complicated), we find that
  \begin{equation*}
    \Exp\biggl( \sum_{j=1}^{\ZI+1} G(j) \biggr) = \Exp\biggl( \sum_{j=0}^{\ZI} \frac{3Y'_3(j)}{X(t)-j} \biggr),
  \end{equation*}
  where we agree that $0/0 = 0$.
  By~\eqref{eq:inner:cond-change-Y3:tail}, $Y_3(t)-\log^2 n \le Y'_3(j) \le Y_3(t)$ with probability $1-O(n^{-10})$, and by~\eqref{eq:inner:probabs-list:ZI} we have
  $\ZI \le \log^2 n$, implying $X(t)-j \ge \frac12 X(t)$, with probability $1-O(n^{-10})$.
  Thus, we conclude
  \begin{multline*}
    \Exp\biggl( \sum_{j=0}^{\ZI} \frac{3 Y'_3(j)}{X(t)-j} \biggr)
    \\
    = \Exp\Biggl( \Ind\bigl[ Y_3(t)-\log^2 n \le Y'_3(j) \;\aand\; X(t)-j\ge \tfrac12 X(t) \bigr]
    \cdot \sum_{j=0}^{\ZI} \biggl( \frac{3 Y_3(t)}{X(t)} + O\Bigl(\frac{X(t) \log^2 n}{X(t)^2}\Bigr)\biggr) \Biggr) \\
    \shoveright{+O(n^{-7})}
    \\
    = \bigl( 1 + \Exp\ZI \bigr) \biggl( \frac{3 Y_3(t)}{X(t)} + O\Bigl(\frac{\log^2 n}{n}\Bigr) + O(n^{-7})) \biggr)
    = -\DX \frac{3 Y_3(t)}{X(t)} + O\Bigl(\frac{\log^2 n}{n} \Bigr),
  \end{multline*}
  by~\eqref{eq:inner:cond-change-X:ZI}.  This concludes the proof of~\eqref{eq:inner:cond-change-Y3:exp}.
\end{proof}

\newcommand{\sto}{\shortrightarrow}
\begin{lemma}\label{lem:inner:cond-change-Y2}
  If $\Histt$ is $2\eps$-good, then
  \begin{subequations}
    \begin{align}
      \biggl| \frac{3 Y_3(t)}{2 X(t)}  -  (\DX + 1)\frac{13 Y_3(t)}{8 X(t)} + \DX \frac{2 Y_2(t)}{X(t)} 
      -\Exp\bigl( Y_2(t+1) - Y_2(t) \bigm| \Histt \bigr) \biggr|                  & = O_\eps(\tfrac{\log^4 n}{n})       \label{eq:inner:cond-change-Y2:exp}\\
      \intertext{and}
      \Prb\biggl[ \bigl| Y_2(t+1) - Y_2(t) \bigr| \ge \log^2 n\biggm|\Histt\biggr] & = O(n^{-10})                        \label{eq:inner:cond-change-Y2:tail}
    \end{align}
  \end{subequations}
\end{lemma}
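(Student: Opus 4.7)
The proof follows the template of Lemma~\ref{lem:inner:cond-change-Y3}, splitting the analysis by whether the event $R$ occurs. For the tail bound~\eqref{eq:inner:cond-change-Y2:tail} the deterministic estimate $|Y_2(t+1)-Y_2(t)| \le \MI + \MII$ suffices, since a 2-clause is created or destroyed only by a variable that coloured one of its literals; inequalities \eqref{eq:inner:probabs-list:MI} and \eqref{eq:inner:probabs-list:MII} then give $|Y_2(t+1)-Y_2(t)| \le \log^2 n$ with probability $1 - O(n^{-10})$. Combining the same deterministic bound with~\eqref{eq:inner:probabs-list:R} yields $\Exp\bigl((Y_2(t+1)-Y_2(t))\Ind[R] \bigm| \Histt\bigr) = O_\eps(\tfrac{\log^4 n}{n})$, so from here on I restrict attention to the event $\widebar R$, on which Phase~II does not occur.

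On $\widebar R$, I decompose $Y_2(t+1)-Y_2(t) = C - D + E$, where $C$ counts the 3-clauses coloured pink at the end of the inner loop (and hence becoming 2-clauses when tentative values are made permanent), $D$ counts the 2-clauses that are removed (either satisfied directly by a used variable or serviced as a red 1-clause), and $E$ is an error term collecting all higher-order events in which a single clause is hit by two or more used variables during this outer-loop iteration. A union bound based on~\eqref{eq:inner:probabs-list:ZI} and~\eqref{eq:inner:probabs-list:MI} gives $\Exp(|E|\mid\Histt) = O_\eps(\tfrac{\log^4 n}{n})$.

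For $\Exp(C\mid\Histt)$ I argue as in the proof of Lemma~\ref{lem:inner:cond-change-Y3}: sum the per-iteration contributions up to the stopping time $\ZI+1$ and invoke the optional stopping theorem. In the initial iteration the free variable $\avar_0$ is set to $\nfrac12$, so each of the $\tfrac{3 Y_3(t)}{X(t)}$ expected hits on an uncoloured 3-clause produces a false literal with conditional probability $\nfrac12$; in each subsequent iteration $j\ge 1$ the current variable is set to $\bar x(\anI_j)$, so each of the $\tfrac{3 Y'_3(j)}{X(t)-j}$ expected hits produces a false literal with conditional probability $1-\tfrac{11}{24}=\tfrac{13}{24}$, by Lemma~\ref{lem:rndint:middlemost-point-in-other}. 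Approximating $Y'_3(j)=Y_3(t)+O(\log^2 n)$ and $X(t)-j=X(t)+O(\log n)$ via~\eqref{eq:inner:cond-change-Y3:tail} and~\eqref{eq:inner:probabs-list:ZI}, using~\eqref{eq:inner:cond-change-X:ZI} for $\Exp(\ZI\mid\Histt)$, and absorbing the discrepancy between $\FI$ and $\ZI$ on $\widebar R$ via~\eqref{eq:inner:FInoR-equal-ZI}, I obtain
\begin{equation*}
  \Exp(C\mid\Histt) = \frac{3 Y_3(t)}{2X(t)} + \Exp(\ZI\mid\Histt) \cdot \frac{13 Y_3(t)}{8 X(t)} + O_\eps(\tfrac{\log^4 n}{n}).
\end{equation*}

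The expectation of $D$ is simpler: every used variable that lands in a 2-clause removes it regardless of the literal's truth value (either the clause is satisfied, or it becomes red and gets serviced in the queue), so the per-iteration contribution is $\tfrac{2 Y'_2(j)}{X(t)-j}$ with no extra probability factor, and optional stopping yields $\Exp(D\mid\Histt) = (1+\Exp(\ZI\mid\Histt))\cdot\tfrac{2 Y_2(t)}{X(t)} + O_\eps(\tfrac{\log^4 n}{n}) = -\DX\cdot\tfrac{2 Y_2(t)}{X(t)} + O_\eps(\tfrac{\log^4 n}{n})$. Substituting $\Exp(\ZI\mid\Histt) = -1-\DX$ into $\Exp(C-D\mid\Histt)$ then produces the claimed identity~\eqref{eq:inner:cond-change-Y2:exp}. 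The principal obstacle is the bookkeeping for $E$, above all ensuring that a 3-clause made pink in some iteration and then hit again (turning red, blue, or black, and so leaving the count $C$) is correctly excluded; this sub-event is controlled by $\MI^2/X(t)$ in expectation, bounded via~\eqref{eq:inner:probabs-list:HItwo} and~\eqref{eq:inner:probabs-list:MI} in the same spirit as in the $Y_3$ analysis.
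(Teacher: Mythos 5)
Your proof is correct and follows essentially the same route as the paper: tail bound via $\MI+\MII$, a gain/loss decomposition of $Y_2(t+1)-Y_2(t)$, separate treatment of the zeroth iteration (probability $\nfrac12$ of producing a false literal) versus iterations $j\ge1$ (probability $\Exp P=1-\nfrac{11}{24}=\nfrac{13}{24}$), an optional-stopping argument to evaluate the sum up to $\ZI+1$, and absorbing the repair and higher-order events via~\eqref{eq:inner:probabs-list:R}, \eqref{eq:inner:probabs-list:HItwo} and the $\MI$/$\MII$ tail bounds. As a side note, your explicit separation of the iteration-$0$ contribution $\tfrac{3Y_3(t)}{2X(t)}$ from the $\Exp\ZI\cdot\tfrac{13Y_3(t)}{8X(t)}$ contribution of subsequent iterations is actually cleaner than the paper's own formula for $\Exp\bigl(\sum_{j=1}^{\ZI+1}G(j)\bigr)$, since as written there $D(1)=G(1)-\tfrac{13Y_3(t)}{8X(t)}$ does not have conditional mean zero (the term for $j=0$ should carry coefficient $\tfrac{3}{2}$, not $\tfrac{13}{8}$); your version is the one consistent with the coefficient $\tfrac{3Y_3(t)}{2X(t)}$ appearing in the stated estimate~\eqref{eq:inner:cond-change-Y2:exp}.
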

\begin{proof}
  The tail inequality is obtained by referring to~\eqref{eq:inner:probabs-list:MI} and~\eqref{eq:inner:probabs-list:MII} again, since very clause which changes its
  length has been colored before that can happen.

  Let us denote by $X'(j)$ the number of unused variables after $j$ iterations of the inner loop, i.e., before $\avar_j$ is selected.
  In every iteration of the inner loop, regardless of whether in Phase~I or Phase~II, for every uncolored 2-clause $\aclause$, there is a $\frac{2}{X'(j)}$
  probability that the current variable $\avar_j$ is found to be contained in $\aclause$ in step~(\refinnernoparen{algstep:inner:while}.\ref{algstep:inner:hit-remaining-occ}),
  or~(\refinnernoparen{algstep:inner:AI}.3), respectively, for the zeroth iteration in Phase~I.  If that is the case, the 2-clause is colored, and
  when the inner loop terminates, the clause will no longer be a 2-clause.  The same is true for 3-clauses which have become red in some previous iteration.
  Denote the total number of 2-clauses and pink 3-clauses which are hit by the current variable in some iteration over the whole run of
  Algorithm~\ref{algo:3isat:inner} by $L_{2\times}$.
  
  The analysis of the expectation and tail of $L_{2\times}$ is almost identical to the analysis done in the previous lemma for the 3-clauses.  Here, too, we have to
  condition on the number of uncolored 2-clauses and pink 3-clauses not changing too much.  The difference is the need to control the number of pink 3-clauses and,
  after a repair, the number of 3-clauses becoming 2-clauses.  The latter two numbers are bounded from above by $Y_3(t+1)-Y_3(t)$, which is at most $\log^2 n$ with
  probability $1-O(n^{-10})$.  Thus, for $L_{2\times}$, we just note that its expectation accounts for the summand $- \DX \frac{2 Y_2(t)}{X(t)}$
  in~\eqref{eq:inner:cond-change-Y2:exp}.
  
  Now let us denote the number of 3-clauses which become 2-clauses during the two phases of the inner loop by $L_{3\sto 2}$, and let us also focus on the case when no
  repair occurs.

  In this case $L_{3\sto2}$ behaves similarly to $Y_3(t+1)-Y_3(t)$, with two differences: The probabilities that a 3-clause is colored pink is different; and the
  probability in the 
  zeroth iteration differs from the others.
  Let us first consider the zeroth iteration.  The probability that the tentative value $\nfrac12$ of $\avar_0$ makes a 3-clause pink is $\nfrac12$ by
  Lemma~\ref{lem:rint:x-in-I}.  Thus, if there is no repair, this contribution is distributed as $\Bin(Y_3(t),\frac12\cdot\frac{3}{X(t)})$.

  For the other iterations, $j=1,2,3,\dots$, if an uncolored 3-clause $\aclause$ contains the current variable $\avar_j$, the probability that $\aclause$ becomes pink
  in~\refinner{algstep:inner:colorize} depends on the current interval $\anI_j$, and is distributed as~$P$ defined in~\eqref{eq:Q:def-P}.  Indeed, if we denote the
  number of uncolored 3-clauses in iteration $j$ by $Y'_3(j)$ again, then, conditioned on $Y'_3(j)$ and $X'(j)$, the number $G(j+1)$ of uncolored 3-clauses which
  become pink in iteration $j$ is distributed as $\Bin(Y'_3(j),\frac{3P(j+1)}{X'(t)})$, i.e., binomial with random parameter $P(j+1)$.  The $P(j)$ are the iid random
  variables distributed as $P$ in~\eqref{eq:Q:def-P} defined by $\bar x(\anI_j)$, in other words $P(j+1) = 1-2\bar x(\anI_j)(1-\bar x(\anI_j))$.

  Let $G(1)$ be distributed as $\Bin(Y_3(t),\frac{3}{2X(t)})$, define $D(j+1) := G(j+1) - \frac{13 Y'_3(j)}{8 X'(j)}$, where we agree that $0/0 = 0$, and denote by
  $\mathscr F(j)$ the history of the process up to iteration $j$, i.e., before the variable $\avar_j$ is treated.  Then $\sum_{j=1}^\ell D(j)$, $\ell=1,2,3,\dots$, is
  a martingale with respect to $\mathscr F(j)$, $j=0,1,2,\dots$, and $\ZI+1$ is a stopping time, because deciding whether $\ZI + 1 \le \ell$ amounts to checking
  whether $\QI(\ell) = 0$.

  To estimate the expectation of the contribution of these, we use the optional stopping theorem again; note that the stopping time is finite with probability one,
  because $\ZI \le m$.  We conclude that $\Exp\lt(\sum_{j=1}^{\ZI+1} D(j)\rt) = 0$, which means
  \begin{equation*}
    \Exp\Biggl(\sum_{j=1}^{\ZI+1} G(j)\Biggr) = \Exp\Biggl(\sum_{j=0}^{\ZI} \frac{13 Y'_3(j)}{8 X'(j)} \Biggr).
  \end{equation*}
  Arguing as we have done a number of times in regard of the possible deviations of $Y'(j)$ from $Y(t)$, we see that the right hand side equals
  \begin{equation*}
    \bigl( \Exp\ZI +1 \bigr) \frac{13 Y_3(t)}{8 X(t)} +O_\eps\bigl(\tfrac{\log^4 n}{n}\bigr).
  \end{equation*}

  Getting rid of the conditioning on the event that no repair occurs is done in the same way as in the previous lemma, and we leave the details to the reader.
\end{proof}

\subsection{Failure probability}

We now bound the probability that an empty clause is generated by a run of the inner loop, including, possibly, the repair and following second phase.

\begin{lemma}\label{lem:inner:no-failure}
  If $\Histt$ is $2\eps$-good, then the probability that Algorithm~\ref{algo:3isat:inner} produces an empty clause, is $o(\nfrac{1}{n})$.
\end{lemma}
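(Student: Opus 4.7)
My plan is to apply Lemma~\ref{lem:3isat:failure}, which reduces the event of producing an empty clause to the event that the algorithm has \hurzed\ at some point during the run, and then to bound the probability of each flag-raising event by $o(\nfrac1n)$ and union-bound. There are four places where the algorithm can \hurz: the three Phase~I sub-conditions (\refinnernoparen{algstep:inner:red1cl}.\ref{algstep:inner:hyperedge:obacht}), (\refinnernoparen{algstep:inner:red1cl}.\ref{algstep:inner:cycle:obacht}) and (\refinnernoparen{algstep:inner:red1cl}.\ref{algstep:inner:double-hit:obacht}), together with the Phase~II check inside step~\refinner{algstep:inner:red1cl}; the second of these Phase~I sub-conditions splits into two sub-cases, which I would treat separately.

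Three of the resulting five events can be handled immediately by Lemma~\ref{lem:inner:probabs-list}. Sub-condition (\refinnernoparen{algstep:inner:red1cl}.\ref{algstep:inner:double-hit:obacht}), that $\avar_j$ lies in three or more red clauses, is a sub-event of $\HItwo$ and is therefore bounded by $O_\eps(\nfrac{\log^4n}{n^2})$ via~\eqref{eq:inner:probabs-list:HItwo}. The blue-clause alternative in (\refinnernoparen{algstep:inner:red1cl}.\ref{algstep:inner:cycle:obacht}) requires $\avar_j$ to occur simultaneously in a blue clause and, since step~\refinner{algstep:inner:red1cl} has been entered, in a red clause distinct from $\aclause_j$; that is two distinct colored clauses containing $\avar_j$, again a sub-event of $\HItwo$. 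The Phase~II flag is a sub-event of $\HII$, and it can only trigger conditional on $R$; combining~\eqref{eq:inner:probabs-list:HII} with~\eqref{eq:inner:probabs-list:R} yields $\Prb[\HII\mid\Histt\aand R]\,\Prb[R\mid\Histt] = O_\eps(\nfrac{\log^4n}{n^2})$.

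The remaining two events, namely (\refinnernoparen{algstep:inner:red1cl}.\ref{algstep:inner:hyperedge:obacht}) and the cycle alternative of (\refinnernoparen{algstep:inner:red1cl}.\ref{algstep:inner:cycle:obacht}), each decompose as the intersection of a ``structural'' event with the ``local'' event $\HI$ (present because step~\refinner{algstep:inner:red1cl} is entered only when $\avar_j$ is in another red clause). I would bound each structural factor by $O_\eps(\nfrac{\log^2n}{n})$. For the hyperedge case the structural event is that some 3-clause has been hit by current variables in two distinct iterations of Phase~I; its expected count is at most $\binom{\FI}{2}(\nfrac{3}{X(t)})^2 Y_3(t) = O_\eps(\nfrac{\log^2n}{n})$ using $\FI\le\ZI$, \eqref{eq:inner:probabs-list:ZI} and the $2\eps$-goodness hypothesis, and Markov's inequality yields the same bound on the existence probability. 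For the cycle case the structural event is that $G$ contains a cycle, which arises when a 2-clause edge added to $G$ in some iteration has its other endpoint already in $V(G)$; using the bound $|V(G)| = O_\eps(\log n)$ obtainable from \eqref{eq:inner:probabs-list:AI} and~\eqref{eq:inner:probabs-list:MI}, and summing the collision probability over the $O_\eps(\log n)$ edges added during Phase~I, this event too has probability $O_\eps(\nfrac{\log^2n}{n})$.

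The main obstacle is to justify multiplying the structural and local factors, since Lemma~\ref{lem:inner:probabs-list} does not provide joint bounds of this shape. I plan to appeal to Lemma~\ref{lem:3isat:uar}: the structural events depend only on exposures of clauses not containing $\avar_j$ (up to a lower-order term from the rare case where the structural witness overlaps $\avar_j$), whereas the local event concerns only the clauses through $\avar_j$. Exposing the former first and the latter conditionally, the two factors are independent up to a constant, so each joint probability is $O_\eps(\nfrac{\log^4n}{n^2})$. Summing all five contributions by union bound yields $o(\nfrac1n)$, as required.
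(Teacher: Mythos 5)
Your proposal has the same backbone as the paper's proof: both invoke Lemma~\ref{lem:3isat:failure}, enumerate the places where a flag can be raised, and bound each via Lemma~\ref{lem:inner:probabs-list}. Your handling of~(\refinnernoparen{algstep:inner:red1cl}.\ref{algstep:inner:double-hit:obacht}) through $\HItwo$ and of the Phase~II flag through $\HII$ conditioned on $R$ matches the paper. You also peel off the blue-clause alternative of~(\refinnernoparen{algstep:inner:red1cl}.\ref{algstep:inner:cycle:obacht}) and absorb it into $\HItwo$, which the paper leaves implicit in its ``hit twice'' count.

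The substantive divergence is in~(\refinnernoparen{algstep:inner:red1cl}.\ref{algstep:inner:hyperedge:obacht}) and the cycle alternative. The paper asserts that the probability of the structural event (some 3-clause, resp.\ 2-clause, hit by current variables in two distinct iterations) is $O_\eps(\log n/n^2)$, citing~\eqref{eq:inner:probabs-list:ZI} alone. Your accounting is the correct one: there are $O(\log^2 n)$ pairs of iterations whp, $O(n)$ candidate clauses, and $O_\eps(1/n^2)$ per pair and clause, giving $O_\eps(\log^2 n/n)$ for the raw structural event. That quantity is \emph{not} $o(1/n)$, so one genuinely must exploit the conjunction with the local event $\HI$ furnished by the entry condition of step~\refinner{algstep:inner:red1cl}. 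In that sense the product decomposition you propose is not an optional refinement but the missing ingredient in what the paper displays, and your version is more careful than the written argument.

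On the independence step, your sketch has the right idea, but the clean formalization is a tower argument rather than literal independence. When the structural witness is hit at iterations $i<i'<j$, the witness is $\mathscr F(j)$-measurable, and conditioned on $\mathscr F(j)$ the probability that iteration~$j$'s exposure places $\avar_j$ in a red clause other than $\aclause_j$ is $O_\eps(\log n/n)$ on the good event; summing over $j$ yields the product you want. The boundary case where one of the two structural hits \emph{is} $\avar_j$ itself should be split off: there $\avar_j$ lies simultaneously in an already-colored clause (the structural witness) and in a red clause other than $\aclause_j$, which---unless the two coincide, a sub-case forcing a third hit and hence negligible---is a sub-event of $\HItwo$ and is covered by~\eqref{eq:inner:probabs-list:HItwo}. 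With that split made explicit, your argument is sound and gives $O_\eps(\log^4 n/n^2)$ for each of the two delicate cases, hence the stated $o(1/n)$.
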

\begin{proof}
  We use Lemma~\ref{lem:3isat:failure}.
  Let us first deal with Phase~II.
  The probability that the algorithm \hurzes\ in Phase~II is $O_\eps(\frac{\log^2n}{n})$ by~\eqref{eq:inner:probabs-list:HII}, conditioned on a repair
  occurring, so that by the law of total probability, the probability that the algorithm \hurzes\ in Phase~II is at most $O_\eps(\frac{\log^4n}{n^2})$,
  by~\eqref{eq:inner:probabs-list:R}.
  
  For Phase~I, we need to go through the possible reasons for the algorithm to \hurz.  First of all, by~\eqref{eq:inner:probabs-list:HItwo}, the
  probability that the current variable $\avar_j$ is contained in a colored (red or not) clause other than the current one $\aclause_j$ is
  $O_\eps(\frac{\log^4n}{n^2})$, which takes care of step~(\refinnernoparen{algstep:inner:while}.\ref{algstep:inner:double-hit:obacht}).
  
  The probability that a fixed clause contains the current variable of a fixed iteration depends only on the number of variables and the number of unexposed atoms
  in the clause, and so it can always be bounded by $\frac{3}{\eps n}$.
  In order for a 3-clause to become red or blue (or even black), it must contain the current variable of (at least) two iterations.  The probability of this
  happening is $O_\eps(\frac{\log n}{n^2})$, where we have used~\eqref{eq:inner:probabs-list:ZI}.  This gives the case of
  step~(\refinnernoparen{algstep:inner:red1cl}.\ref{algstep:inner:hyperedge:obacht}).
  
  Similarly, for step~(\refinnernoparen{algstep:inner:red1cl}.\ref{algstep:inner:cycle:obacht}), a 2-clause must have been hit twice by the current variable of an
  iteration, the probability of which is again bounded by $O_\eps(\frac{\log n}{n^2})$.
  
  In total, the failure probability can be bounded by $O(\frac{\polylog n}{n^2})$
\end{proof}



\section{The outer loop}\label{sec:outer}

At the heart of analysis of the outer loop is the well-known theorem of Wormald's which, in certain situations, allows to estimate parameters of random processes
by solutions to differential equations.  Here is the first goal of our analysis.

\begin{lemma}\label{lem:3isat:ivp-has-sol}
  For every $c \in \lt]0,3\rt]$, the initial value problem
  \begin{subequations}\label{eq:3isat:ivp-dx}
  \begin{align}
    \frac{dy}{dx} &= \frac{-18cx^4 + 2y(12x-y)}{x(12x - y)}                                                                          \label{eq:3isat:ivp-dx:ode}\\
    y(1) &= 0                                                                                                                        \label{eq:3isat:ivp-dx:inival}
  \end{align}    
  \end{subequations}
  has a unique solution $y$ defined on the interval $\lt]0,1\rt]$.
\end{lemma}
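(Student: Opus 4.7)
The plan is to apply the Picard-Lindel\"of theorem for local existence and uniqueness, and then extend the solution backward from $x=1$ by establishing a priori bounds that keep the trajectory inside the region of regularity of the right-hand side. Write $F(x,y) := (-18cx^4 + 2y(12x-y))/(x(12x-y))$. First I would observe that $F$ is $C^\infty$, hence locally Lipschitz in $y$, on the open region $\mathcal D := \{(x,y) : x > 0,\ y < 12x\}$, and that the initial point $(1,0)$ lies in $\mathcal D$. Picard-Lindel\"of then yields a unique maximal solution $y$ on some interval $(a,1]$ with $0 \le a < 1$, along which $(x,y(x)) \in \mathcal D$; the remaining task is to rule out $a>0$.

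Next I would establish two a priori barriers. The lower barrier $y(x) \ge 0$ is immediate: since $F(x,0) = -3cx^2/2 < 0$ and $y'(1) = -3c/2 < 0$, the curve enters the half-plane $y>0$ as $x$ decreases from $1$, and if $y$ were to return to $0$ at some $x_0 < 1$ the sign of $y'(x_0)$ would force $y<0$ for $x$ slightly greater than $x_0$, a contradiction. The upper barrier --- namely that $f(x) := 12x - y(x)$ stays uniformly positive on each $[\eps,1]$ with $\eps>0$ --- is the crux and requires more care.

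For the upper barrier I would carry out a phase-plane analysis based on the nullclines of $F$. Factoring the numerator yields the nullclines $y_\pm(x) := 6x\bigl(1 \pm \sqrt{1 - cx^2/4}\bigr)$, which are real on $[0,1]$ precisely because $c \le 3 < 4$, and one checks that $y'(x) > 0$ strictly between $y_-$ and $y_+$, while $y'(x) < 0$ in the regions $\{0 \le y < y_-\}$ and $\{y_+ < y < 12x\}$. Starting at $(1,0)$, which lies in $\{y < y_-\}$ since $y_-(1) = 6(1-\sqrt{1-c/4}) > 0$, the solution moves upward as $x$ decreases. Either it never reaches $y_-$ (in which case $y < y_- < 12x$ throughout) or it first touches $y_-$ at some $x_1$, where $y'(x_1)=0$. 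Using that $y_-$ is strictly increasing in $x$, a short comparison shows the solution then enters and remains trapped in the strip $y_-(x) \le y(x) \le y_+(x)$ for $x \in (0,x_1]$, so that $f(x) \ge 12x - y_+(x) = y_-(x) > 0$ for all $x > 0$.

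Combining these barriers, $(x, y(x))$ remains in a compact subset of $\mathcal D$ on every $[\eps,1]$ with $\eps > 0$, so the standard extension theorem forces $a = 0$, giving existence and uniqueness on $(0,1]$ as claimed. The main obstacle is the phase-plane trapping step --- in particular, ruling out that after first touching $y_-$ the solution crosses $y_+$ from below. A cleaner alternative, which I would also consider, is to switch to the variable $u(x) := y(x)/x$, derive the equation $x u' = u - 18cx^2/(12 - u)$ with $u(1)=0$, and argue directly on the logarithmic time $s := \log x$ that $u$ stays bounded away from $12$; indeed, the asymptotic balance as $s\to-\infty$ gives $u(x) = O(x)$, so $f(x) = 12x(1+o(1))$ as $x\to 0^+$.
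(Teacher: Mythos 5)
Your overall framework — Picard–Lindel\"of plus a barrier that keeps the maximal solution inside the region $\{x>0,\ y<12x\}$ — is the same as the paper's, and your lower barrier $y\ge 0$ is fine. But the upper barrier is where the argument breaks, and you have correctly identified the weak spot yourself. The phase-plane trapping claim, that after the first touch of $y_-$ the trajectory ``remains trapped'' in the strip $y_-\le y\le y_+$ for all smaller $x$, does not follow from the local dynamics. Escape downward through $y_-$ is indeed ruled out (at a hypothetical crossing $x_2$ one would need $y'(x_2)\ge y_-'(x_2)>0$ while $F(x_2,y_-(x_2))=0$). But escape upward through $y_+$ is \emph{not} ruled out: at a touch point $x_3$ one has $y'(x_3)=0$ while $y_+'(x_3)>0$, so $(y-y_+)'(x_3)<0$, hence $y(x)>y_+(x)$ for $x$ just below $x_3$. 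Once above $y_+$ the sign of $F$ flips to negative, $y$ increases as $x$ decreases, and the trajectory heads toward the line $y=12x$, which is exactly the blow-up you need to prevent. Whether the solution actually reaches $y_+$ is a global question that the nullcline picture alone cannot settle; it needs a quantitative bound. Your fallback via $u=y/x$ and the equation $xu'=u-18cx^2/(12-u)$ has the same circularity: the asymptotic balance ``as $s=\log x\to-\infty$'' already presupposes that the solution extends to all $x>0$, which is what one wants to prove.

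The paper closes precisely this gap in an elementary way: it picks a concrete piecewise-linear barrier, $y=5(1-x)$ on $[\nfrac45,1]$ and $y=6x$ on $(0,\nfrac45]$, both lying strictly inside your strip (for $c\le3$ one has $y_-\le 3x<6x<9x\le y_+$), and checks by direct computation that along each piece the vector field points to the side of the line that keeps the trajectory away from $y=12x$, e.g.\ $g(x,6x)=-3cx^2+12\ge -9(\nfrac45)^2+12>6$ for $0<x\le\nfrac45$. Because the barrier is an explicit curve, the slope comparison is a one-line computation rather than a qualitative trapping claim, and it yields the uniform separation $12x-y(x)\ge 6x$ needed for the extension theorem. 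If you want to salvage your nullcline approach, you would need some such explicit intermediate curve (or an integral estimate) showing the trajectory stays bounded away from $y_+$; as written, the crucial step is asserted but not proved.
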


See Fig~\ref{fig:3isat:ivp} for a rough sketch of the direction field~\eqref{eq:3isat:ivp-dx:ode} with $c=2$, and a solution to the IVP.  Since, ultimately, we
will solve the IVP~\eqref{eq:3isat:ivp-dx} numerically for the right value of $c$ anyway, strictly speaking, this lemma is not needed to complete our argument.
However, we would like to reduce our reliance on numerical computations as much as possible.

\begin{figure}[htp]
  \centering
  \includegraphics[scale=.3333333333333333333333]{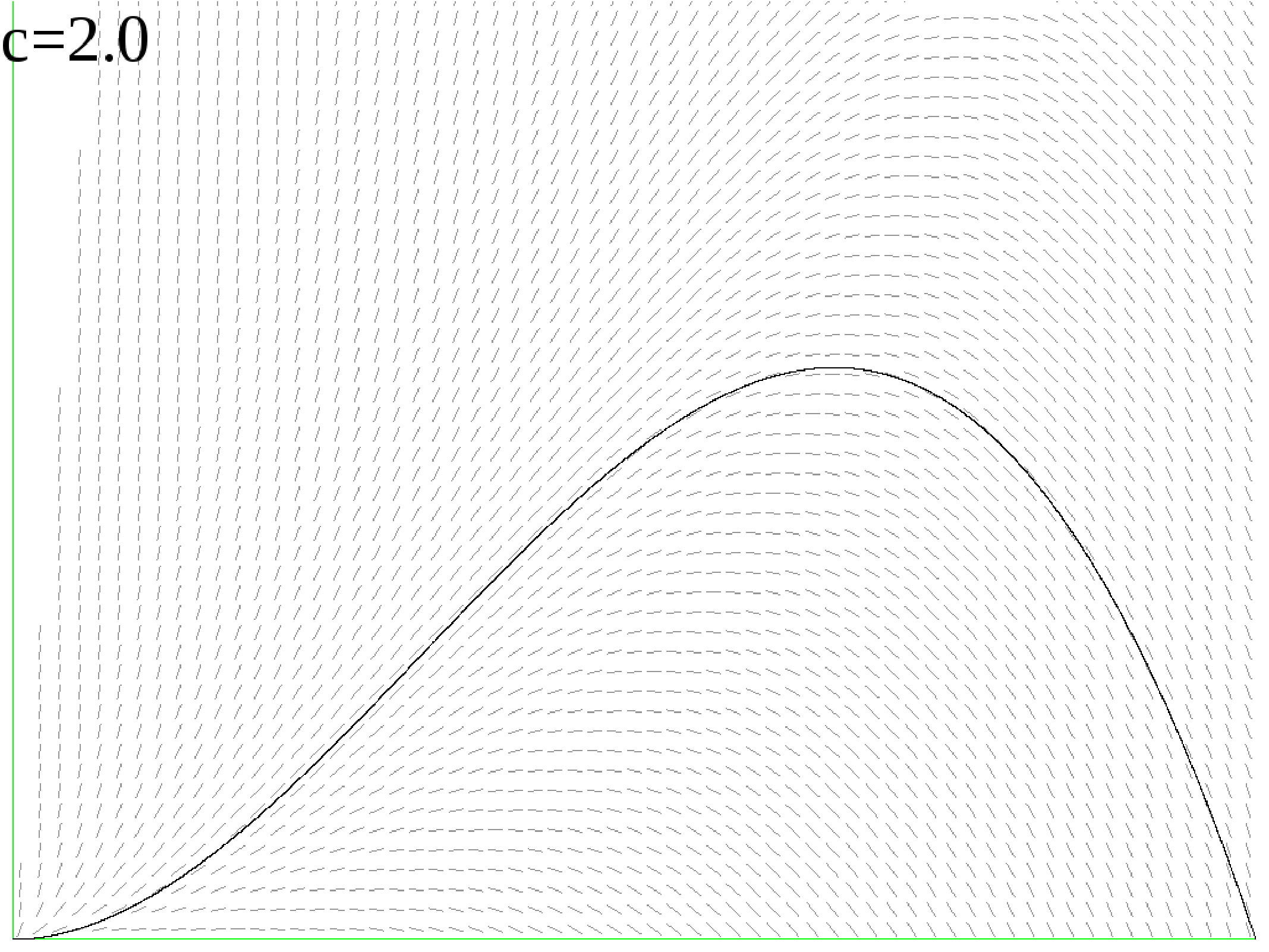}
  \caption{Direction field and solution for IVP~\eqref{eq:3isat:ivp-dx}}
  \label{fig:3isat:ivp}
\end{figure}

\begin{proof}[Proof of Lemma~\ref{lem:3isat:ivp-has-sol}]
  To use the known theorems on IVPs, note that the right hand side of~\eqref{eq:3isat:ivp-dx:ode}, seen as a function of $(x,y)$, is continuously differentiable on
  $\{ (x,y) \mid x > 0, y < 12 x \}$.

  We make the following claims:
  \begin{enumerate}[\it(a)]
  \item\it\label{enum:3isat:ivp-has-sol:right} For $\nfrac45 \le x \le 1$, the solution to the IVP never crosses the line $y=5(1-x)$;
  \item\it\label{enum:3isat:ivp-has-sol:left} for $0 < x \le \nfrac45$, the solution to the IVP never crosses the line $y=6x$.
  \end{enumerate}
  Thus, the solution to the IVP does not approach the line $y=12x$, which implies that the solution extends to the whole interval $\lt]0,1\rt]$.

  Let $g(x,y) := \frac{-18cx^4 + 2y(12x-y)}{x(12x - y)}$, the right hand side of the ODE~\eqref{eq:3isat:ivp-dx:ode}.
  To prove claim~(\ref{enum:3isat:ivp-has-sol:left}), it suffices to show that, with $y(x) := 6x$, whenever $0<x\le\nfrac45$, we have $\frac{dy}{dx} < g(x,y(x))$.
  The computation is easy but tedious and can be found in the appendix, see~\ref{apdx:3isat:ivp-has-sol}.
  Similarly, for claim~(\ref{enum:3isat:ivp-has-sol:right}), with $y(x) := 5(1-x)$, for every $\nfrac45 \le x\le 1$, we have $\frac{dy}{dx} < g(x,y(x))$.  The
  computation is in the appendix, too.
\end{proof}


\begin{lemma}\label{lem:outer:params-stickto-IVP}
  Let $c\le 3$ and $y$ a solution to~\eqref{eq:3isat:ivp-dx}, and let $x_0$ be the infimum over all $x \ge 3\eps$ for which
  \begin{equation}\label{eq:3isat:critic-line}
    13 y(x) < (1-3\eps) 12 x
  \end{equation}
  holds.
  Then there exists a $\tau >0$ and a strictly decreasing smooth function $x\colon[0,\tau]\to\RR$ with $x(0) = 1$ and $x(\tau)=x_0$, such that whp for all~$t$ with
  $\nfrac{t}{n} < \tau$:
  \begin{subequations}%
    \begin{align}
      X(t)   &= n\, x(\nfrac{t}{n}) + o(n) \\
      Y_2(t) &= n\, y(x(\nfrac{t}{n})) + o(n)         \\
      Y_3(t) &= n\, c x(\nfrac{t}{n})^3 +o(n).
    \end{align}
    Moreover, we have the relationship
    \begin{equation}\label{eq:outer:ode-dt:dx}
      \frac{dx}{dt}  = -1 - \frac{ y(x) }{ x - \frac{13}{12} y(x) } = -\frac{ 12\,x - y(x) }{ 12\,x - 13\,y(x) }
    \end{equation}
  \end{subequations}
\end{lemma}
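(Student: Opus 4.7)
The plan is to apply Wormald's differential equation method~\cite{Wormald-LecNot-99} to the Markov chain $(X(t),Y_2(t),Y_3(t))$ determined by the outer loop, whose Markov property is guaranteed by Lemma~\ref{lem:3isat:uar}. After rescaling by~$n$, I expect $(X(t)/n, Y_2(t)/n, Y_3(t)/n)$ to follow, whp, the unique solution of a system of three ODEs whose drift functions can be read off from Lemmas~\ref{lem:inner:cond-change-X}--\ref{lem:inner:cond-change-Y2}. Wormald's three hypotheses will be verified on the open domain
\[
 \mathcal D_\eps := \bigl\{(x,y_2,y_3)\in\RR^3 : x > 3\eps,\ 13 y_2 < (1-3\eps)\cdot 12 x,\ 0 < y_3 < c\bigr\}.
\]

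The tail bounds~\eqref{eq:inner:cond-change-X:tail}, \eqref{eq:inner:cond-change-Y3:tail} and~\eqref{eq:inner:cond-change-Y2:tail} bound the one-step increments by $\log^2 n$ with probability $1-O(n^{-10})$, which supplies Wormald's boundedness hypothesis. The expectation estimates~\eqref{eq:inner:cond-change-X:exp}, \eqref{eq:inner:cond-change-Y3:exp} and~\eqref{eq:inner:cond-change-Y2:exp} show that, for $2\eps$-good histories, the one-step conditional means differ by $O_\eps(\log^4 n/n)$ from the smooth functions
\begin{align*}
 f_X(x,y_2,y_3) &= -\frac{12x-y_2}{12x-13y_2},\\
 f_{Y_3}(x,y_2,y_3) &= f_X(x,y_2,y_3)\cdot\frac{3y_3}{x},\\
 f_{Y_2}(x,y_2,y_3) &= \frac{3y_3}{2x} - \bigl(f_X(x,y_2,y_3)+1\bigr)\frac{13y_3}{8x} + f_X(x,y_2,y_3)\cdot\frac{2y_2}{x},
\end{align*}
each a rational function whose denominator $12x-13y_2$ is bounded away from zero on $\mathcal D_\eps$, hence Lipschitz there. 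Wormald's theorem then yields a unique smooth solution $(\tilde x,\tilde y_2,\tilde y_3)\colon[0,\tau]\to\close{\mathcal D_\eps}$ of the associated IVP with $\tilde x(0)=1$, $\tilde y_2(0)=0$, $\tilde y_3(0)=c$, together with whp uniform approximations $X(\lfloor sn\rfloor) = n\tilde x(s)+o(n)$ and similarly for $Y_2$ and $Y_3$, valid for $s\in[0,\tau]$, where $\tau$ is the first exit time of the trajectory from $\mathcal D_\eps$.

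To extract the form claimed in the statement, I would first observe that $d\tilde y_3/d\tilde x = 3\tilde y_3/\tilde x$, which together with $\tilde y_3(0)=c$ and $\tilde x(0)=1$ integrates to $\tilde y_3(s) = c\,\tilde x(s)^3$. Substituting $\tilde y_3 = c\tilde x^3$ into $f_{Y_2}$, forming $d\tilde y_2/d\tilde x$ by the chain rule, and simplifying using $f_X+1 = -12y_2/(12x-13y_2)$ yields exactly the ODE~\eqref{eq:3isat:ivp-dx:ode} with the initial condition $\tilde y_2(1)=0$; Lemma~\ref{lem:3isat:ivp-has-sol} then identifies $\tilde y_2\circ \tilde x^{-1}$ with the unique solution $y$ of the IVP. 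Strict monotonicity of $\tilde x$ is immediate from $f_X\le -1$ on $\mathcal D_\eps$. Since $\tilde y_3(s)=c\tilde x(s)^3\in(0,c)$ for $s\in(0,\tau]$, the trajectory can only exit $\mathcal D_\eps$ through the critical curve $13y_2=(1-3\eps)\cdot 12x$ or through $x=3\eps$, so in either case $\tilde x(\tau)=x_0$; the relation~\eqref{eq:outer:ode-dt:dx} is the ODE for $\tilde x$ after substituting $\tilde y_2(s)=y(\tilde x(s))$.

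The main subtlety I anticipate is the buffer between the $2\eps$-goodness condition and the $3\eps$-margin built into $\mathcal D_\eps$. This gap is what ensures that the whp $o(n)$-deviation of the discrete process from the continuous trajectory does not push $(X(t)/n,Y_2(t)/n,Y_3(t)/n)$ out of the $2\eps$-good region on which the trend and tail estimates of Lemmas~\ref{lem:inner:cond-change-X}--\ref{lem:inner:cond-change-Y2} are valid, so that Wormald's approximation persists right up to time~$\tau$ rather than degenerating in the interior of the $2\eps$-good region.
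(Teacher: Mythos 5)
Your proposal matches the paper's proof essentially step for step: it invokes Wormald's differential equation method on the Markov chain $(X,Y_2,Y_3)$, verifies the boundedness, trend, and Lipschitz hypotheses from Lemmas~\ref{lem:inner:cond-change-X}--\ref{lem:inner:cond-change-Y2}, integrates $dy_3/dx=3y_3/x$ to obtain $y_3=cx^3$, substitutes this into the drift of $y_2$ to recover the IVP~\eqref{eq:3isat:ivp-dx}, and uses Lemma~\ref{lem:3isat:ivp-has-sol} for existence of the solution on $\lt]0,1\rt]$. The only slight difference is cosmetic: the paper defines the Wormald domain as the $2\eps$-good region intersected with the bounded box $\lt]-\eps,c+\eps\rt[^{\,4}$ and then observes that the exit time is governed by the stricter $3\eps$ curve defining $x_0$, whereas you build the $3\eps$ margin into the domain $\mathcal{D}_\eps$ directly; both are fine, but since you define $\mathcal{D}_\eps$ only via lower bounds on $x$ and an inequality between $x$ and $y_2$, you should also add an upper bound such as $x<c+\eps$ to make $\mathcal{D}_\eps$ bounded, as Wormald's theorem requires.
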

\begin{proof}
  For the proof we use Wormald's well-known theorem, which requires some set up and computations.  Using the notation of Theorem~5.1 in \cite{Wormald-LecNot-99},
  let
  \begin{align*}
    D         &:= \bigl\{ (t,x,y_2,y_3)\in \lt]-\eps,c+\eps\rt[^{\,4} \bigm| \text{$(nx,ny_2,ny_3)$ is $2\eps$-good} \bigr\}\\
    C_0       &:= 10\\
    \beta     &:= \log^2 n \\
    \gamma    &:= 3n^{-2} \\
    \lambda_1 &:= \frac{\log^5 n}{n} \\
    \lambda   &:= \frac{\log^{\nfrac73} n}{n^{\nfrac13}}, \\
  \end{align*}
  Note that $\lambda > \lambda_1 + C_0 n\gamma$, and $\lambda = o(1)$, as required in Theorem~5.1 in \cite{Wormald-LecNot-99}.

  Obviously, we have $0 \le X, Y_2, Y_3 < C_0 n$.
  \begin{enumerate}[(i)]
  \item Equations~\eqref{eq:inner:cond-change-X:tail},~\eqref{eq:inner:cond-change-Y2:tail}, and~\eqref{eq:inner:cond-change-Y3:tail}, respectively, show that, if
    $(t/n,X(t)/n,Y_2(t)/n,Y_3(t)/n)\in D$, then, conditioned on $\Histt$, the probability that $X(t+1)-X(t) \le \beta$, $Y_2(t+1)-Y_2(t) \le \beta$, and
    $Y_3(t+1)-Y_3(t) \le \beta$ hold, is at least $1-\gamma$.
  \item The first parts of Lemmas~\ref{lem:inner:cond-change-X},~\ref{lem:inner:cond-change-Y2}, and~\ref{lem:inner:cond-change-Y3}, respectively, show that, if
    $(t,x,y_2,y_3) := (t/n,X(t)/n,Y_2(t)/n,Y_3(t)/n)\in D$,
    \begin{align*}
      \biggl| f(t,x,y_2,y_3)   - \Exp\bigl( X(t+1) - X(t) \bigm| \Histt \bigr) \biggr| &\le \lambda_1\\
      \biggl| g_2(t,x,y_2,y_3) - \Exp\bigl( Y_2(t+1) - Y_2(t) \bigm| \Histt \bigr) \biggr| &\le \lambda_1\\
      \biggl| g_3(t,x,y_2,y_3) - \Exp\bigl( Y_3(t+1) - Y_3(t) \bigm| \Histt \bigr) \biggr| &\le \lambda_1,
    \end{align*}
    where
    \begin{align*}
      f(t,x,y_2,y_3)   &:= -1 - \frac{ 12y_2(t) }{ 12x(t) - 13y_2(t) }\\
      g_2(t,x,y_2,y_3) &:= \frac{3 y_3(t)}{2 x(t)}  +  (-1-f(t,x,y_2,y_3))\frac{13 y_3(t)}{8 x(t)} + f(t,x,y_2,y_3) \frac{2 y_2(t)}{x(t)} \\
      g_3(t,x,y_2,y_3) &:= f(t,x,y_2,y_3) \frac{3y_3(t)}{x(t)}.
    \end{align*}
  \item There exists an $L$ depending on $\eps$ such that $f,g_2,g_3$ are $L$-lipschitz continuous on $D$.
  \end{enumerate}
  Let $x,y_2,y_3$ be the solution to the initial value problem
  \begin{subequations}\label{eq:outer:ivp-dt}
    \begin{align}
      \frac{dx}{dt}   &= f(t,x(t),y_2(t),y_3(t))                    \label{eq:outer:ivp-dt:dx}\\
      \frac{dy_2}{dt} &= g_2(t,x(t),y_2(t),y_3(t))                \label{eq:outer:ivp-dt:dy2}\\
      \frac{dy_3}{dt} &= g_3(t,x(t),y_2(t),y_3(t))                \label{eq:outer:ivp-dt:dy3}
    \end{align}
    \begin{align}
      x(0)          &= 1                       &     y_2(0)          &= 0                           &  y_3(0)          &= c.      
    \end{align}
  \end{subequations}
  From Wormald's theorem, we conclude that with probability
  \begin{equation*}
    1 - O\Bigl(  n\gamma \tfrac{\beta}{\lambda}e^{-n (\nfrac{\lambda}{\beta})^3}  \Bigr) = 1 - O(\tfrac1n),
  \end{equation*}
  it is true that, for all $t=0,\dots,\sigma n$, we have $X(t) = nx(t/n) + O(\lambda n)$, $Y_2(t) = ny_2(t/n) + O(\lambda n)$, and $Y_3(t) = ny_3(t/n) +
  O(\lambda n)$, where $\sigma=\sigma(n)$ is the supremum over all $s$ for which the solution to~\eqref{eq:outer:ivp-dt} can be extended before reaching within a
  distance of $C\lambda$ from the boundary of $D$, for a large constant $C$.
  
  We now need to study the initial value problem~\eqref{eq:outer:ivp-dt}.
  Let us start with the first equation~\eqref{eq:outer:ivp-dt:dx},  which we write as
  \begin{equation*}
    \frac{dx}{dt} = - \frac{ 12\,x - y_2 }{ 12\,x - 13\,y_2 },
  \end{equation*}
  which amounts to
  \begin{equation}\label{eq:outer:ode-sepsol-dt}
    -dt = \frac{ 12\,x - 13\,y_2 }{ 12\,x - y_2 }
    \,dx
    =  \Bigl(  1 - \frac{12\,y_2}{ 12\,x - y_2 } \Bigr)\,dx,
  \end{equation}
  The third inequality
  \begin{equation*}
    \frac{dy_3}{dt} = \frac{dx}{dt} \frac{ 3 y_3 }{x},
  \end{equation*}
  is equivalent to
  \begin{equation*}
    \frac{dy_3}{dx} = \frac{3 y_3}{x},
  \end{equation*}
  which immediately integrates to\footnote{%
    It should be noted that this is the same relationship between $x$ and $y_3$ as in the case of classical 3-SAT (see~\cite{Achlioptas01}).  %
  }%
  \begin{equation*}
    y_3 = c x^3,
  \end{equation*}
  where the constant before the $x^3$ is derived from the initial value conditions $y_3(0) = c$ and $x(0) = 1$.
  Finally, we write the second equation as
  \begin{equation*}
    \frac{dy_2}{dt} %
    = -\frac{y_3}{8 x} - \frac{dx}{dt} \; \frac{13 y_3}{8 x} + \frac{dx}{dt} \; \frac{2 y_2}{x}
    = -\frac{y_3}{8 x} - \frac{13}{8} cx^2\frac{dx}{dt} + \frac{2 y_2}{x}\frac{dx}{dt}
  \end{equation*}
  from which we obtain
  \begin{equation*}
    \frac{dy_2}{dx} =  -\frac{c}{8}x^2 \frac{dt}{dx} - \frac{13}{8} cx^2 + \frac{2 y_2}{x},
  \end{equation*}
  which, by~\eqref{eq:outer:ode-sepsol-dt}, yields
  \begin{equation*}
    \frac{dy_2}{dx}
    = \frac{c}{8}x^2 \frac{ 12\,x - 13\,y_2 }{ 12\,x - y_2 }   -   \frac{13}{8} cx^2   +   \frac{2 y_2}{x}
    = \frac{-18cx^4 + 2y_2(12x-y_2)}{x(12x - y_2)}
  \end{equation*}
  which is an ODE of the function $y_2$ in the variable $x$.  In fact, with $y_2(1)=0$, we recognize the IVP~\eqref{eq:3isat:ivp-dx}, and thus $y=y_2$ in the
  interval on which both are defined.

  To summarize, we have $y_3 = cx^3$, and $y_2=y$ as a function of $x$ is a solution to the IVP~\eqref{eq:3isat:ivp-dx}, and $x$ as a function of $t$ solves the
  ODE~\eqref{eq:outer:ivp-dt:dx} with boundary condition $x(0) = 1$.
  

  From Lemma~\ref{lem:3isat:ivp-has-sol}, we know that the solution $y$ to~\eqref{eq:3isat:ivp-dx} can be extended to a solution of the IVP defined on the full
  interval $\lt]0,1\rt]$.  Moreover, $\frac{dx}{dt} < 0$ whenever $13y(x) < 12x$, so the derivative of $x$ is strictly negative provided that $x \ge x_0$.  This
  implies that the solutions $x$, $y_2$, $y_3$ to~\eqref{eq:outer:ivp-dt} can be extended to the interval $[0,\tau]$, where $\tau$ is the unique number satisfying
  $x(\tau) = x_0$; in particular we have $\sigma < \tau$.
  
  This completes the proof of the lemma.
\end{proof}

We are now ready to prove Theorem~\ref{thm:algo-works-whp}.

\begin{proof}[Proof of Theorem~\ref{thm:algo-works-whp}]
  Lemma~\ref{lem:outer:params-stickto-IVP} gives the behavior of the parameters $X(t)$, $Y_2(t)$, and $Y_3(t)$ up to an error with high
  probability for all $t=0,\dots,\tau n$.  We need to check that
  \begin{enumerate}[(a)]
  \item the algorithm terminates before $t$ grows beyond $\tau n$,
  \item in this region of $t$, whp, the algorithm does not produce an empty clause.
  \end{enumerate}

  For~(a), we solve the IVP~\eqref{eq:3isat:ivp-dx} numerically for $c=2.3$.  The solution is drawn in Fig.~\ref{fig:outer:sol-c2.3}.  The figure also shows the
  line $13y=12x$.  For this value of $c$, we see that there is an $\eps>0$ such that the solution $y(x)$ to the IVP~\eqref{eq:3isat:ivp-dx} satisfies $13y(x) <
  12(1+2\eps)x$ for all $x > 2\eps$; w.l.o.g., we may assume that $\eps < \nfrac{1}{9}$.  Consequently, the $x_0$ from Lemma~\ref{lem:outer:params-stickto-IVP}
  equals $3\eps$.
  Algorithm~\ref{algo:3isat:outer} terminates as soon as $Y_2(t)+Y_3(t) \le c' X(t)$.  Thus, by Lemma~\ref{lem:outer:params-stickto-IVP}, we have an $s < \tau$ such
  that $x(s) = \nfrac13 > x_0$, and that, if we let $c' := \frac{50}{39}$, whp, for this $t := \lceil sn \rceil$
  \begin{multline*}
    Y_2(t) + Y_3(t)
    = ny(\nfrac13) + nc(\nfrac13)^3 +o(n)
    \\
    \leq n \left((1-2\eps)\tfrac{12}{13}\cdot\tfrac13 +
      \tfrac{c}{27}\right) + o(n)
    \le \tfrac{49}{39}\cdot \tfrac13 n
    \le c' X(t) -o(n),
  \end{multline*}
  if $n$ is large enough.
  Thus, the algorithm terminates before the parameters $X(\cdot)$, $Y_2(\cdot)$, $Y_3(\cdot)$ fail to be $2\eps$-good.

  It follows that Lemma~\ref{lem:inner:no-failure} gives a failure probability of~$o(\nfrac{1}{n})$ per iteration, so that the total failure probability is~$o(1)$.
  This proves~(b) and completes the proof of Theorem~\ref{thm:algo-works-whp}.
\end{proof}

\begin{figure}[htp]
  \centering
  \includegraphics[scale=.3333333333333333333333]{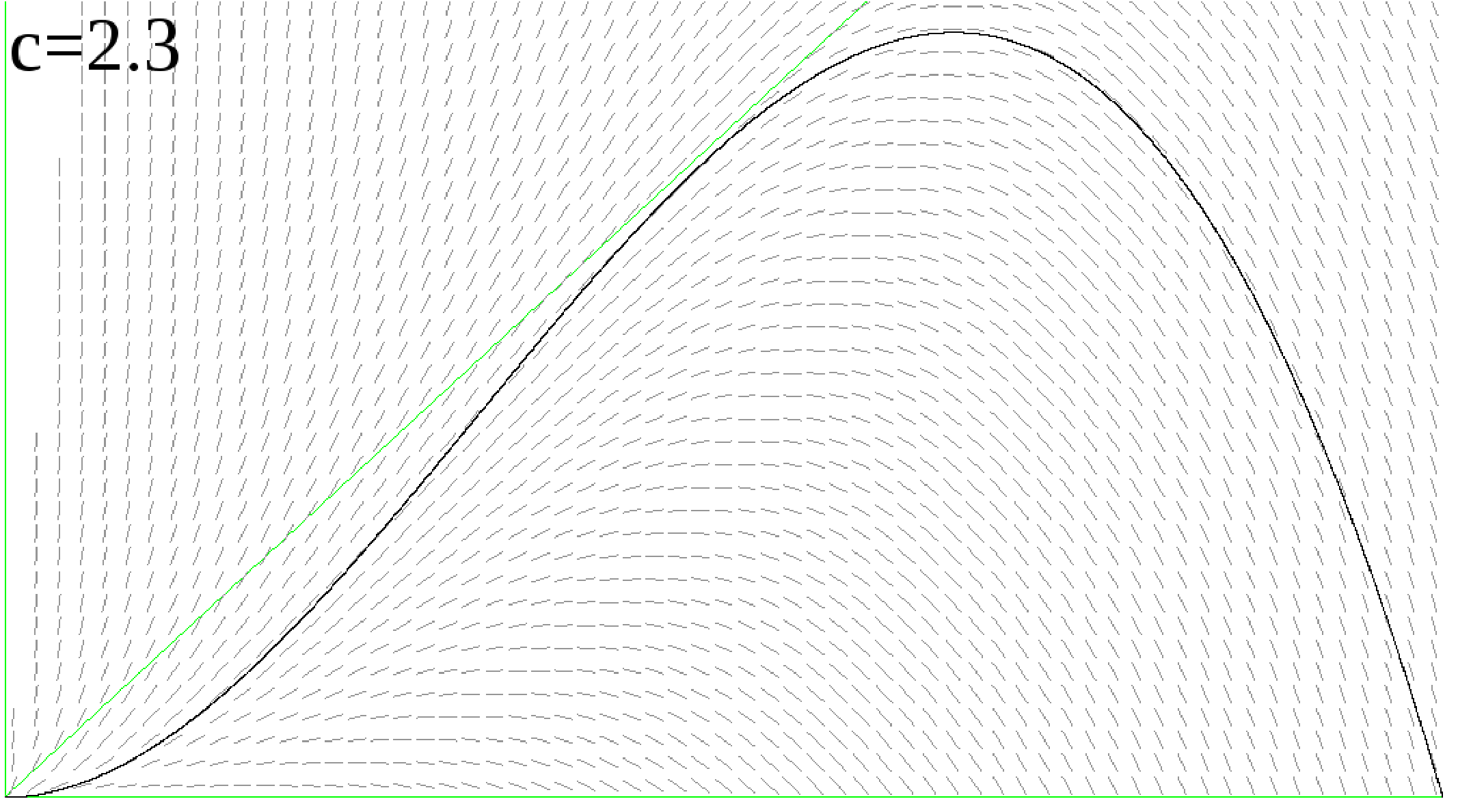}
  \caption{Solution of IVP~\eqref{eq:3isat:ivp-dx} with bounding curves}\label{fig:outer:sol-c2.3}
\end{figure}



\section{Conclusions and outlook}

The presented algorithm and its analysis provide a first systematic approach to random iSAT formulas.  In the course of the paper, analytical methods for dealing
with intervals in CNF formulas have been established, in particular as intervals imply correlation between the variables when choosing a value.  These tools will be
useful in the study of random algorithms for iSAT as well as in approximating a threshold in random 3-iSAT.

We have given an algorithm for $k$-iSAT, for $k=3$, which succeeds with high probability on instances for which $\nfrac{m}{n} \le \mbox{2.3}$.
It is conceptually easy to extend the algorithm and the analysis to general $k$ up to the point where the initial value problem has to be solved. For $k$-iSAT there
are $k-2$ ordinary differential equations to be numerically solved after the transformation in Lemma~\ref{lem:outer:params-stickto-IVP}, which makes it improbable
that a general formula for the maximal ratio can be derived.
Solving the system for small values of $k$, we obtain the results shown in Table~\ref{tab:other-k-values} (we always rounded down generously).

\begin{table}[htp]
  \centering
  \begin{tabular}{lrrrrrr}
    \toprule
    $k$                 & 3    & 4    & 5    & 6    & 7    & 8    \\
    max.~$\nfrac{m}{n}$  & 2.3 & 3.75 & 6.25 & 10.5 & 18.5 & 32.5 \\
    \bottomrule\\
  \end{tabular}
  \caption{Performance for different values of $k$}
  \label{tab:other-k-values}
\end{table}

It is possible to show that, whp, our algorithm fails to produce a satisfying interpretation if $\nfrac{m}{n} = c$ where $c$ is a constant for which the solution to
the IVP~\eqref{eq:3isat:ivp-dx} crosses the line $13y = 12 x$ (the green line in Fig~\ref{fig:outer:sol-c2.3}), e.g., $c=2.4$.  This is so because then the inner
loop runs for $\Omega(n)$ steps, and thus, whp, the algorithm \hurzes.  (However, such a result appears futile, given the very limited repair routine which we refer
to.)

\smallskip%
\paragraph{\bf Some further questions will be of interest.}
Firstly, the proposed algorithm can be improved in an obvious way: Whenever a variable is set, choose a value which is satisfies the maximum number of literals
containing the variable.  This, however, requires that the following question be answered.  Let $\lambda$ be a nonnegative real number.  Suppose that
$I_0,I_1,I_2,\dots$ are random invervals drawn independently uar from the sub-intervals of $[0,1]$, and $N$ is a Poisson random variable with mean~$\lambda$,
independent from the $I_j$.  What is the expectation $\xi(\lambda)$ of the following random variable?
\begin{equation*}
  \max\Bigl\{ \abs{K} \Bigm| K \subset \{1,\dots,N\}\text{, }  I_0\cap\bigcap_{j\in K} I_j \ne \emptyset \Bigr\}?
\end{equation*}

Secondly, a bound for the ratio above which random 3-iSAT formulas are wpp/whp not satisfiable might be interesting and worthwhile to be considered.

Thirdly, there might be a threshold for random 2-iSAT as for classical 2-SAT~\cite{ChvatalReed92,Goerdt94}.
It would be desirable to have computational indication of the existence of such a treshold.  (Such evidence
exists for the ``regular'' iSAT variant, where the endpoints of the constraint intervals must intersect
$\{0,1\}$.)  We conjecture that there is a threshold at $c = \nfrac32$ (the value from
Proposition~\ref{prop:satisf-of-2sat}).

More generally, it may be of interest whether the results of Friedgut (and Bourgain)~\cite{Friedgut99} (see
also \cite{Molloy02,Molloy03,CreignouDaude03,CreignouDaude04,CreignouDaude09}) can be applied to random
iSAT formulas to prove that a threshold (function) exists for $k$-iSAT for $k\ge 3$.

Fourthly, possibly, a stronger bound for 3-iSAT could be derived by adapting the algorithm of~\cite{KaporisKirousisLalas06} to the interval case.  This would pose
two problems: First we are interested in a whp result, which is not offered by the algorithm in~\cite{KaporisKirousisLalas06}, so a backtracking routine would have
to be devised; Secondly, the rule for the value assignment significantly complicates the computations for random intervals.  In their algorithm a randomly chosen
variable is assigned the value such that most clauses, in which it occurs, are satisfied, i.e., a variable is assigned a $1$ if it mostly occurs not negated, and $0$
otherwise.  For intervals this translates to assigning a value to a variable that is contained in the non-empty intersection of a maximal number of associated
intervals.  But the analysis of the probability of this maximal number turns out to be demanding for general intervals.

Finally, we would like to draw attention to the fact that several papers have raised questions concerning
the existence and location of a threshold for random regular
3-iSAT~\cite{BeckertHaehnleManya99,BejarManya99phase,BejarManyaCabiscolFernandezGomes07,ManyaBejarEscaladaimaz98}.

\bigskip\noindent%
{\bf We would like to close by thanking} %
the anonymous referees for their very valuable comments!



\providecommand{\bysame}{\leavevmode\hbox to3em{\hrulefill}\thinspace}
\providecommand{\MR}{\relax\ifhmode\unskip\space\fi MR }
\providecommand{\MRhref}[2]{%
  \href{http://www.ams.org/mathscinet-getitem?mr=#1}{#2}
}
\providecommand{\href}[2]{#2}

%


\appendix

\section{Deferred proofs}

\subsection{Computations for Lemma~\ref{lem:rint:moments-of-P}}\label{apx:rint:moments-of-P}

For (a), we compute
\begin{multline*}
  1-\Exp P 
  = \Exp(2X(1-X))
  = \int 2t(1-t) \,dF(t)
  \\
  = \int_{\lt[0,\nfrac12\rt[} 2t(1-t) \partial_t F(t) \,dt
  + 2t(1-t)\Big|_{t=\nfrac12} \cdot \tfrac12
  + \int_{\lt]\nfrac12,1\rt]} 2t(1-t) \partial_t F(t) \,dt
  \\
  = \int_0^{\nfrac12} 2t(1-t) 2 t \,dt
  + \frac14
  +  \int_{\nfrac12}^1 2t(1-t) 2(1-t) \,dt
  \\
  = \frac{5}{48}
  + \frac14
  + \frac{5}{48}
  = \frac{11}{24}.
\end{multline*}
For (b), we compute
\begin{multline*}
  \Exp(X^2(1-X)^2)
  = \int t^2(1-t)^2 \,dF(t)
  \\
  = t^2(1-t)^2\Big|_{t=\nfrac12} \cdot \tfrac12
  + \int_{\lt[0,\nfrac12\rt[} t^2(1-t)^2 \partial_t F(t) \,dt
  + \int_{\lt]\nfrac12,1\rt]} t^2(1-t)^2 \partial_t F(t) \,dt
  \\
  = \tfrac{1}{2^5}
  + \int_{\lt[0,\nfrac12\rt[} t^2(1-t)^2 2t \,dt
  + \int_{\lt]\nfrac12,1\rt]} t^2(1-t)^2 2(1-t) \,dt
  \\
  = \tfrac{1}{2^5}
  + 4 \int_0^{\nfrac12} t^3(1-t)^2 \,dt
  \\
  = \tfrac{1}{2^5}
  + 4 \lt( \tfrac14 t^4(1-t)^2\Big|_{t=0}^{\nfrac12} + \tfrac{1}{10} t^5(1-t)\Big|_{t=0}^{\nfrac12} + \tfrac{1}{60} t^6\Big|_{t=0}^{\nfrac12} \rt)
  \\
  = \tfrac{1}{2^5}
  + 4 ( \tfrac14 \tfrac{1}{2^6} + \tfrac{1}{10}\tfrac{1}{2^6} + \tfrac{1}{60}\tfrac{1}{2^6} )
  = \tfrac{1}{2^5}
  +  \tfrac{1}{2^6}( 1 + \tfrac{2}{5} + \tfrac{1}{15} )
  = \tfrac{1}{2^5} + \tfrac{22}{15\cdot 2^6}
  = \tfrac{1}{2^5} + \tfrac{11}{15\cdot 2^5}
  = \tfrac{15 + 11}{15\cdot 2^5}
  = \tfrac{13}{15\cdot 2^4}.
\end{multline*}
Hence, using (a), we obtain
\begin{equation*}
  \Exp(P^2)
  = 1 - 2(1-\Exp P) + 4\Exp X^2(1-X)^2
  = 1 - \tfrac{11}{12} + \tfrac{13}{60}
  = \tfrac{18}{60}
  = \tfrac{3}{10}.
\end{equation*}

\subsection{Proof of Lemma~\ref{lem:Q:qlength-properties}}\label{apx:Q:qlength-properties}

The proof is taken almost word for word from Grimmett \& Stirzaker~\cite{GrimmettStirzaker01}, Theorem~11.3.17, with some changes due to the discrete arrival- and
servicing points.
  
We say that the \textit{sons} of a customer Paul are those customers arriving in the time interval in which Paul is serviced.  Paul's \textit{family} consists of
himself and all of his descendants.

Fix a time interval $j$ in which the queue is not empty and denote by $X$ the size of the family of the customer served at that time interval.  We have the relation
\begin{equation*}
  X = 1 + \sum_{i=1}^{B(j+1)} X_i,
\end{equation*}
where $X_i$ denotes the family size of the $i$'th customer arriving in the time interval $j$.

The important observation now is that the family sizes are iid because the $B(j)$ are iid, and that the $X_i$ are independent of $B(j+1)$.
Consequently, for the common probability generating function $y$ of $X$ and the $X_i$, we have
\begin{equation}\label{eq:Q:textbook:ast}\tag{$*$}
  y(x) = x \ \gfun{B}(y(x)).
\end{equation}
The length of the first busy period coincides with the sum of the family sizes of the~$a$ customers
arriving in the first time interval.  Thus, we obtain
\begin{equation}\label{eq:Q:textbook:astast}\tag{$**$}
  h(x) = y(x)^a.
\end{equation}
Solving~\eqref{eq:Q:textbook:ast} for $x$ and inserting into~\eqref{eq:Q:textbook:astast}, we obtain
\begin{equation}\label{eq:Q:textbook:3ast}\tag{$*$$*$$*$}
  h\bigl( \tfrac{y(x)}{\gfun{B}(y(x))} \bigr) = y(x)^a.
\end{equation}
If $y(0) = 0$, then $B=0$, and thus $h(y) = y^a$, which coincides with equation~\eqref{eq:Q:probGenF}.
Otherwise, by~\eqref{eq:Q:textbook:3ast}, equation~\eqref{eq:Q:probGenF} holds for all $y$ in the interval $[y(0),y(1)]$, and thus for all $y$ for which the power
series on both sides of the equality sign converge.
  
We derive the statement about the mean length of the first busy period by differentiating~\eqref{eq:Q:probGenF}, and possibly invoking Abel's Theorem to evaluate
the power series at the point $1$.
  
Finally, the statement about the tail probability follows directly from the standard exponential moment argument: If $y\ge \gfun{B}(y) > 0$, then, with $x :=
\nfrac{y}{\gfun{B}(y)} \ge 1$, we have
\begin{equation*}
  \Prb[ Z \ge \alpha ]
  = \Prb[ x^Z \ge x^\alpha ]
  \le \frac{\Exp x^Z}{x^\alpha}
  = \frac{h(x)}{x^\alpha}
  = \frac{y^a}{(\nfrac{y}{\gfun{B}(y)})^\alpha}
  = \frac{\gfun{B}(y)^\alpha}{y^{\alpha-a}},
\end{equation*}
as claimed.

\subsection{Computations for Lemma~\ref{lem:Q:momgen-estimate}}\label{apx:Q:iPoi:busy-prd-tail}

Computations regarding equation~\eqref{eq:innerloop:exponent:osudfbo832hrlsdin}:
\begin{gather*}
  \alpha r + \tfrac{12^3}{13^2\cdot 5} \alpha r^2 u - (\alpha-a)\frac{1}{u+1} = 0\\
  \alpha r(u+1) + \tfrac{12^3}{13^2\cdot 5} \alpha r^2  u(u+1) - (\alpha-a) = 0\\
  \Bigl( \tfrac{12^3}{13^2\cdot 5} \alpha r^2  \Bigr)u^2 + \Bigl(  \alpha r  + \tfrac{12^3}{13^2\cdot 5} \alpha r^2  \Bigr)u - \bigl( (1-r)\alpha - a \bigr) = 0\\
  u = - \frac{\Bigl(  \alpha r  + \tfrac{12^3}{13^2\cdot 5} \alpha r^2  \Bigr)
    \pm\sqrt{ \Bigl(  \alpha r  + \tfrac{12^3}{13^2\cdot 5} \alpha r^2  \Bigr)^2
      + 4\bigl( (1-r)\alpha - a \bigr)\Bigl( \tfrac{12^3}{13^2\cdot 5} \alpha r^2  \Bigr)
    }
  }{2\cdot \Bigl( \tfrac{12^3}{13^2\cdot 5} \alpha r^2  \Bigr) }\\
\end{gather*}
We need to be close to $0$, so we take the ``$\pm$'' $=$ ``$+$'':
\begin{gather*}
  u_r
  := \frac{- \Bigl( \alpha r  + \tfrac{12^3}{13^2\cdot 5} \alpha r^2  \Bigr)
    +\sqrt{ \Bigl(  \alpha r + \tfrac{12^3}{13^2\cdot 5} \alpha r^2  \Bigr)^2
      + 4\bigl( (1-r)\alpha - a \bigr) \tfrac{12^3}{13^2\cdot 5} \alpha r^2
    }
  }{2\cdot \tfrac{12^3}{13^2\cdot 5} \alpha r^2 }\\
  =
  \frac{- \Bigl( 1  + \tfrac{12^3}{13^2\cdot 5} r  \Bigr)
    +\sqrt{ \Bigl(1 + \tfrac{12^3}{13^2\cdot 5} r  \Bigr)^2
      + 4\bigl(1 - r - \nfrac{a}{\alpha} \bigr) \tfrac{12^3}{13^2\cdot 5}
    }
  }{2\cdot \tfrac{12^3}{13^2\cdot 5} r }\\
  =
  \frac{- \Bigl(  1  + \tfrac{12^3}{13^2\cdot 5} r  \Bigr)
    +\sqrt{ \Bigl(  1  + \tfrac{12^3}{13^2\cdot 5} r  \Bigr)^2
      -4r \tfrac{12^3}{13^2\cdot 5}
      + 4(1-\nfrac{a}{\alpha})\tfrac{12^3}{13^2\cdot 5}
    }
  }{2\cdot \tfrac{12^3}{13^2\cdot 5} r }\\
  =
  \frac{- \Bigl(1  + \tfrac{12^3}{13^2\cdot 5} r  \Bigr)
    +\sqrt{ \Bigl(  1  - \tfrac{12^3}{13^2\cdot 5} r  \Bigr)^2
      + 4(1-\nfrac{a}{\alpha})\tfrac{12^3}{13^2\cdot 5}
    }
  }{2\cdot \tfrac{12^3}{13^2\cdot 5} r }\\
  =
  \frac{- \Bigl(  1  + \tfrac{12^3}{5\cdot 13^2} r  \Bigr)
    +\sqrt{ \Bigl(  1  - \tfrac{12^3}{5\cdot 13^2} r  \Bigr)^2
      + \tfrac{4\cdot 12^3}{5\cdot 13^2}     -\tfrac{4\cdot 12^3}{5\cdot 13^2}\cdot\frac{a}{\alpha}
    }
  }{\tfrac{2\cdot 12^3}{5\cdot 13^2} r }\\
  =
  \frac{- \Bigl( 1  + \tfrac{12^3}{5\cdot 13^2} r  \Bigr)
    +\sqrt{ \Bigl(1  - \tfrac{12^3}{5\cdot 13^2} r  \Bigr)^2
      + \tfrac{4\cdot 12^3}{5\cdot 13^2}
    }
  }{\tfrac{2\cdot 12^3}{5\cdot 13^2} r }
  - O(\nfrac{a}{\alpha}),
\end{gather*}
with an absolute constant in the $O(\cdot)$, because $a\le\alpha$ and $\nfrac12 \le r \le 1$.

Computation regarding equation~\eqref{eq:Q:iPoi:pre-tail}:
\begin{gather*}
  \frac{\fbox{($*$)}(u_r)}{\alpha}
  = \frac{\lt. \alpha r u + \tfrac{12^2\cdot 3 \cdot 2}{13^2\cdot 5} \alpha r^2 u^2 - (\alpha-a)\log(u+1) \rt|_{u:=u_r}}{\alpha}
  \\
  = r u_r  + \tfrac{6\cdot 12^2}{5\cdot 13^2}  r^2 u_r^2 - (1-\nfrac{a}{\alpha})\log(u_r+1)
  \\
  = ru_r + \tfrac{6\cdot 12^2}{5\cdot 13^2}  r^2 u_r^2 - \log(u_r+1)   + O(\nfrac{a}{\alpha}),
\end{gather*}
with an absolute constant in the $O(\cdot)$, because $u_r+1 \le 2$.

\subsection{Proof of Lemma~\ref{lem:Q:final}}\label{apx:Q:final}

  Suppose that the $B(j)$ are represented as a sum as in~\eqref{eq:Q:dBin-coupling} above, and define
  \begin{equation*}
    B^\pm(j) := \sum_{j=1}^{m^\pm} \Ind\Bigl[   U(j,i) \le \tfrac{P(j)}{n-(\pm z)}   \Bigr].
  \end{equation*}
  Then the $B^+(j)$, $j=1,2,3,\dots$, are iid, so that Lemma~\ref{lem:Q:binomial:mean-and-tail} is applicable.  The same is true for the $B^-(j)$, $j=1,2,3,\dots$.
  We clearly have, with probability $1-O(n^{-r})$,
  \begin{equation*}
    B^-(j) \le B(j) \le B^+(j) \qquad\text{ for all $j=1,\dots,z$}.
  \end{equation*}
  Defining two queues $Q^\pm(j)$ based on the $B^\pm(j)$ and respective lengths of first busy periods $Z^\pm$, we obtain, with probability $1-O(n^{-r})$
  \begin{equation}\label{eq:z-_le_z_le_z+}\tag{$*$}
    Z^- \le Z \le Z^+,
  \end{equation}
  where we have also used that $Z^\pm \le z$ with probability $1-O(n^{-r})$ (Lemma~\ref{lem:Q:binomial:mean-and-tail}).
  
  Denote by $E$ the event that~\eqref{eq:z-_le_z_le_z+} holds.  If~\eqref{eq:z-_le_z_le_z+} does not hold, we still have $Z = O(n)$
  by~\eqref{eq:Q:ub-total-no-customers}, so that we obtain
  \begin{equation*}
    \Exp Z
    = \Exp(Z\mid E)\Prb(E) + \Exp(Z\mid \widebar E)\Prb(\widebar E) 
    \le \Exp(Z^+\mid E)\Prb(E) + O(n^{1-r})
    \le \Exp(Z^+) + O(n^{1-r}).
  \end{equation*}
  For the lower bound, we similarly have
  \begin{equation*}
    \Exp Z
    \ge \Exp( \Ind(E) Z^- )
    = \Exp(Z^-) - \Exp( \Ind(\widebar E) Z^- )
  \end{equation*}
  Clearly, $\Exp( \Ind(\widebar E) Z^- ) \le z\Prb(\widebar E) + \Exp(\Ind(\widebar E) Z^- \Ind[Z^- > z]) = z\Prb(\widebar E) + mO(n^{-r}) = O(n^{1-r})$
  Thus we conclude that $\Exp Z \ge \Exp Z^- - O(n^{1-r})$.
  
  For the tail estimate, we use $Z^+$:
  \begin{multline*}
    \Prb[ Z \ge \alpha ]
    \le \Prb[ Z \ge \alpha \aand Z \le Z^+] + \Prb[ Z \ge \alpha \aand Z > Z^+]
    \\
    \le \Prb[ Z^+ \ge \alpha ] + \Prb[Z > Z^+]
    \le e^{-\delta\alpha} + O(n^{-r})
  \end{multline*}
  by Lemma~\ref{lem:Q:binomial:mean-and-tail}.

\subsection{Computations for Lemma~\ref{lem:3isat:ivp-has-sol}}\label{apdx:3isat:ivp-has-sol}

\subsubsection*{\it For the proof of Claim~(\ref{enum:3isat:ivp-has-sol:left}).}

Let $g(x,y) := \frac{-18cx^4 + 2y(12x-y)}{x(12x - y)}$, the right hand side of the ODE~\eqref{eq:3isat:ivp-dx:ode}.  As mentioned in the proof of the lemma, we
show $g(x,y(x)) > 6 = \frac{dy}{dx}$, for $0<x\le\nfrac45$.
We compute
\begin{multline*}
  g(x,y(x))
  = \frac{-18cx^4 + 2\cdot 6x (12x-6 x)}{x(12x - 6x)}
  = \frac{-18cx^2 + 2\cdot 6 (12-6)}{(12 - 6)}
  = \frac{-18cx^2 + 72}{6}
  \\
  = -3cx^2 + 12
  \gecmt{c\le 3} 
  -9x^2 + 12
  \ge -9(\nfrac45)^2 + 12
  = 12-\frac{9\cdot 16}{25}
  = \frac{25\cdot 12 - 9\cdot 16}{25}
  \\
  = \frac{12(25 - 3\cdot 4)}{25}
  = \frac{12\cdot 13}{25}  > 6.
\end{multline*}

\subsubsection*{\it For the proof of Claim~(\ref{enum:3isat:ivp-has-sol:right}).}

Let $g(x,y)$ as above.  As mentioned in the proof of the lemma, we show $g(x,y(x)) > -5 = \frac{dy}{dx}$, for $\nfrac45 \le x \le 1$.
To show that 
\begin{equation*}
  g(x,y(x)) = \frac{-18cx^4 + 2\cdot 5(1-x)(12x-5(1-x))}{x(12x - 5(1-x))} > -5,
\end{equation*}
we compute
\begin{multline*}
  -18cx^4 + 2\cdot 5(1-x)(12x-5(1-x)) + 5x(12x - 5(1-x))
  \\
  = -18cx^4 + 10(1-x)(17x-5) + 5x(17x - 5)
  = -18cx^4 + (10-5x)(17x-5)
  \\
  = -18cx^4 -85x^2 +195x -50
  \gecmt{c\le3}
  -54x^4 -85x^2 +195x -50.
\end{multline*}
The derivative $-216 x^3 -170x +195$ of the last polynomial is strictly decreasing, and evalutating it at $\nfrac45$ gives
$-216 (\nfrac45)^3 -170\cdot \nfrac45 +195 \approx -51.592 < 0$.  Thus, it suffices to check the inequality
$-54x^4 -85x^2 +195x -50 > 0$ for $x=1$: $-54 -85 +195 -50 = 6 > 0$.



\begin{thebibliography}{10}

\bibitem{Achlioptas00}
Dimitris Achlioptas, \emph{Setting 2 variables at a time yields a new lower
  bound for random 3-{SAT} (extended abstract)}, Proceedings of the
  {T}hirty-{S}econd {A}nnual {ACM} {S}ymposium on {T}heory of {C}omputing (New
  York), ACM, 2000, pp.~28--37 (electronic). \MR{2114514}

\bibitem{Achlioptas01}
\bysame, \emph{Lower bounds for random 3-{SAT} via differential equations},
  Theoret. Comput. Sci. \textbf{265} (2001), no.~1-2, 159--185, Phase
  transitions in combinatorial problems (Trieste, 1999). \MR{1848217
  (2002h:68077)}

\bibitem{AchlioptasPeres2004}
Dimitris Achlioptas and Yuval Peres, \emph{The threshold for random {$k$}-{SAT}
  is {$2^k\log 2-O(k)$}}, J. Amer. Math. Soc. \textbf{17} (2004), no.~4,
  947--973 (electronic). \MR{2083472 (2005g:68137)}

\bibitem{AchlioptasSorkin00}
Dimitris Achlioptas and Gregory~B. Sorkin, \emph{Optimal myopic algorithms for
  random 3-{SAT}}, 41st {A}nnual {S}ymposium on {F}oundations of {C}omputer
  {S}cience ({R}edondo {B}each, {CA}, 2000), IEEE Comput. Soc. Press, Los
  Alamitos, CA, 2000, pp.~590--600. \MR{1931856}

\bibitem{AnsoteguiManya03}
C.~Ans{\'o}tegui and F.~Many{\`a}, \emph{New logical and complexity results for
  signed-sat}, Proceedings of the 33rd International Symposium on
  Multiple-Valued Logic, 2003, pp.~181--187.

\bibitem{AnsoteguiBejarCabiscolManya04}
Carlos Ansotegui, Ramon B{\'e}jar, Alba Cabiscol, and Felip Many{\`a},
  \emph{The interface between p and np in signed cnf formulas}, Proceedings.
  34th International Symposium on Multiple-Valued Logic, 2004, pp.~251--256.

\bibitem{AspvallPlassTarjan79}
Bengt Aspvall, Michael~F. Plass, and Robert~Endre Tarjan, \emph{A linear-time
  algorithm for testing the truth of certain quantified {B}oolean formulas},
  Inform. Process. Lett. \textbf{8} (1979), no.~3, 121--123. \MR{526451
  (80b:68050)}

\bibitem{KBallersteinPhD}
Kathrin Ballerstein, \emph{Logical interaction networks in biology: {T}heory
  and application}, Ph.D. thesis, Eidgen{\"o}ssische Technische Hochschule
  Z{\"u}rich (ETHZ), 2012.

\bibitem{BeckertHahnleManya00}
Bernhard Beckert, Reiner H{\"a}hnle, and Felip Many{\`a}, \emph{The 2-{SAT}
  problem of regular signed {CNF} formulas}, 30th {IEEE} {I}nternational
  {S}ymposium on {M}ultiple-{V}alued {L}ogic ({ISMVL} 2000) ({P}ortland, {OR}),
  IEEE Computer Soc., Los Alamitos, CA, 2000, pp.~331--336. \MR{MR1896571}

\bibitem{BeckertHaehnleManya99}
Bernhard Beckert, Reiner H{\"a}hnle, and Felip Manya, \emph{The {SAT} problem
  of signed {CNF} formulas}, Labelled Deduction (M.~D'Agostino, D.~Basin,
  D.~Gabbay, S.~Matthews, and L.~Vigano, eds.), Kluwer, Dordrecht, 2000,
  pp.~59--80.

\bibitem{BejafCabiscolFernandezManyaGomes}
Ramon B{\'e}jar, Alba Cabiscol, C{\`e}sar Fern{\'a}ndez, Felip Man{\`a}, and
  Carla~{P.} Gomes, \emph{Capturing structure with satisfiability}, Proceedings
  of CP-2001, LNCS, vol. 2239, Springer, 2001, pp.~137--152.

\bibitem{BejarManya99phase}
Ramon B{\'e}jar and Felip Many{\`a}, \emph{Phase transitions in the regular
  random 3-{SAT} problem.}, ISMIS'99, 1999, pp.~M292--300.

\bibitem{BejarManya99}
\bysame, \emph{Solving combinatorial problems with regular local search
  algorithms}, Proceedings of the 6th International Conference on Logic for
  Programming and Automated Reasoning (LPAR), LNCS, vol. 1705, Springer, 1999,
  pp.~33--43.

\bibitem{BejarManyaCabiscolFernandezGomes07}
Ram{\'o}n B{\'e}jar, Felip Many{\`a}, Alba Cabiscol, C{\`e}sar Fern{\'a}ndez,
  and Carla Gomes, \emph{Regular-{SAT}: a many-valued approach to solving
  combinatorial problems}, Discrete Appl. Math. \textbf{155} (2007), no.~12,
  1613--1626. \MR{2341632 (2008f:68135)}

\bibitem{britz04}
Katarina Britz, \emph{Interval satisfiability}, Proceedings of the 2004 annual
  research conference of the South African institute of computer scientists and
  information technologists on IT research in developing countries (Republic of
  South Africa), SAICSIT '04, South African Institute for Computer Scientists
  and Information Technologists, 2004, pp.~244--248.

\bibitem{ChaoFranco86}
Ming-Te Chao and John Franco, \emph{Probabilistic analysis of two heuristics
  for the {$3$}-satisfiability problem}, SIAM J. Comput. \textbf{15} (1986),
  no.~4, 1106--1118. \MR{861375 (88b:68079)}

\bibitem{ChaoFranco90}
\bysame, \emph{Probabilistic analysis of a generalization of the unit-clause
  literal selection heuristics for the {$k$}-satisfiability problem}, Inform.
  Sci. \textbf{51} (1990), no.~3, 289--314. \MR{1072035 (91g:68076)}

\bibitem{ChepoiCreignouHermannSalzer10}
Victor Chepoi, Nadia Creignou, Miki Hermann, and Gernot Salzer, \emph{The
  {H}elly property and satisfiability of {B}oolean formulas defined on set
  families}, European J. Combin. \textbf{31} (2010), no.~2, 502--516.
  \MR{2565343 (2011c:68070)}

\bibitem{ChvatalReed92}
Va{\v{s}}ek Chv{\'a}tal and Bruce Reed, \emph{Mick gets some (the odds are on
  his side)}, FOCS, 1992, pp.~620--627.

\bibitem{CojaOghlan10}
Amin Coja-Oghlan, \emph{A better algorithm for random {$k$}-{SAT}}, SIAM J.
  Comput. \textbf{39} (2010), no.~7, 2823--2864. \MR{2645890}

\bibitem{CreignouDaude03}
Nadia Creignou and Herv{\'e} Daud{\'e}, \emph{Generalized satisfiability
  problems: minimal elements and phase transitions}, Theoret. Comput. Sci.
  \textbf{302} (2003), no.~1-3, 417--430. \MR{1981958 (2004d:68055)}

\bibitem{CreignouDaude04}
\bysame, \emph{Combinatorial sharpness criterion and phase transition
  classification for random {CSP}s}, Inform. and Comput. \textbf{190} (2004),
  no.~2, 220--238. \MR{2049708 (2005a:82033)}

\bibitem{CreignouDaude09}
\bysame, \emph{The {SAT}-{UNSAT} transition for random constraint satisfaction
  problems}, Discrete Math. \textbf{309} (2009), no.~8, 2085--2099. \MR{2510334
  (2010d:68062)}

\bibitem{downward:2001}
J.~Downward, \emph{The ins and outs of signalling}, Nature \textbf{411} (2001),
  no.~6839, 759--762.

\bibitem{EscaladaimasManya94}
Gonzalo Escalada-Imaz and Felip Many{\`a}, \emph{The satisfiability problem for
  multiple-valued horn formul{\ae}}, Proceedings of the IEEE International
  Symposium on Multiple-Valued Logics, 1994, pp.~250--256.

\bibitem{Friedgut99}
Ehud Friedgut, \emph{Sharp thresholds of graph properties, and the {$k$}-sat
  problem}, J. Amer. Math. Soc. \textbf{12} (1999), no.~4, 1017--1054, With an
  appendix by Jean Bourgain. \MR{1678031 (2000a:05183)}

\bibitem{FriezeSuen96}
Alan Frieze and Stephen Suen, \emph{Analysis of two simple heuristics on a
  random instance of {$k$}-{SAT}}, J. Algorithms \textbf{20} (1996), no.~2,
  312--355. \MR{1379227 (97c:68062)}

\bibitem{FrischPeugniez01}
{A.M.} Frisch and {T.J.} Peugniez, \emph{Solving bon-boolean satisfiability
  problems with stochastic local search}, Proceedings of IJCAI-2001, 2001,
  pp.~282--288.

\bibitem{Goerdt94}
Andreas Goerdt, \emph{A threshold for unsatisfiability}, J. Comput. System Sci.
  \textbf{53} (1996), no.~3, 469--486, 1994 ACM Symposium on Parallel
  Algorithms and Architectures (Cape May, NJ, 1994). \MR{1423858 (98i:03012)}

\bibitem{GrimmettStirzaker01}
Geoffrey~R. Grimmett and David~R. Stirzaker, \emph{Probability and random
  processes}, third ed., Oxford University Press, New York, 2001. \MR{2059709
  (2004m:60002)}

\bibitem{haehnle:91}
Reiner H\"ahnle, \emph{Towards an efficient tableau proof procedure for
  multiple-valued logics}, Lecture Notes in Computer Science \textbf{533/1991}
  (1991), 248--260, DOI: 10.1007/3-540-54487-9\_62.

\bibitem{HajiaghayiSorkin03}
MohammadTaghi Hajiaghayi and Gregory~B. Sorkin, \emph{The satisfiability
  threshold of random 3-{SAT} is at least 3.52}, Tech. Report
  MIT-CSAIL-TR-2003-029, Massachusetts Institute of Technology, 2003.

\bibitem{haus-niermann-truemper-weismantel:2009}
Utz-Uwe Haus, Kathrin Niermann, Klaus Truemper, and Robert Weismantel,
  \emph{Logic integer programming models for signaling networks}, Journal of
  Computational Biology \textbf{16} (2009), no.~5, 725--743.

\bibitem{JansonLuczakRucinskiBk}
Svante Janson, Tomasz {\L}uczak, and Andrzej Rucinski, \emph{Random graphs},
  Wiley-Interscience Series in Discrete Mathematics and Optimization,
  Wiley-Interscience, New York, 2000. \MR{1782847 (2001k:05180)}

\bibitem{JusticzScheinermanWinkler90}
Joyce Justicz, Edward~R. Scheinerman, and Peter~M. Winkler, \emph{Random
  intervals}, Amer. Math. Monthly \textbf{97} (1990), no.~10, 881--889.
  \MR{MR1079974 (91m:60023)}

\bibitem{KaporisKirousisLalas06}
Alexis~C. Kaporis, Lefteris~M. Kirousis, and Efthimios Lalas, \emph{The
  probabilistic analysis of a greedy satisfiability algorithm}, Random
  Structures and Algorithms \textbf{28} (2006), no.~4, 444--480.

\bibitem{kauffman:69}
Stuart~A. Kauffman, \emph{Metabolic stability and epigenesis in randomly
  constructed genetic nets}, J. Theor. Biol. \textbf{22} (1969), no.~3,
  437--467.

\bibitem{klamt-saezr-lindquist-simeoni-gilles:2006}
Steffen Klamt, Julio Saez-Rodriguez, Jonathan~A. Lindquist, Luca Simeoni, and
  Ernst~Dieter Gilles, \emph{A methodology for the structural and functional
  analysis of signaling and regulatory networks.}, BMC Bioinformatics
  \textbf{7} (2006), no.~56, available from
  \url{http://www.biomedcentral.com/1471-2105/7/56}.

\bibitem{lukasiewicz:20}
Jan \L~ukasiewicz, \emph{O logice trójwartościowej}, Ruch Filozoficzny
  (1920), 169--171, In polish.

\bibitem{Manya00}
Felip Many{\`a}, \emph{The 2-{SAT} problem in signed {CNF} formulas},
  Multiple-Valued Logic \textbf{5} (2000), no.~4, 307--325.

\bibitem{ManyaBejarEscaladaimaz98}
Felip Many{\`a}, Ramon B{\'e}jar, and Gonzalo Escalada-Imaz, \emph{The
  satisfiability problem in regular {CNF}-formulas}, Soft Computing: A Fusion
  of Foundations, Methodologies and Applications \textbf{2} (1998), no.~3,
  116--123.

\bibitem{Molloy02}
Michael Molloy, \emph{Models and thresholds for random constraint satisfaction
  problems}, Proceedings of the {T}hirty-{F}ourth {A}nnual {ACM} {S}ymposium on
  {T}heory of {C}omputing (New York), ACM, 2002, pp.~209--217 (electronic).
  \MR{2121144}

\bibitem{Molloy03}
\bysame, \emph{Models for random constraint satisfaction problems}, SIAM J.
  Comput. \textbf{32} (2003), no.~4, 935--949 (electronic). \MR{2001891
  (2004i:68197)}

\bibitem{Scheinerman88}
E.~R. Scheinerman, \emph{Random interval graphs}, Combinatorica \textbf{8}
  (1988), no.~4, 357--371. \MR{MR981893 (90a:05171)}

\bibitem{Wormald95}
Nicholas~C. Wormald, \emph{Differential equations for random processes and
  random graphs}, Ann. Appl. Probab. \textbf{5} (1995), no.~4, 1217--1235.
  \MR{MR1384372 (97c:05139)}

\bibitem{Wormald-LecNot-99}
\bysame, \emph{The differential equation method for random graph processes and
  greedy algorithms}, Lectures on Approximation and Randomized Algorithms
  (M.~Karonski and H.~J. Proemel, eds.), PWN, Warsaw, 1999, pp.~73--155.

\end{thebibliography}
\end{document}